\documentclass[a4paper,11pt]{article}
\usepackage{authblk}
\usepackage{subfiles}
\usepackage{amsmath,amssymb,amsthm}
\usepackage[left=3cm, right=3cm, top=1in, bottom=1in]{geometry}
\usepackage{framed}
\usepackage{mdframed}
\usepackage{graphicx}
\usepackage{bm}
\usepackage{theoremref}
\usepackage{enumitem}
\usepackage{fancyhdr}
\usepackage{color}
\usepackage{float}
\usepackage{dsfont}
\usepackage{mathtools}
\usepackage{breqn}
\usepackage{bbm}
\usepackage{mathrsfs}
\usepackage{amstext}
\usepackage{hyperref}
\usepackage{accents}
\usepackage{tikz}
\usepackage{caption}
\usepackage{subcaption}
\usepackage{setspace}
\usepackage{pdfsync}
\usepackage{mathrsfs}
\usepackage{verbatim}
\usepackage[utf8]{inputenc}
\usepackage[english]{babel}
\usepackage{parskip}
\usepackage[titletoc]{appendix}

\newcommand{\notinsubfile}[1]{}

\DeclarePairedDelimiter\floor{\lfloor}{\rfloor}

\hypersetup{
	colorlinks   = true,
	citecolor    = blue,
	linkcolor    = blue
}

\parindent 0pt

\numberwithin{equation}{section}
\newtheorem{theorem}{Theorem}[section]
\newtheorem{corollary}[theorem]{Corollary}
\newtheorem{lemma}[theorem]{Lemma}
\newtheorem{proposition}[theorem]{Proposition}

\theoremstyle{remark}
\newtheorem{definition}[theorem]{Definition}
\newtheorem{remark}[theorem]{Remark}

\let\oldproofname=\proofname
\renewcommand{\proofname}{\rm\bf{\oldproofname}}

%%%Mathcal Letters

\newcommand{\mcB}{\mathcal{B}}
\newcommand{\mcC}{\mathcal{C}}
\newcommand{\mcD}{\mathcal{D}}
\newcommand{\mcE}{\mathcal{E}}
\newcommand{\mcF}{\mathcal{F}}

\newcommand{\mcH}{\mathcal{H}}
\newcommand{\mcI}{\mathcal{I}}

\newcommand{\mcL}{\mathcal{L}}
\newcommand{\mcM}{\mathcal{M}}
\newcommand{\mcN}{\mathcal{N}}
\newcommand{\mcO}{\mathcal{O}}
\newcommand{\mcP}{\mathcal{P}}

\newcommand{\mcS}{\mathcal{S}}
\newcommand{\mcT}{\mathcal{T}}

%%%%% Mathbold Letters
\newcommand{\indic}{\mathds{1}}

\newcommand{\mbC}{\mathbb{C}}
\newcommand{\C}{\mathbb{C}}

\newcommand{\mbE}{\mathbb{E}}

\newcommand{\mbN}{\mathbb{N}}

\newcommand{\mbP}{\mathbb{P}}

\newcommand{\mbR}{\mathbb{R}}

\newcommand{\mbT}{\mathbb{T}}
\newcommand{\T}{\mathbb{T}}

\newcommand{\mbZ}{\mathbb{Z}}

%%%%%% Bold Letters

%%%%%% Upper case Gothic

\newcommand{\mfR}{\mathfrak{R}}

%%%%%% Lower case Gothic

\newcommand{\mfm}{\mathfrak{m}}

%%%%%%% Capital math script

\newcommand{\msD}{\mathscr{D}}

\newcommand{\msH}{\mathscr{H}}

\newcommand{\msS}{\mathscr{S}}

%%%%% Math Operators

\newcommand{\supp}{\mathrm{supp}}
\newcommand{\eps}{\varepsilon}
\newcommand{\dd}{\mathop{}\!\mathrm{d}}
\newcommand{\id}{\mathrm{id}}

\newcommand{\tzero}{|_{t=0}}
\newcommand{\bigtzero}{\big|_{t=0}}

%%%%Norms
\newcommand{\vertiii}[1]{{\left\vert\kern-0.25ex\left\vert\kern-0.25ex\left\vert #1 
		\right\vert\kern-0.25ex\right\vert\kern-0.25ex\right\vert}}

\newcommand{\TV}{\mathrm{TV}}

%%%Misc

\newlength{\myleftlen}

%%%ToC formating

\usepackage{setspace}
\makeatletter
\def\@tocline#1#2#3#4#5#6#7{\relax
  \ifnum #1>\c@tocdepth % then omit
  \else
    \par \addpenalty\@secpenalty\addvspace{#2}%
    \begingroup \hyphenpenalty\@M
    \@ifempty{#4}{%
      \@tempdima\csname r@tocindent\number#1\endcsname\relax
    }{%
      \@tempdima#4\relax
    }%
    \parindent\z@ \leftskip#3\relax \advance\leftskip\@tempdima\relax
    \rightskip\@pnumwidth plus4em \parfillskip-\@pnumwidth
    #5\leavevmode\hskip-\@tempdima
      \ifcase #1
       \or\or \hskip 1em \or \hskip 2em \else \hskip 3em \fi%
      #6\nobreak\relax
    \dotfill\hbox to\@pnumwidth{\@tocpagenum{#7}}\par
    \nobreak
    \endgroup
  \fi}
\makeatother

%%%% Title page formatting
\begin{comment}
\makeatletter
\def\@maketitle{%
  \newpage
  \null
  \vskip 2em%
  \begin{center}%
  \let \footnote \thanks
    {\Large\bfseries \@title \par}%
    \vskip 1.5em%
    {\normalsize
      \lineskip .5em%
      \begin{tabular}[t]{c}%
        \@author
      \end{tabular}\par}%
    \vskip 1em%
    {\normalsize \@date}%
  \end{center}%
  \par
  \vskip 1.5em}
\makeatother
\end{comment}

%%%%% Named Comments

\title{A Stochastic Model of Chemorepulsion with Additive Noise and Nonlinear Sensitivity}

\author[1]{Ilya Chevyrev \thanks{\emph{Email:} ichevyrev@gmail.com}}
\author[2]{Ben Hambly \thanks{\emph{Email:} hambly@maths.ox.ac.uk}}
\author[3]{Avi Mayorcas \thanks{\emph{Email:} am3015@cam.ac.uk\\
\emph{Acknowledgments.} AM gratefully acknowledges support from the EPSRC Centre For Doctoral Training in Partial Differential Equations: Analysis and Applications [grant number EP/L015811/1] }}
\affil[1]{\small School of Mathematics, University of Edinburgh, Edinburgh EH9 3FD, UK}
\affil[2]{\small Mathematical Institute, University of Oxford, Oxford, OX2 6GG, UK}
\affil[3]{\small Isaac Newton Institute, Cambridge CB3 0EH, UK}

\date{\today}

%\address{Ilya Chevyrev: School of Mathematics, University of Edinburgh, Edinburgh EH9 3FD, UK.}
%\email{ichevyrev@gmail.com} 

%\address{Ben Hambly: Mathematical Institute, University of Oxford, Oxford, OX2 6GG, UK.}
%\email{hambly@maths.ox.ac.uk}

%\address{Avi J. Mayorcas: Mathematical Institute, University of Oxford, Oxford, OX2 6GG, UK.}
%\email{mayorcas@math.ox.ac.uk}

%\keywords{Chemotaxis; exponential ergodicity; advection-diffusion SPDE}

%\subjclass{Primary: 60H15, Secondary: 35K45; 37A25; 92B05}

\begin{document}

\maketitle

\begin{abstract}
    We consider a stochastic partial differential equation (SPDE) model for chemorepulsion, with non-linear sensitivity on the one-dimensional torus. By establishing an a priori estimate independent of the initial data, we
    show that there exists a pathwise unique, global solution to the SPDE. Furthermore, we show that the associated semi-group is Markov and possesses a unique invariant measure, supported on a H{\"o}lder--Besov space of positive regularity, which the solution law converges to exponentially fast.
    The a priori bound also allows us to establish tail estimates on the $L^p$ norm of the invariant measure which are heavier than Gaussian.
\end{abstract}

%\tableofcontents
%\singlespacing
%
\small	
  \textit{Keywords:} Chemotaxis; exponential ergodicity; advection-diffusion SPDE\\
  \textit{AMS 2020 Mathematics Subject Classification:} Primary: 60H15, Secondary: 35K45; 37A25; 92B05
\section{Introduction}
We study the Cauchy problem and establish exponential ergodicity for the SPDE
\begin{equation}\label{eq:1dRepEquation}
\begin{cases}
\partial_t u -\partial_{xx} u = \chi\partial_x (u^2\partial_x \rho_u)+\xi, &\text{ on }\mbR_+\times \mbT,\\
-\partial_{xx}\rho_u = u-\bar{u}, &\text{ on } \mbT,\\
u\tzero = \zeta, &\text{ on }\mbT,
\end{cases}
\end{equation}
where $\xi$ is a space-time white noise, $\mbT:= \mbR/\mbZ$ is the one dimensional torus with unit volume and $\chi>0$ is a positive constant. The spatial average, $\bar{u}:=\langle u,1\rangle_{L^2(\mbT)}$, is equal to the $0^{\text{th}}$ Fourier mode and we will take $\zeta$ to be a H{\"o}lder--Besov distribution of possibly negative regularity. Without loss of generality we assume that the solution to the second equation of \eqref{eq:1dRepEquation} is mean free. 

Without noise, \eqref{eq:1dRepEquation} is a Keller--Segel model of chemorepulsion with nonlinear sensitivity and is an example from a wide family of parabolic PDE models of chemotaxis written, with some generality on the $d$-dimensional torus, $\mbT^d$, for $a\geq 0$, as
\begin{equation}\label{eq:DetermRepKS}
    \begin{cases}
    \partial_t u - \Delta u =  \nabla \cdot (f(u,\rho)\nabla \rho), & \text{ on } \mbR_+\times \mbT^d,\\
    a \partial_t \rho - \Delta \rho = u - \bar{u},&\text{ on }\mbR_+\times \mbT^d,\\
    u\bigtzero = u_0\geq 0,\quad \rho\bigtzero =\rho_0, & \text{ on }\mbT^d.
    \end{cases}
\end{equation}
Chemotaxis refers to the directed movement of cells or bacteria in response to a chemical field. In \eqref{eq:DetermRepKS},
 $u$ represents the cell or bacteria density, $\rho$ the strength of the chemical field and $f\in C^\infty(\mbR^2;\mbR)$ is a density dependent sensitivity function such that $f(0,y)=0$ for all $y\in \mbR$.
 The viscous terms are included to account for microscopic fluctuations in the cell and chemical diffusions.
 Chemorepulsion refers to a situation where cells are biased to move down the chemical gradient, modelled by choosing $f(x,y)\geq 0$ for $x\geq 0$, while chemoattraction refers to the situation 
 where cells are biased to move up the gradient, modelled by choosing $f(x,y)\leq 0$ for $x\geq 0$.
 In situations of typical interest, the chemical field will be generated by the cell population itself, which leads to the nonlinearity in \eqref{eq:DetermRepKS}.
 When $f(u,\rho)=\chi u$, for $\chi \in \mbR$, \eqref{eq:DetermRepKS} is referred to as the Patlak--Keller--Segel model and was first introduced by Keller and Segel in \cite{keller_segel_71}, building on earlier work by Patlak, \cite{patlak_53_randomWalk}.
 When $a =0$, \eqref{eq:DetermRepKS} is known as a parabolic-elliptic model and when $a>0$, as a parabolic-parabolic model.
 A common feature of chemotaxis models is strongly differing behaviour in the chemorepulsive and chemoattractive settings.
 For example, in the parabolic-elliptic Patlak--Keller--Segel model ($f(u,\rho)=\chi u$, $a=0$),
 global well-posedness holds in all dimensions in the chemorepulsive regime ($\chi>0$) and in both regimes (any $\chi \in \mbR$) when $d=1$.
 On the other hand, if $d\geq 2$, in the chemoattractive regime ($\chi<0$) there exists a finite threshold $-\infty<\bar{\chi}_d<0$ such that, under some additional conditions when $d>2$,
 the model exhibits finite time blow-up for all $\chi<\bar{\chi}_d$.
 We refer to \cite{hillen_painter_08_users,perthame_04} for a more detailed exposition of these phenomena and discussion of other models.
 For the one dimensional setting in particular see \cite{hillen_potapov_04,osaki_yagi_01} and for the chemorepulsive setting see \cite{cieslak_laurencot_morales_08}.

In this paper we study \eqref{eq:1dRepEquation}, a one-dimensional additive noise, stochastic version of \eqref{eq:DetermRepKS} in the chemorepulsive and parabolic--elliptic ($a=0$) regime, with nonlinear sensitivity function, $f(u,\rho)=\chi u^2$. Deterministic models including nonlinear sensitivity have been studied since the early work of Keller and Segel, see for example \cite{ford_lauffenburger_91,keller_segel_71_travelling,lapidus_schiller_76,stevens_othmer_97}. These works focused on modelling nonlinear sensitivity with respect to the chemical signal, i.e. $f(u,\rho)= \chi u \tilde{f}(\rho)$. More recently there has been interest in modelling nonlinear sensitivity with respect to the cell density, i.e. $f(u,\rho)= \tilde{f}(u)$. This interest has been motivated, in part, by desires to include volume filling, saturation and density dependent (quorum sensing) regularisation effects into chemotaxis models, \cite{hillen_painter_01,hillen_painter_02,horstmann_winkler_05,lai_xiao_17,tao_13,tao_winkler_12}. The nonlinearity that we treat in \eqref{eq:1dRepEquation} was already considered in the deterministic context in \cite{lai_xiao_17,tao_13}; in \cite{lai_xiao_17} it was shown that in the chemorepulsive setting the parabolic-elliptic model exhibits global existence and convergence to equilibrium when $\tilde{f}(u)\sim u^m$ for all $m>0$. In this case, the nonlinear sensitivity models an increased response to the chemorepulsant when cell density is high.

While it is common to study chemotaxis using PDE models, SPDE approaches can be used to model exogenous processes, study meta-stability or replicate physically relevant fluctuations around the continuum limit. In this work we focus on an additive noise model for two reasons.
Firstly, it is a relatively simple model for which we can accommodate space-time white noise and establish a full ergodic analysis.
Secondly, it allows us to make meaningful comparisons with additive noise stochastic reaction-diffusion equations which have been extensively studied; we refer to \cite{daPrato_debussche_03_quantization,moinat_weber_20_RD,mourrat_weber_xu_16,otto_weber_westdickenberg_14_invariant,tsatsoulis_weber_18} for an incomplete list of works concerning these models.
In particular, we highlight a comparison between our stochastic model, \eqref{eq:1dRepEquation}, and those considered in \cite{moinat_weber_20_RD}.
As in the above works, the main technical step we require is to obtain sufficiently strong a priori bounds on the pathwise solution which are independent of the initial data.
In our context we divide the solution into a regular and an irregular part and the key step is a careful analysis of the more regular, remainder equation exploiting the repulsive nature of our nonlinearity to compensate for lower order terms without definitive sign.
This is the content of Theorem \ref{th:1dRemainderAPriori}. Furthermore, we keep careful track of the exponents and constants through the proof, which in particular allows us
to obtain a tail bound, which is heavier than Gaussian, on the $L^p$ norm of the invariant measure. We give a detailed discussion on the implications of this bound and comparisons to the literature on stochastic reaction-diffusion equations in Remarks \ref{rem:PLimit}, \ref{rem:TailBound} and \ref{rem:Extensions}.

There are many natural questions that arise from our work. For this model it would be interesting to understand if the heavier than Gaussian tails that we establish are in fact optimal. Furthermore, applying our current methods, it appears that the low integrability of the solution, as compared with stochastic reaction-diffusion equations, imposes a significant barrier to considering $m>2$ in \eqref{eq:1dRepEquation}. Understanding whether this is a genuine issue or simply one of methodology would be interesting. Going further, one could naturally ask if it is possible to extend the well-known global well-posedness of attractive chemotaxis models in one dimension, \cite{hillen_potapov_04,osaki_yagi_01}, to the stochastic case with white noise forcing. In addition, since chemotaxis is typically observed in two or three dimensions, extending the above study to higher dimensional versions would certainly be of interest. Using simple power counting one would expect \eqref{eq:1dRepEquation} to be sub-critical in the sense of regularity structures, \cite{hairer_14_RegStruct}, on $\mbR_+\times \mbT^d$ for $d<4$. While local analysis is tractable in the presence of white-noise forcing, establishing global well-posedness for the repulsive model in dimension two seems challenging by our current methods, see \cite[Ch.~7]{mayorcas_20}. Finally, beyond the additive noise case, both multiplicative and conservative noise models are highly relevant in the context of chemotaxis. Extending our results to those cases, especially in a two or three dimensional setting, is a natural goal.

%\paragraph{Structure of the paper:} 
In the remainder of the introduction we summarise some notation in Section \ref{subsec:Notation}, and present our main results in Section \ref{subsec:MainResults}. In Section \ref{sec:StatSHE} we recap the definitions and properties of the linear stochastic heat equation, which plays a key role in our analysis. In Section \ref{sec:1dLocWP} we obtain local well-posedness of \eqref{eq:1dRepEquation} and some ancillary properties of the solution. Section \ref{sec:1dAPriori} contains the main contribution of this paper, establishing an a priori bound that is independent of the initial data and constitutes a \textit{coming down from infinity property}. This a priori bound enables us to establish global well-posedness, existence of invariant measures and the tail bound, \eqref{eq:LawTail}, below. In Section \ref{sec:InvariantMeasures} we establish the existence of invariant measures to \eqref{eq:1dRepEquation} and closely following the approach of \cite{tsatsoulis_weber_18}, we establish the strong Feller property, irreducibility, and exponential ergodicity of the associated semi-group. The appendices, \ref{app:HolderBesovSpaces} and \ref{app:SHERegularity} respectively contain summaries and useful properties of the inhomogeneous H{\"o}lder--Besov spaces and the stochastic heat equation on $\mbT$.
\subsection{Notation}\label{subsec:Notation}
Let $\mbN=\{0,1,\ldots,\}$ denote the set of non-negative integers.
For $k\in\mbN$, we denote by $C^k(\mbT)$ the space of $k$ times continuously differentiable, $1$-periodic, real functions. We denote by $C^\infty(\mbT)$ the space of smooth, periodic functions with values in $\mbR$ and $\mcS'(\mbT)$ for its dual. For $k\in\mbN,\,p\in [1,\infty)$ (resp. $p=\infty$) we write $W^{k,p}(\mbT)$ for the spaces of periodic functions with $p$-integrable (resp. essentially bounded) weak derivatives up to $k^{\text{th}}$ order,
and write $L^p(\mbT)=W^{0,p}(\mbT)$.
We write $\mcB^\alpha_{p,q}(\mbT)$ for the Besov space associated to $\alpha\in \mbR$, $p,q\in [1,\infty]$
and use the shorthand $\mcC^\alpha(\mbT)=\mcB^\alpha_{\infty,\infty}(\mbT)$. Note that for $\alpha \in \mbN$ the spaces $\mcC^\alpha$ and $C^\alpha$ are not equal. See Appendix \ref{app:HolderBesovSpaces} for a full definition.

For $f \in \mcS'(\mbT)$ we denote its spatial mean by $\bar{f}:=\langle 1,f\rangle_{L^2(\mbT)}$. For $\mfm \in \mbR$ we write, for example, $\mcS'_\mfm(\mbT),C^k_\mfm(\mbT), \mcC^\alpha_\mfm(\mbT), L^p_{\mfm}(\mbT)$, for the corresponding spaces with the additional constraint that $\bar{f} =\mfm$. When the context is clear we drop dependence on the domain in order to lighten notation.

Given a Banach space $E$, a subset $I\subseteq [0,\infty)$ and $\kappa \in (0,1)$, we write $C_IE:= C(I;E)$ (resp. $\mcC^{\kappa}_I E := \mcC^{\kappa}(I;E)$) for the space of continuous (resp. $\kappa$-H{\"o}lder continuous) maps $f\colon I\rightarrow E$
equipped with the norm $\|f\|_{C_IE}:= \sup_{t\in I} \|f_t\|_E$ (resp. $\|f\|_{\mcC_I^{\kappa}E}:=\|f\|_{C_IE}+ \sup_{t
\neq s \in I} \frac{\|f_t-f_s\|_{E}}{|t-s|^{\kappa}}$). For $T>0$, we use the shorthand $C_TE=C_{[0,T]}E$ and $\mcC_T^\kappa E=\mcC_{[0,T]}^\kappa E$. Note that the norm $\|f\|_{\mcC_T^{\kappa}E}$
is equivalent to $\|f_0\|_E + \sup_{t\neq s \in [0,T]}\frac{\|f_t-f_s\|_E}{|t-s|^\kappa}$.
For $\eta>0$ we let $C_{\eta;T}E:= C_\eta((0,T];E)$ be the set of continuous functions $f:(0,T]\rightarrow E$
such that
\begin{equation*}
\|f\|_{C_{\eta;T}E} := \sup_{ t \in (0,T]} (t^\eta\wedge 1)\|f_t\|_E < \infty.
\end{equation*}
For a Banach space, $E$, equipped with its Borel $\sigma$-algebra, $\mcE$, we use the notation $\mcB_b(E)$, $\mcC_b(E)$ and $\mcC^1_b(E)$ respectively for the sets of bounded Borel measurable, continuous and continuously Fr{\'e}chet differentiable maps $\Phi:E\rightarrow \mbR$. We write $\mcP(E)$ for the set of probability measures on $E$ which we equip with the topology of weak convergence. For a sequence $(\mu_n)_{n\geq 1} \subset \mcP(E)$ we write $\mu_n \rightharpoonup \mu$ to indicate the weak convergence of $(\mu_n)_{n\geq 1}$ to $\mu\in \mcP(E)$. Using $\Pi_{\mu,\nu}$ to denote the set of all couplings between $\mu,\,\nu \in \mcP(E)$, we write the total variation distance as,
$$\|\mu-\nu\|_{\TV} := \sup_{A \in \mcE}|\mu(A)-\nu(A)| = \inf_{\pi \in 
\Pi_{\mu,\nu}} \iint_{E^2} \mathds{1}_{x=y}\dd \pi(x,y).$$
We write $\lesssim$ to indicate that an inequality holds up to a constant depending on quantities that we do not keep track of or are fixed throughout. 
When we do wish to emphasise the dependence on certain quantities, we either write $\lesssim_{K,v}$ or define $C:=C(\alpha,p,d)>0$ and write $\leq C$.
\subsection{Main Results}\label{subsec:MainResults}
We fix a filtered probability space $(\Omega,\mcF,(\mcF_t)_{t\geq 0},\mbP)$ carrying a mean-free, space-time white noise, $\xi$, defined in Section \ref{sec:StatSHE} below. We fix $\alpha_0\in \left(-\frac{1}{2},0\right)$, $\alpha \in \left(0,\alpha_0+\frac{1}{2}\right)$ and $\eta >0$ such that,
\begin{equation}\label{eq:exponentCriteria}
\frac{\alpha-\alpha_0}{2}<\eta< \frac{1}{4}.
\end{equation}
\begin{theorem}[Global Well-Posedness]\label{th:1dGWP}
Let $T>0$, $\mfm \in \mbR$, $\chi>0$ and $\zeta\in \mcC^{\alpha_0}_\mfm(\mbT)$. Then there exists a unique, probabilistically strong, mild solution, $u(\zeta)$, to \eqref{eq:1dRepEquation} such that $\mbP$-a.s. $u(\zeta)\in C_{\eta;T}\mcC_\mfm^{\alpha}(\mbT)$.
\end{theorem}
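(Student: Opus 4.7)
The plan is to reduce global well-posedness to two ingredients developed elsewhere in the paper: the local well-posedness theory of Section~\ref{sec:1dLocWP}, and the initial-data-independent a priori estimate of Theorem~\ref{th:1dRemainderAPriori}. Once both are available, Theorem~\ref{th:1dGWP} follows from a standard blow-up-alternative argument.

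First I would perform a Da Prato--Debussche shift $u = Z + v$, where $Z$ is the stationary mean-zero solution of the linear stochastic heat equation constructed in Section~\ref{sec:StatSHE}. Almost surely $Z \in C_T \mcC^{\beta}_0$ for every $\beta < 1/2$, and in particular $Z \in C_T \mcC^{\alpha}_0$. The remainder $v$ then solves the random PDE
\begin{equation*}
\partial_t v - \partial_{xx} v = \chi \partial_x\bigl((v+Z)^2 \partial_x \rho_{v+Z}\bigr), \qquad v\tzero = \zeta - Z_0 \in \mcC^{\alpha_0}_0,
\end{equation*}
and the fixed-point theory of Section~\ref{sec:1dLocWP} supplies a random maximal existence time $T^\star(\zeta,\omega) \in (0,\infty]$ together with a unique, mild, $(\mcF_t)$-adapted solution $v \in C_{\eta;T^\star}\mcC^\alpha_0$, subject to the usual blow-up alternative that either $T^\star = \infty$ or $\limsup_{t\uparrow T^\star}\|v_t\|_{\mcC^\alpha} = \infty$. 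The exponent constraint~\eqref{eq:exponentCriteria}, in particular $\eta > (\alpha-\alpha_0)/2$, is precisely what makes the heat-semigroup bound $\|e^{t\partial_{xx}}(\zeta - Z_0)\|_{\mcC^\alpha} \lesssim t^{-(\alpha-\alpha_0)/2}\|\zeta - Z_0\|_{\mcC^{\alpha_0}}$ compatible with the weighted norm on $C_{\eta;T}\mcC^\alpha$, so that the fixed point closes in a ball of this space.

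To upgrade from local to global I would apply Theorem~\ref{th:1dRemainderAPriori}: on a set of full $\mbP$-measure and for every $T > 0$ it provides a pathwise bound $\|v_t\|_{\mcC^\alpha} \le \Phi_T(Z)$ on $(0, T \wedge T^\star)$ (away from a vanishing neighbourhood of $t = 0$), where $\Phi_T$ depends only on the noise realization and \emph{not} on the initial datum $\zeta$. This is the coming-down-from-infinity estimate highlighted in the introduction, and it rules out the blow-up alternative, so that $T^\star = \infty$ almost surely. Undoing the decomposition, $u := Z + v$, the embedding $Z \in C_T\mcC^\alpha_0$ combined with $v \in C_{\eta;T}\mcC^\alpha_0$ yields the desired probabilistically strong global mild solution in $C_{\eta;T}\mcC^\alpha_\mfm$, and uniqueness is inherited directly from the local theory via a gluing argument on the deterministic times at which $\|v\|_{\mcC^\alpha}$ is controlled. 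All of the genuinely difficult analysis is concentrated in Theorem~\ref{th:1dRemainderAPriori}; the argument outlined here is a routine patching step.
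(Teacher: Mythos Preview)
Your overall strategy coincides with the paper's: split $u = w + v$ with $v$ the SHE, invoke the local theory of Section~\ref{sec:1dLocWP} for the remainder, and globalize via Theorem~\ref{th:1dRemainderAPriori}. The paper packages the deterministic globalization into Proposition~\ref{prop:RemainderGWP}, after which Theorem~\ref{th:1dGWP} is immediate from the almost-sure regularity $v_{0,\cdot}\in C_T\mcC^\alpha_0$.

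There is, however, a substantive inaccuracy in what you claim the a priori bound delivers. Theorem~\ref{th:1dRemainderAPriori} does \emph{not} yield a bound on $\|w_t\|_{\mcC^\alpha}$; it bounds $\|w_t\|_{L^p}$ for even $p\geq 2$, and $L^p$ does not embed into $\mcC^\alpha$ when $\alpha>0$. So your blow-up alternative in $\mcC^\alpha$ is not ruled out by the estimate you cite. The actual mechanism, spelled out in the proof of Proposition~\ref{prop:RemainderGWP}, is to take $p > -1/\alpha_0$ so that $L^p(\mbT)\hookrightarrow \mcC^{\alpha_0}(\mbT)$: the a priori bound then controls $\|w_t\|_{\mcC^{\alpha_0}}$ away from $t=0$, and since the local existence time $T_*$ in~\eqref{eq:1dT(R)Def} depends only on the $\mcC^{\alpha_0}$-norm of the data and on $\|Z\|_{C_T\mcC^\alpha}$, one restarts the local construction on successive intervals of uniformly bounded-below length until reaching $T$. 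The $\mcC^\alpha$ control on each sub-interval then comes from~\eqref{eq:bound_by_1}, not from the a priori estimate itself.

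Two minor slips: the SHE used here starts from zero at $t=0$ (Definition~\ref{def:SHE}), so it is not stationary and the remainder has initial data exactly $\zeta$; and the remainder carries mean $\mfm$ by Corollary~\ref{cor:1dSolConstantMean}, not mean zero as you write.
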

\begin{remark}
While we frame Theorem \ref{th:1dGWP} as a probabilistic statement, the proof is mainly deterministic and based on PDE techniques. In the course of the proof we additionally establish local Lipschitz continuity of the solution in both initial data and the noise, see Proposition \ref{prop:RemainderGWP}. This will be important in Section~\ref{subsec:FullSupport} when we establish full support of the law on $\mcC^{\alpha}_{\mfm}(\mbT)$.
\end{remark}
\begin{remark}
The statement of Theorem \ref{th:1dGWP} remains valid for $\eta =\frac{\alpha-\alpha_0}{2}$.
However, we take $\eta >\frac{\alpha-\alpha_0}{2}$ to simplify some statements and proofs below. In general we may think of $\eta$ as being arbitrarily close to $\frac{\alpha-\alpha_0}{2}$.
\end{remark}
Let $\mcL(u_t(\zeta)) := u_t(\zeta)\# \mbP \in \mcP\big(\mcC_{\mfm}^{\alpha}(\mbT)\big)$ denote the law of the solution, started from $\zeta \in \mcC^{\alpha_0}_{\mfm}(\mbT)$, at time $t>0$. 
\begin{theorem}[Exponential Ergodicity]\label{th:1dexpErgodic}
Let $\mfm \in \mbR$ and $\delta \in (0,1/2)$. Then there exists a unique measure $\nu \in \mcP\big(\mcC^{1/2-\delta}_\mfm(\mbT )\big)$ which is invariant for the semi-group associated to~\eqref{eq:1dRepEquation}.
Furthermore, $\nu$ has full support in $\mcC^{1/2-\delta}_\mfm$ and there exists $c>0$ such that for all $t>1$ and $\zeta \in \mcC^{\alpha_0}_\mfm$,
\begin{equation}\label{eq:LawConverge}
\|\mcL(u_t(\zeta)) -\nu\|_{\TV} \,\leq e^{-ct}.
\end{equation}
Finally, for any $p\in [1,\infty)$ there exists $\Lambda:= \Lambda(p,\delta)>0$ such that
\begin{equation}\label{eq:LawTail}
\int_{\mcC^{1/2-\delta}_{\mfm}} \exp\left(\Lambda \|f\|^{1-2\delta}_{L^p}\right)\,\nu (\dd f)<\infty.
\end{equation}
\end{theorem}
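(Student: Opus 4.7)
The plan is to follow the Lyapunov approach of \cite{tsatsoulis_weber_18}, combining the a priori estimate of Theorem \ref{th:1dRemainderAPriori} with the strong Feller property and irreducibility to obtain uniqueness and exponential convergence in total variation, and then to track the exponents in the same a priori bound carefully enough to extract the tail estimate \eqref{eq:LawTail}.

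\textbf{Existence of an invariant measure.} Decomposing $u = Z + v$ where $Z$ denotes the stationary SHE on $\mbT$ and $v$ the regular remainder, the coming down from infinity property of Theorem \ref{th:1dRemainderAPriori} furnishes a uniform-in-time bound on $\mbE\bigl[\|u_t(\zeta)\|_{\mcC^{1/2-\delta'}}\bigr]$ for some $\delta' < \delta$, the initial data $\zeta$ only entering through a vanishing transient. Since $\mcC^{1/2-\delta'}_\mfm \hookrightarrow \mcC^{1/2-\delta}_\mfm$ compactly, the family of time averages $T^{-1}\int_0^T \mcL(u_t(\zeta))\dd t$ is tight in $\mcP(\mcC^{1/2-\delta}_\mfm)$; the Feller property inherited from the local Lipschitz continuity of Proposition \ref{prop:RemainderGWP} then allows Krylov--Bogoliubov to produce an invariant measure $\nu$.

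\textbf{Strong Feller, irreducibility and uniqueness.} The strong Feller property on $\mcC^{\alpha_0}_\mfm$ is established by linearising \eqref{eq:1dRepEquation} around the solution and applying a Bismut--Elworthy--Li type gradient formula: the noise acts non-degenerately on every non-zero Fourier mode, and the linearised equation is a linear parabolic equation whose coefficients are controlled by the a priori bound, which should yield $|D_\zeta P_t \Phi(\zeta)| \leq C(t,\|\zeta\|)\|\Phi\|_{C_b}$ for $\Phi \in \mcB_b$. Irreducibility in $\mcC^{1/2-\delta}_\mfm$ is obtained via a Stroock--Varadhan type support argument: combining the continuity in the driving noise (Proposition \ref{prop:RemainderGWP}) with full support of the SHE on pathspace allows one to steer the system arbitrarily close to any target in $\mcC^{1/2-\delta}_\mfm$, which simultaneously delivers full support of $\nu$ in $\mcC^{1/2-\delta}_\mfm$. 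Together with the geometric return to a bounded set furnished by Theorem \ref{th:1dRemainderAPriori}, these two ingredients plug into the Harris-type framework of \cite{tsatsoulis_weber_18} to yield uniqueness of $\nu$ and the total variation decay \eqref{eq:LawConverge}.

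\textbf{Tail estimate.} By stationarity $u_t \sim \nu$ when $u_0 \sim \nu$, and writing $u_t = Z_t + v_t$ gives $\|u_t\|_{L^p} \leq \|Z_t\|_{L^p} + \|v_t\|_{L^p}$. Fernique's theorem provides Gaussian integrability of arbitrary polynomials in Hölder--Besov norms of $Z$, so the entire task reduces to extracting from Theorem \ref{th:1dRemainderAPriori} a pathwise bound of the schematic form $\|v_t\|_{L^p} \leq C\bigl(1 + \mathcal{Z}\bigr)^{q}$ with $q \leq 2/(1-2\delta)$, where $\mathcal{Z}$ is a polynomial in finitely many Hölder--Besov norms of $Z$. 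Raising to the power $1-2\delta$ and exponentiating then delivers \eqref{eq:LawTail} for $\Lambda$ sufficiently small relative to the Fernique constant of $\mathcal{Z}$. I expect this to be the main obstacle: unlike stochastic reaction-diffusion equations where the coercive term produces a quadratic gain on $\|v\|$ and hence Gaussian tails on $\nu$, the chemotactic nonlinearity $\partial_x(u^2\partial_x\rho_u)$ gives only a cubic gain in the remainder energy inequality of Section \ref{sec:1dAPriori}, which forces an exponent strictly larger than $2$ in the polynomial dependence of $\|v\|$ on $\mathcal{Z}$ and, consequently, the heavier-than-Gaussian stretched exponent $1-2\delta$.
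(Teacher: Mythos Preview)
Your proposal is correct and takes essentially the same approach as the paper: Krylov--Bogoliubov for existence, a Bismut--Elworthy--Li formula for the strong Feller property, a support argument via continuity in the noise for irreducibility and full support, the Tsatsoulis--Weber contraction framework for \eqref{eq:LawConverge}, and Fernique combined with the a priori bound of Theorem~\ref{th:1dRemainderAPriori} for the tail. For the last step your schematic $\mathcal{Z}$ is in fact just the single norm $\|Z\|_{C_t\mcC^\alpha}$ with $q=1/\alpha$ (not a higher polynomial), so the constraint $q\leq 2/(1-2\delta)$ reads $\alpha\geq 1/2-\delta$, and the paper obtains \eqref{eq:LawTail} precisely by pushing $\alpha$ arbitrarily close to $1/2$.
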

The proof of Theorem \ref{th:1dGWP} is completed at the end of Section \ref{sec:1dAPriori} and the proof of Theorem \ref{th:1dexpErgodic} is completed at the end of Section \ref{sec:InvariantMeasures}.
\section{Stochastic Heat Equation}\label{sec:StatSHE}
In what follows we will decompose the solution $u$ to \eqref{eq:1dRepEquation} into a regular and irregular part, with the irregular part being the solution to a linear stochastic heat equation (SHE). We briefly recap some necessary definitions of space-time white noise, the solution to the SHE and some of its properties.

\begin{definition}[White Noise]\label{def:WhiteNoiseDefinition}
Given an abstract probability space $(\Omega,\mcF,\mbP)$ we say that an $\mbR$-valued, stochastic process, indexed by $L^2(\mbR_+\times \mbT)$, $\{\xi(\varphi)\,:\,\varphi \in L^2(\mbR_+\times \mbT)\}$, defines a spatially mean-free space-time white noise if
\begin{enumerate}
\item \label{it:SpaceTimeMeanCovariance}$\mbE[\xi(\varphi)]=0$, $\mbE[\xi(\varphi)\xi(\varphi')] = \langle \varphi,\varphi'\rangle_{L^2(\mbR_+\times \mbT)}$ for all $\varphi,\,\varphi' \in L^2(\mbR_+\times \mbT)$,
\item \label{it:NoiseMeanFree}$ \xi(\psi\otimes 1)=0$ for all $\psi \in L^2(\mbR_+)$, $\mbP$-a.s.
\end{enumerate}
\end{definition}
For $\varphi\in L^2(\mbR_+ \times \mbT)$ we write $\xi(\varphi)$ in the convenient form of a stochastic integral,
\begin{equation}\label{eq:StochasticIntegrals}
\xi(\varphi) = \int_{\mbR_+} \int_{\mbT} \varphi(t,x) \xi(\dd t,\,\dd x),
\end{equation}
even though $\xi$ is almost surely not a measure. We refer to \cite[Ch. 4]{daprato_zabczyk_14} for further details
and a proper construction of \eqref{eq:StochasticIntegrals}.

We define the filtration
\begin{equation*}
(\tilde{\mcF}_t)_{t\geq 0} := \sigma \left(\left\{ \xi(\varphi)\,:\, \varphi \in L^2(\mbR_+ \times \mbT),\,\varphi |_{(t,\infty)} \equiv 0 \right\}\right),
\end{equation*}
and let $(\mcF_t)_{t\geq 0}$ denote its usual augmentation. 
\begin{definition}\label{def:SHE}
Let $0\leq t_0<T$ and $\mcH$ be the periodic heat kernel on $\mbT$ defined in \eqref{eq:PeriodicHeatKernel}. We say that the $\mcS'(\mbT)$ valued process,
\begin{equation}\label{eq:StochasticIntegral}
(t_0,T] \ni t\mapsto v_{t_0,t}(\phi):=\int_{t_0}^t \int_{\mbT}(\mcH_{t-r}\ast \phi)(x)\,\xi(\dd r,\dd x) \quad \,\forall \phi \in C^\infty(\mbT),
\end{equation}
is a mild solution to the SHE, with zero initial condition at $t=t_0$.
\end{definition}
\begin{theorem}\label{th:MarkovSHESpaceTimeRegular}
The process $[t_0,T]\ni t\mapsto v_{t_0,t}$ is, continuous, Markov, $(\mcF_t)_{t \in  [t_0,T]}$ adapted and for any $\alpha<1/2$, $\kappa \in [0,1/2)$ and $p\geq 1$ there exists $C:=C(T,\alpha,\kappa,p)>0$ such that, 
\begin{equation}\label{eq:MarkovSHESpaceTimeRegular}
\mbE\left[ \sup_{t\,\neq \,s \in [t_0,T]} \frac{\|v_{t_0,t}-v_{t_0,s}\|^p_{\mcC^{\alpha-2\kappa}}}{|t-s|^{p\kappa}} \right]<C.
\end{equation}
Furthermore, for any $0\leq t_0 < t\leq T$, the law of $v_{t_0,t}$ depends only on $|t-t_0|$.
\end{theorem}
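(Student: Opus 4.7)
The plan is to exploit the fact that $v_{t_0,\cdot}$ is a centred Gaussian process with values in $\mcS'(\mbT)$ whose covariance structure is explicit, and to deduce all four assertions from moment estimates on its Fourier modes. Adaptedness to $(\mcF_t)$ is immediate from the definition in \eqref{eq:StochasticIntegral}: the Wiener integrand $\mcH_{t-r}\ast\phi$ is deterministic and supported in $r\in[t_0,t]$, so $v_{t_0,t}(\phi)\in \tilde{\mcF}_t\subset\mcF_t$ for every $\phi\in C^\infty(\mbT)$. Continuity of $t\mapsto v_{t_0,t}$ with values in $\mcC^{\alpha-2\kappa}$ will be a by-product of the H\"older moment bound \eqref{eq:MarkovSHESpaceTimeRegular} together with Kolmogorov's continuity criterion applied to a Banach-valued version.

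For the main estimate I would pass to Fourier variables. The mean-free white noise decomposes into independent (up to complex conjugation) complex Brownian motions $\xi_k$ indexed by $k\in\mbZ\setminus\{0\}$, and the Fourier modes are $\hat v_{t_0,t}(k)=\int_{t_0}^t e^{-4\pi^2 k^2(t-r)}\,\dd\xi_k(r)$. For $t_0\leq s\leq t$, splitting the increment $\hat v_{t_0,t}(k)-\hat v_{t_0,s}(k)$ into a fresh-noise piece $\int_s^t e^{-4\pi^2 k^2(t-r)}\,\dd\xi_k(r)$ and a semigroup-difference piece $\int_{t_0}^s(e^{-4\pi^2 k^2(t-r)}-e^{-4\pi^2 k^2(s-r)})\,\dd\xi_k(r)$, the It\^o isometry combined with the elementary inequality $1-e^{-x}\lesssim x^{2\kappa}$ for $x\geq 0$ and $\kappa\in[0,1/2]$ gives
\begin{equation*}
\mbE|\hat v_{t_0,t}(k)-\hat v_{t_0,s}(k)|^2 \;\lesssim\; |k|^{-2+4\kappa}\,|t-s|^{2\kappa}.
\end{equation*}
Summing over the dyadic shell $|k|\sim 2^j$ and invoking Gaussian hypercontractivity together with Bernstein's inequality $\|\Delta_j f\|_{L^\infty}\lesssim 2^{j/q}\|\Delta_j f\|_{L^q}$ yields, for every $q\geq 1$,
\begin{equation*}
\mbE\|\Delta_j(v_{t_0,t}-v_{t_0,s})\|_{L^\infty}^q \;\lesssim\; 2^{jq(2\kappa-1/2+1/q)}\,|t-s|^{q\kappa}.
\end{equation*}
Taking $q$ large enough that $\alpha+1/q<1/2$, the Besov embedding $\mcB^{\alpha-2\kappa+1/q}_{q,q}\hookrightarrow\mcC^{\alpha-2\kappa}$ and summability in $j$ then produce $\mbE\|v_{t_0,t}-v_{t_0,s}\|_{\mcC^{\alpha-2\kappa}}^q\lesssim |t-s|^{q\kappa}$. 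Replacing $\kappa$ by some $\kappa'\in(\kappa,1/2)$ and $\alpha$ by $\alpha-2(\kappa'-\kappa)$ (still strictly below $1/2$) absorbs the $1/q$-loss in the Kolmogorov continuity criterion and delivers the H\"older-in-time bound \eqref{eq:MarkovSHESpaceTimeRegular} for some large $p$. Fernique's theorem, applied to the Banach-valued Gaussian element $v_{t_0,\cdot}\in\mcC^\kappa_{[t_0,T]}\mcC^{\alpha-2\kappa}$, then upgrades this to finite moments of every order $p\geq 1$.

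The Markov property follows from the mild-form identity, valid for $t_0\leq s\leq t$,
\begin{equation*}
v_{t_0,t} \;=\; \mcH_{t-s}\ast v_{t_0,s} \;+\; \int_s^t\!\int_{\mbT}\mcH_{t-r}(\,\cdot-x)\,\xi(\dd r,\dd x),
\end{equation*}
which is an immediate consequence of the semigroup property of $\mcH$ and a stochastic Fubini argument; the second term is independent of $\mcF_s$ and has the same law as $v_{s,t}$, so the conditional law of $v_{t_0,t}$ given $\mcF_s$ depends only on $v_{t_0,s}$. Finally, stationarity in $t-t_0$ is the observation that the Gaussian covariance
\begin{equation*}
\mbE[v_{t_0,t}(\phi)\,v_{t_0,t}(\psi)] \;=\; \int_{t_0}^t\langle\mcH_{t-r}\ast\phi,\mcH_{t-r}\ast\psi\rangle_{L^2(\mbT)}\,\dd r \;=\; \int_0^{t-t_0}\langle\mcH_u\ast\phi,\mcH_u\ast\psi\rangle\,\dd u
\end{equation*}
depends only on $t-t_0$. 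The most delicate step is the Besov-norm computation in the second paragraph: carefully choosing $q$ so that the Littlewood--Paley summation converges while the Bernstein loss is absorbed into the slack $1/2-\alpha$ sits exactly at the regularity threshold that forces $\alpha<1/2$ for the one-dimensional SHE, and it is here that one must also check the constants are uniform in $t_0$ so that the moment bound is consistent with the translation invariance of the law.
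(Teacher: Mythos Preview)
Your proposal is correct and follows essentially the same route as the paper: compute second moments of the Littlewood--Paley/Fourier increments via the explicit heat-kernel covariance, upgrade to higher moments by Gaussian hypercontractivity, and feed this into a Kolmogorov-type criterion together with the Besov embedding, while stationarity and the Markov property come from the explicit covariance and the semigroup identity exactly as you indicate. The only cosmetic differences are that the paper packages the Kolmogorov step into its Lemma~\ref{lem:FourierKolmogorov} (working directly with the blocks $h_k$ rather than individual modes) and obtains all $p$-th moments via Nelson's estimate \emph{before} Kolmogorov rather than invoking Fernique afterwards; also, your Bernstein step and the Besov embedding $\mcB^{\alpha-2\kappa+1/q}_{q,q}\hookrightarrow\mcC^{\alpha-2\kappa}$ do the same job, so you only need one of them.
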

\begin{proof}
See Appendix \ref{app:SHERegularity}.
\end{proof}
\section{Local Well-Posedness}\label{sec:1dLocWP}
Throughout this section, we fix $T>0$ and $\chi\in\mbR$ (not necessarily positive).
From Theorem \ref{th:MarkovSHESpaceTimeRegular} we see that $\mbP$-a.s. the map $t\mapsto v_{0,t}$ is finite only in $ C_T\mcC_0^{\alpha}$, for $\alpha<1/2$. We therefore cannot expect to find solutions to \eqref{eq:1dRepEquation} of any higher regularity. Obtaining the a priori estimates in Section \ref{sec:1dAPriori} requires at least one degree of spatial regularity. Therefore, although the equation is not singular itself, we solve for a remainder process with higher regularity using a Da Prato--Debussche trick,~\cite{daPrato_debussche_03_quantization}. Concretely we decompose the solution as $u_t:=w_t+Z_t$, where $t\mapsto Z_t$ is a deterministic function taking values in the H{\"o}lder--Besov space $\mcC_0^{\alpha}$ with zero spatial mean. Subsequently we will take $Z_t$ to be a $\mbP$-a.s. realisation of the stochastic heat equation $t\mapsto v_{0,t}$. The unknown, $w$, solves,
\begin{equation}\label{eq:1dRepRemainder}
\begin{cases}
\partial_t w- \partial_{xx} w = \chi \partial_x((w+Z)^2\partial_x \rho_{w+Z}), &\text { on } \mbR_+ \times \mbT,\\
-\partial_{xx} \rho_{w+Z} = w-\bar{w}+Z, &\text{ on }\mbT,\\
w\tzero = \zeta, &\text{ on }\mbT.
\end{cases}
\end{equation}
For the rest of this section, we fix $\zeta\in \mcC^{\alpha_0}(\mbT)$ and $Z\in C_T\mcC^{\alpha}_0$.

We first show that under suitable regularity assumptions on $w$, the right hand side of the first equation in \eqref{eq:1dRepRemainder} is a well-defined element of $C_{\eta;T}\mcC^\alpha(\mbT)$.
\begin{lemma}\label{lem:PsiWellDefined}
Let $w\in C_{\eta;T}\mcC^{\alpha}$.
Then the map $w\mapsto \Psi w$, defined for any $t\in (0,T]$, by
\begin{equation}
    (\Psi w)_t := e^{t\Delta} \zeta + \int_0^t e^{(t-s)\Delta} \chi
    \partial_x ((w_s+Z_s)^2 \partial_x \rho_{w_s+Z_s})\dd s,
\end{equation}
is well-defined from $C_{\eta;T}\mcC^{\alpha}$ to itself.
\end{lemma}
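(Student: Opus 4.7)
The plan is to bound each of the two contributions to $(\Psi w)_t$ separately in the weighted norm $\|\cdot\|_{C_{\eta;T}\mcC^\alpha}$, using Schauder estimates for the heat semigroup together with elliptic regularity for $\rho$, and finally to argue continuity in $t$. Concretely, I would split
\begin{equation*}
(\Psi w)_t = e^{t\Delta}\zeta + \int_0^t e^{(t-s)\Delta}\partial_x F_s\,\dd s, \qquad F_s := \chi(w_s+Z_s)^2\partial_x\rho_{w_s+Z_s},
\end{equation*}
and treat the two summands in turn.

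For the linear piece, I would invoke the standard heat-kernel Schauder estimate (see Appendix~\ref{app:SHERegularity}/\ref{app:HolderBesovSpaces}) giving $\|e^{t\Delta}\zeta\|_{\mcC^\alpha}\lesssim t^{-(\alpha-\alpha_0)/2}\|\zeta\|_{\mcC^{\alpha_0}}$. Multiplying by $t^\eta$ this is uniformly bounded for $t\in(0,T]$ because $\eta\geq (\alpha-\alpha_0)/2$ by \eqref{eq:exponentCriteria}, and continuity in $t$ on $(0,T]$ is immediate from strong continuity of the semigroup. To deal with the integral piece, I would first estimate $F_s$ pointwise in $s$. Since $\alpha>0$, the space $\mcC^\alpha$ is a multiplicative algebra, and elliptic regularity for the mean-zero inverse Laplacian yields $\|\partial_x \rho_{w_s+Z_s}\|_{\mcC^\alpha}\lesssim \|\partial_x \rho_{w_s+Z_s}\|_{\mcC^{\alpha+1}}\lesssim \|w_s+Z_s\|_{\mcC^\alpha}$. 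Combining these,
\begin{equation*}
\|F_s\|_{\mcC^\alpha}\,\lesssim\,|\chi|\,\|w_s+Z_s\|_{\mcC^\alpha}^{3}\,\lesssim\,|\chi|\,\bigl(s^{-\eta}\|w\|_{C_{\eta;T}\mcC^\alpha}+\|Z\|_{C_T\mcC^\alpha}\bigr)^{3}.
\end{equation*}

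Next I would apply the gain-of-one-derivative Schauder bound $\|e^{(t-s)\Delta}\partial_x g\|_{\mcC^\alpha}\lesssim (t-s)^{-1/2}\|g\|_{\mcC^\alpha}$ and bound the integral by $\int_0^t (t-s)^{-1/2} s^{-3\eta}\dd s$, which by the Beta-function identity equals $C\,t^{1/2-3\eta}$ provided $3\eta<1$. Multiplying by the weight $t^\eta$ gives a factor $t^{1/2-2\eta}$, which is finite (and in fact vanishes at $t=0$) precisely because $\eta<1/4$, the upper bound in \eqref{eq:exponentCriteria}. Putting everything together,
\begin{equation*}
\sup_{t\in(0,T]} (t^\eta\wedge 1)\|(\Psi w)_t\|_{\mcC^\alpha} \,\lesssim\, \|\zeta\|_{\mcC^{\alpha_0}} + T^{1/2-2\eta}\bigl(\|w\|_{C_{\eta;T}\mcC^\alpha}+\|Z\|_{C_T\mcC^\alpha}\bigr)^{3},
\end{equation*}
which is finite. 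Continuity of $t\mapsto (\Psi w)_t$ in $\mcC^\alpha$ on $(0,T]$ then follows from a standard argument: for $0<s<t$ one splits the integral on $[0,s-\delta]\cup[s-\delta,t]$, uses continuity of the semigroup on the first piece, and controls the remaining piece by the same singular-integral bound with a window that shrinks as $\delta\to 0$.

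The main obstacle is of a bookkeeping nature: one must align three separate exponent constraints (the short-time decay of $e^{t\Delta}\zeta$, the time-singularity from $s^{-3\eta}$ produced by the cubic nonlinearity, and the Schauder singularity $(t-s)^{-1/2}$) so that the final $t^{1/2-2\eta}$ is bounded. This is exactly the role of the joint window \eqref{eq:exponentCriteria} on $(\alpha_0,\alpha,\eta)$, and no cancellation or sign condition on $\chi$ is used here—this lemma is purely a regularity statement, with the repulsive sign only becoming relevant in the a priori estimate of Section~\ref{sec:1dAPriori}.
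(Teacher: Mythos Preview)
Your proposal is correct and follows essentially the same approach as the paper: bound the linear piece via the Schauder estimate~\eqref{eq:HeatFlow}, use the algebra property of $\mcC^\alpha$ together with elliptic regularity~\eqref{eq:BesovGradElliptic} to control $\|F_s\|_{\mcC^\alpha}$ cubically in $\|w_s+Z_s\|_{\mcC^\alpha}$, and then integrate the resulting $(t-s)^{-1/2}s^{-3\eta}$ singularity using $\eta<\tfrac14$. The only cosmetic difference is that the paper expands $(w+Z)^2$ term by term to obtain~\eqref{eq:1dNonLinearBounds}, whereas you keep the cube bundled; the paper also restricts first to $t\in(0,T\wedge1]$ to handle the weight $(t^\eta\wedge1)$ cleanly, which you would need to do as well once $T>1$.
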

\begin{proof}
Applying \eqref{eq:HeatFlow}, for any $t >0$, we have that
\begin{equation}\label{eq:InitialCondHeatKernel}
\|e^{t\Delta}\zeta\|_{\mcC^{\alpha}} \lesssim t^{-\frac{\alpha-\alpha_0}{2}}\|\zeta\|_{\mcC^{\alpha_0}}.
\end{equation}
Concerning the integral term, expanding the square and applying Theorem~\ref{th:BesovProduct}, along with \eqref{eq:BesovElliptic} we obtain the bounds
\begin{equation*}
\begin{aligned}
\|w^2_s\partial_x \rho_{w_s+Z_s}\|_{\mcC^{\alpha}}&\leq \|w\|^2_{\mcC^{\alpha}}\|\partial_x \rho_{w_s+Z_s}\|_{\mcC^{\alpha}}\lesssim \|w_s\|^3_{\mcC^{\alpha}}+\|w_s\|^2_{\mcC^{\alpha}}\|Z_s\|_{\mcC^{\alpha}}\\
2\|w_sZ_s\partial_x \rho_{w_s+Z_s}\|_{\mcC^{\alpha}} &\leq 2\|w\|_{\mcC^{\alpha}}\|Z_s\|_{\mcC^{\alpha}}\| \partial_x \rho_{w_s+Z_s}\|_{\mcC^{\alpha}} \lesssim 2\|w_s\|^2_{\mcC^\alpha}\|Z_s\|_{\mcC^\alpha} + 2\|w_s\|_{\mcC^{\alpha}}\|Z_s\|^2_{\mcC^{\alpha}}\\
\|Z^2_s\partial_x \rho_{w_s+Z_s}\|_{\mcC^{\alpha}}&\leq 	\|Z_s\|^2_{\mcC^{\alpha}}\|\partial_x \rho_{w_s+Z_s}\|_{\mcC^{\alpha}}\lesssim \|w_s\|_{\mcC^{\alpha}}\|Z_s\|^2_{\mcC^\alpha}+\|Z_s\|^3_{\mcC^{\alpha}}.
\end{aligned}
\end{equation*}
Combining these yields that
\begin{equation}\label{eq:1dNonLinearBounds}
\|(w_s+Z_s)^2\partial_x \rho_{w_s+Z_s}\|_{\mcC^{\alpha}}\lesssim \sum_{k=0}^{3}\binom{3}{k} \|w_s\|^{3-k}_{\mcC^{\alpha}}\|Z_s\|^k_{\mcC^{\alpha}}.
\end{equation}
Therefore, applying \eqref{eq:HeatFlow}, \eqref{eq:BesovDerivative} for any $s<t\in (0,T\wedge 1]$ we have
\begin{equation*}
\begin{aligned}
\|e^{(t-s)\Delta}\partial_x ((w_s+Z_s)^2\partial_x \rho_{w_s+Z_s})\|_{\mcC^{\alpha}}&\lesssim (t-s)^{-\frac{1}{2}}\| (w_s+Z_s)^2\partial_x \rho_{w_s+Z_s}\|_{\mcC^{\alpha}}\\
&\lesssim (t-s)^{-\frac{1}{2}}s^{-3\eta}\max_{k=0,\ldots,3}\left\{\|w\|^{3-k}_{C_{\eta;T}\mcC^{\alpha}}\|Z\|^{k}_{C_T\mcC^{\alpha}} \right\}.
\end{aligned}
\end{equation*}
Since $\eta <\frac{1}{4}$ we may integrate $s^{-3\eta}$ near $0$ and so for $t \in (0,T\wedge 1]$ we have
\begin{equation}\label{eq:PsiGenBnd}
t^\eta\|(\Psi w)_t\|_{\mcC^{\alpha}}\lesssim t^{\eta-\frac{\alpha-\alpha_0}{2}}\|\zeta\|_{\mcC^{\alpha_0}}+t^{\frac{1}{2}-2\eta}\max_{k=0,\ldots,3}\left\{\|w\|^{3-k}_{C_{\eta;T}\mcC^{\alpha}}\|Z\|^{k}_{C_T\mcC^{\alpha}} \right\}.
\end{equation}
where both exponents are positive due to \eqref{eq:exponentCriteria} and the norms on the right hand side are finite by assumption. For $t>1$ one may argue in almost exactly the same way, only splitting the time integral at $t=1$ and replacing the multiplication by $t^\eta$ with $(t\wedge 1)^\eta$. 
\end{proof}
\begin{definition}[Mild Solutions to \eqref{eq:1dRepRemainder}]\label{def:1dRemainderMildSols}
We say that $w\in C_{\eta;T}\mcC^\alpha$ is a mild solution to \eqref{eq:1dRepRemainder} on $[0,T]$ (started from $\zeta$ and driven by $Z$)
if for every $t\in (0,T]$,
\begin{equation}\label{eq:1dRemMildSol}
w_t = e^{t\Delta}\zeta+\int_0^t e^{(t-s)\Delta}\chi\partial_x  \left( (w_s+Z_s)^2\partial_x \rho_{w_s+Z_s} \right)\,\dd s.
\end{equation}
\end{definition}

\begin{remark}
Lemma \ref{lem:PsiWellDefined} demonstrates that for any solution, the right hand side of \eqref{eq:1dRemMildSol} is well-defined.
\end{remark}
\begin{theorem}[Local Well-Posedness of \eqref{eq:1dRepRemainder}]\label{th:1dRemainderLWP}
Let $\mfR\geq1$ be such that
$\|Z\|_{C_T\mcC^{\alpha}}^3 + \|\zeta\|_{\mcC^{\alpha_0}(\mbT)}<\mfR$.
Then there exists $C>0$, independent of $\mfR$, $\zeta$, and $Z$,
such that \eqref{eq:1dRepRemainder} has a unique mild solution $w \in C_{\eta;T_*}\mcC^\alpha$
where
\begin{equation}\label{eq:1dT(R)Def}
T_* = \left(\frac{1}{C\mfR}\right)^{\frac{1}{\theta}}\wedge T, \,\, \text{ with }\,\,\theta:=\left(\eta -\frac{\alpha-\alpha_0}{2}\right)\wedge \left(\frac{1}{2}-2\eta\right).
\end{equation}
Furthermore
\begin{equation}\label{eq:bound_by_1}
\sup_{t \in (0,T_*]} t^\eta \|w_t\|_{\mcC^{\alpha}} \leq 1
\end{equation}
and
\begin{equation}\label{eq:conv_to_zeta}
\lim_{t\to0} \|w_t-\zeta\|_{\mcC^{\alpha_0}}=0.
\end{equation}
\end{theorem}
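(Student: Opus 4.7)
\textbf{Proof plan (Theorem \ref{th:1dRemainderLWP}).} The plan is to apply the Banach fixed point theorem to the map $\Psi$ introduced in Lemma~\ref{lem:PsiWellDefined} on a suitable closed ball in the Banach space $C_{\eta;T_*}\mcC^{\alpha}$. Concretely, for $T_*\in(0,T]$ to be chosen, I would set
\begin{equation*}
 B_{T_*}:=\Bigl\{w\in C_{\eta;T_*}\mcC^{\alpha}\,:\,\sup_{t\in(0,T_*]} t^\eta\|w_t\|_{\mcC^{\alpha}}\le 1\Bigr\},
\end{equation*}
which is a complete metric space under the $C_{\eta;T_*}\mcC^{\alpha}$ norm, and verify that for $T_*$ small (depending only on $\mfR$) the map $\Psi\colon B_{T_*}\to B_{T_*}$ is a strict contraction. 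The bound \eqref{eq:bound_by_1} will then be automatic from the fixed point lying in $B_{T_*}$.

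\textbf{Self-map property.} For $w\in B_{T_*}$, the estimate \eqref{eq:PsiGenBnd} gives, with $\theta$ as in \eqref{eq:1dT(R)Def},
\begin{equation*}
 t^\eta\|(\Psi w)_t\|_{\mcC^{\alpha}}\le C_1 T_*^{\eta-(\alpha-\alpha_0)/2}\|\zeta\|_{\mcC^{\alpha_0}}+C_2 T_*^{1/2-2\eta}\max_{k=0,\ldots,3}\|w\|^{3-k}_{C_{\eta;T_*}\mcC^{\alpha}}\|Z\|^k_{C_T\mcC^{\alpha}}.
\end{equation*}
Since $\|w\|_{C_{\eta;T_*}\mcC^{\alpha}}\le 1$ and $\mfR\ge 1$, the max is bounded by $1+\|Z\|^3_{C_T\mcC^\alpha}\lesssim\mfR$, so the whole right-hand side is bounded by $C T_*^{\theta}\mfR$ for some universal $C$. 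Choosing $T_*$ as in \eqref{eq:1dT(R)Def} with $C$ large enough makes this at most $1$, giving $\Psi(B_{T_*})\subset B_{T_*}$.

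\textbf{Contraction.} For $w^1,w^2\in B_{T_*}$ I would use the factorisation
\begin{equation*}
 (w^1+Z)^2\partial_x\rho_{w^1+Z}-(w^2+Z)^2\partial_x\rho_{w^2+Z}=(w^1-w^2)(w^1+w^2+2Z)\partial_x\rho_{w^1+Z}+(w^2+Z)^2\partial_x\rho_{w^1-w^2},
\end{equation*}
where linearity of $f\mapsto\rho_f$ (on the mean-zero component) is used for the second term. Applying Theorem~\ref{th:BesovProduct} and the elliptic estimate \eqref{eq:BesovElliptic} block-by-block, and then mimicking the argument in Lemma~\ref{lem:PsiWellDefined}, I obtain
\begin{equation*}
 \sup_{t\in(0,T_*]}t^\eta\|(\Psi w^1-\Psi w^2)_t\|_{\mcC^{\alpha}}\le C_3 T_*^{1/2-2\eta}\bigl(1+\|Z\|^2_{C_T\mcC^\alpha}\bigr)\|w^1-w^2\|_{C_{\eta;T_*}\mcC^{\alpha}}.
\end{equation*}
Since $\|Z\|^2_{C_T\mcC^\alpha}\le \mfR^{2/3}\le\mfR$, shrinking $T_*$ (still of the form $(C\mfR)^{-1/\theta}\wedge T$ after possibly enlarging $C$) makes the prefactor at most $1/2$, so $\Psi$ is a contraction and the Banach fixed point theorem produces the unique solution $w\in B_{T_*}$.

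\textbf{Convergence to initial data.} Finally, \eqref{eq:conv_to_zeta} follows by splitting $w_t-\zeta=(e^{t\Delta}\zeta-\zeta)+\int_0^t e^{(t-s)\Delta}\chi\partial_x((w_s+Z_s)^2\partial_x\rho_{w_s+Z_s})\dd s$. The nonlinear integral is estimated in $\mcC^{\alpha_0}$ (using $\alpha>\alpha_0$) by essentially the same chain of inequalities as in Lemma~\ref{lem:PsiWellDefined}, but now with a strictly positive power of $t$ in front, so it vanishes as $t\to 0$. The heat-semigroup part converges to zero by strong continuity of $(e^{t\Delta})_{t\ge 0}$ at $\zeta\in\mcC^{\alpha_0}$. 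The main technical obstacle is the algebra of the contraction estimate: organising the cubic difference so that exactly two factors of $\mcC^{\alpha}$-norm appear on the right, ensuring the power of $\|Z\|$ never exceeds $2$ and can therefore be absorbed into $\mfR$ via the cubic bound $\|Z\|^3\le\mfR$; the time-regularity bookkeeping and the check that both exponents exceed $\theta$ are routine but need care.
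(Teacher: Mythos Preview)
Your fixed-point argument for existence and the bound \eqref{eq:bound_by_1} matches the paper's approach. However, there are two omissions.

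First, the Banach fixed point theorem only gives uniqueness within $B_{T_*}$, not in all of $C_{\eta;T_*}\mcC^\alpha$ as the theorem asserts. The paper supplies a short extra argument: given another mild solution $\tilde w \in C_{\eta;T_*}\mcC^\alpha$, one finds $\tilde T\in(0,T_*]$ (depending on both $\|w\|_{C_{\eta;T_*}\mcC^\alpha}$ and $\|\tilde w\|_{C_{\eta;T_*}\mcC^\alpha}$) small enough that both lie in $B_{\tilde T}$, concludes $w=\tilde w$ on $[0,\tilde T]$ by uniqueness of the fixed point there, and iterates with the same $\tilde T$ to cover $[0,T_*]$.

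Second, and more substantively, your argument for \eqref{eq:conv_to_zeta} has a genuine gap. Estimating the integral term in $\mcC^{\alpha_0}$ using $\|w_s\|_{\mcC^\alpha}\le s^{-\eta}$ yields
\[
\int_0^t (t-s)^{-\frac{1+\alpha_0-\alpha}{2}}s^{-3\eta}\,\dd s \;\sim\; t^{\frac12+\frac{\alpha-\alpha_0}{2}-3\eta},
\]
and the exponent $\frac12+\frac{\alpha-\alpha_0}{2}-3\eta$ is \emph{not} guaranteed to be positive under \eqref{eq:exponentCriteria}: for instance take $\frac{\alpha-\alpha_0}{2}=0.05$ and $\eta=0.2$. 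The paper's fix is to pick $\tilde\eta\in\bigl(\tfrac{\alpha-\alpha_0}{2},\eta\bigr)$ and observe that, since $C_{\tilde\eta;S}\mcC^\alpha\hookrightarrow C_{\eta;S}\mcC^\alpha$, the already-constructed $w$ is also the unique mild solution in $C_{\tilde\eta;S}\mcC^\alpha$ on a short interval $[0,S]$, with $s^{\tilde\eta}\|w_s\|_{\mcC^\alpha}\le1$ there. Repeating the integral estimate with $\tilde\eta$ in place of $\eta$ gives exponent $\frac12+\frac{\alpha-\alpha_0}{2}-3\tilde\eta$, whose limit as $\tilde\eta\downarrow\frac{\alpha-\alpha_0}{2}$ is $\frac12-(\alpha-\alpha_0)>0$; a suitable choice of $\tilde\eta$ thus makes it strictly positive and the integral term vanishes as $t\to0$.
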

\begin{proof}
Denoting
\begin{equation*}
B_{T_*} := \Big\{ w \in C((0,T_*];\mcC^{\alpha}(\mbT))\,:\,
\sup_{t \in (0,T_*]}t^\eta\|w_t\|_{\mcC^{\alpha}}\leq  1\Big\},
\end{equation*}
we will show that 
\begin{equation*}
w \mapsto (\Psi w)_t:=e^{t\Delta}\zeta+\int_0^t e^{(t-s)\Delta}\chi\partial_x \left( (w_s+Z_s)^2\partial_x \rho_{w_s+Z_s} \right)\,\dd s,
\end{equation*}
is a contraction on $B_{T_*}$ for $T_*$ defined by~\eqref{eq:1dT(R)Def} for $C>0$ sufficiently large.
By \eqref{eq:PsiGenBnd}, for $w\in B_{T_*}$ and $t\in (0,T_*]$, there exists $C>0$ such that
\begin{align*}
t^\eta\|(\Psi w)_t\|_{\mcC^{\alpha}} &\leq C t^{\left(\eta -\frac{\alpha-\alpha_0}{2}\right)\wedge \left(\frac{1}{2}-2\eta\right)}\mfR,
\end{align*}
and so $\Psi$ maps $B_{T_*}$ into itself for $T_*$ defined by~\eqref{eq:1dT(R)Def}.
To show that $\Psi$ is a contraction we let $w,\,\tilde{w} \in B_{T_*}$. For any $s\in (0,T_*]$, using similar steps to those in the proof of \eqref{eq:PsiGenBnd}, we have that
\begin{align*}
\|(w_s+Z_s)^2\partial_x \rho_{w_s+Z_s}-
(\tilde{w}_s+Z_s)^2&\partial_x \rho_{\tilde{w}_s+Z_s}\|_{\mcC^{\alpha}}\\ & \lesssim \|w_s-\tilde{w}_s\|_{\mcC^{\alpha}}\sum_{k=0}^2 \binom{2}{k} \left(\|w_s\|_{\mcC^{\alpha}}\vee\|\tilde{w}_s\|_{\mcC^{\alpha}}\right)^{2-k}\|Z_s\|^{k}_{\mcC^{\alpha}}\\
&\lesssim
\mfR s^{-2\eta}\|w_s-\tilde{w}_s\|_{\mcC^{\alpha}}.
\end{align*}
So then, for any $t \in (0,T_*]$, we have
\begin{align*}
t^\eta\|(\Psi w)_t-(\Psi \tilde{w})_t\|_{\mcC^{\alpha}}
&\leq t^\eta\int_0^t \|e^{(t-s)\Delta} \chi\partial_x
\big(
(w_s+Z_s)^2\partial_x \rho_{w_s+Z_s}\\
&\qquad\qquad\qquad\qquad\qquad\qquad -(\tilde{w}_s+Z_s)^2\partial_x \rho_{\tilde{w}_s+Z_s}
\big)
\|_{\mcC^{\alpha}}\,\dd s
\\
&\lesssim t^\eta\int_0^t (t-s)^{-\frac{1}{2}}\|(w_s+Z_s)^2\partial_x \rho_{w_s+Z_s}-(\tilde{w}_s+Z_s)^2\partial_x \rho_{\tilde{w}_s+Z_s}\|_{\mcC^{\alpha}}\,\dd s
\\
&\lesssim \mfR t^{\frac{1}{2}-2\eta}\|w-\tilde{w}\|_{C_{\eta;t}\mcC^{\alpha}}.
\end{align*}
It follows that there exists $C>0$ such that, for $T_*$ given by~\eqref{eq:1dT(R)Def},
\begin{align*}
\|\Psi w -\Psi \tilde{w}\|_{C_{\eta;T_*}\mcC^\alpha}\, &\leq C\mfR (T_*)^{\frac{1}{2}-2\eta}\| w - \tilde{w}\|_{C_{\eta;T_*}\mcC^\alpha}\\
&\leq \frac12 \| w - \tilde{w}\|_{C_{\eta;T_*}\mcC^\alpha}.
\end{align*}
Hence $\Psi$ is a contraction on $B_{T_*}$,
and therefore there exists a unique fixed point $w \in B_{T_*}$ of $\Psi$ which, by construction, is a mild solution to \eqref{eq:1dRepRemainder} in the sense of Definition \ref{def:1dRemainderMildSols}
and satisfies~\eqref{eq:bound_by_1}.

To show that $w$ is the unique solution in all of $C_{\eta;T_*}\mcC^{\alpha}$, let $\tilde w$ be another mild solutions of \eqref{eq:1dRepRemainder}.
Then, by a similar argument to the above, there exists a $\tilde{T}(\|w\|_{C_{\eta;T_*}\mcC^\alpha},\|\tilde{w}\|_{C_{\eta;T_*}\mcC^\alpha})=:\tilde{T}\in (0,T_*]$ such that $w,\tilde{w}\in B_{\tilde{T}}$. Since both must be fixed points of $\Psi$ on $B_{\tilde{T}}$ we have that $w=\tilde{w}$ on $[0,\tilde{T}]$. Iterating the argument, using the same $\tilde{T}$ at each step, shows that $w=\tilde{w}$ on $[0,T_*]$.

To show~\eqref{eq:conv_to_zeta}, observe first that $\lim_{t\to0}\|e^{t\Delta}\zeta-\zeta\|_{\mcC^{\alpha_0}}=0$ by Remark~\ref{rem:HeatSemiGroupContAtZero} and so it only remains to show that the integral term converges to zero in $\mcC^{\alpha_0}$.
Consider $\tilde{\eta} \in \left(\frac{\alpha-\alpha_0}{2},\eta\right)$.
Since $C_{\tilde{\eta};T_*}\mcC^{\alpha} \hookrightarrow C_{\eta;T_*}\mcC^{\alpha}$,
applying what we have proved so far to $(\alpha_0,\alpha,\tilde\eta)$ in place of $(\alpha_0,\alpha,\eta)$,
we see that, for $S>0$ sufficiently small,
$w$ is also the unique mild solution to~\eqref{eq:1dRepRemainder} in $C_{\tilde\eta;S}\mcC^\alpha$,
and that
$\sup_{t\in(0,S]}t^{\tilde\eta}\|w_t\|_{\mcC^\alpha} \leq 1$.
Applying \eqref{eq:HeatFlow} and \eqref{eq:1dNonLinearBounds}, for all $t\in (0,S]$
\begin{align*}
    \int_0^t \|e^{(t-s)\Delta} \partial_x((w_s+Z_s)^2\partial_x\rho_{w_s+Z_s}\|_{\mcC^{\alpha_0}}\dd s &\lesssim \int_0^t (t-s)^{-\frac{\alpha_0 -\alpha+1}{2}} s^{-3\tilde{\eta}} \dd s\\
    &= t^{\frac{1}{2}+\frac{\alpha-\alpha_0}{2}-3\tilde{\eta}},
\end{align*}
where we used the fact that $\frac{\alpha_0-\alpha+1}{2}\vee 3\tilde{\eta}<1$ to evaluate the integral. We now choose $\tilde{\eta}$ sufficiently close to $\frac{\alpha-\alpha_0}{2}$ so that $\frac{1}{2}+\frac{\alpha-\alpha_0}{2}-3\tilde{\eta}>0$, from which~\eqref{eq:conv_to_zeta}
follows.
\end{proof}
\begin{lemma}\label{lem:spatial_reg}
Suppose that $w\in C_{\eta;T}\mcC^\alpha$ is a mild solution to~\eqref{eq:1dRepRemainder}.
Then for all $t_0\in(0,T)$, $\beta\in (\alpha,\alpha+1)$ and $\kappa \in (0,1)$,
\begin{equation}\label{eq:1dRemainderTimeHolder}
\sup_{t\,\neq\, s\, \in [t_0,T]}\frac{\|w_t-w_s
\|_{\mcC^{\beta-2\kappa}}}{|t-s|^{\kappa}}<\infty.
\end{equation}
\end{lemma}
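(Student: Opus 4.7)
The plan is a two-step bootstrap: first upgrade the spatial regularity of $w$ on the closed sub-interval $[t_0,T]$, and then read off the time-H\"older estimate from the mild equation by the usual semigroup difference decomposition.

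\textbf{Step 1 (spatial bootstrap).} Writing $F_r := (w_r+Z_r)^2\partial_x\rho_{w_r+Z_r}$ and using the mild equation
\begin{equation*}
w_t = e^{t\Delta}\zeta + \int_0^t e^{(t-r)\Delta}\chi\partial_x F_r\,\dd r,
\end{equation*}
I want to show that $\sup_{t\in[t_0,T]}\|w_t\|_{\mcC^{\beta}}<\infty$ for any $\beta\in(\alpha,\alpha+1)$. Estimate \eqref{eq:HeatFlow} applied to $\zeta\in\mcC^{\alpha_0}$ controls the initial-datum term by $t_0^{-(\beta-\alpha_0)/2}\|\zeta\|_{\mcC^{\alpha_0}}$ uniformly for $t\geq t_0$. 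For the integral term, the pointwise bound \eqref{eq:1dNonLinearBounds} yields $\|F_r\|_{\mcC^\alpha}\lesssim r^{-3\eta}(\|w\|_{C_{\eta;T}\mcC^\alpha}^3+\|Z\|_{C_T\mcC^\alpha}^3)$, and combining \eqref{eq:HeatFlow} with \eqref{eq:BesovDerivative} gives
\begin{equation*}
\|e^{(t-r)\Delta}\partial_x F_r\|_{\mcC^{\beta}}\lesssim (t-r)^{-(\beta-\alpha+1)/2}\|F_r\|_{\mcC^{\alpha}}.
\end{equation*}
The hypothesis $\beta<\alpha+1$ ensures $(\beta-\alpha+1)/2<1$ and $3\eta<3/4<1$, so the time convolution is absolutely convergent uniformly for $t\in[t_0,T]$.

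\textbf{Step 2 (time H\"older estimate).} For $t_0\le s<t\le T$ the mild equation has the semigroup form
\begin{equation*}
w_t-w_s = \bigl(e^{(t-s)\Delta}-I\bigr)w_s + \int_s^t e^{(t-r)\Delta}\chi\partial_x F_r\,\dd r.
\end{equation*}
For the first piece I will invoke the standard Besov estimate $\|(e^{\tau\Delta}-I)f\|_{\mcC^{\beta-2\kappa}}\lesssim \tau^\kappa\|f\|_{\mcC^\beta}$ (valid for $\kappa\in(0,1)$ and proved by Littlewood--Paley using $|e^{-\tau 4^j}-1|\lesssim(\tau 4^j)^\kappa$), together with the bound from Step 1, to obtain a contribution of order $(t-s)^\kappa$. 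For the second piece, \eqref{eq:HeatFlow} and \eqref{eq:BesovDerivative} give $\|e^{(t-r)\Delta}\partial_x F_r\|_{\mcC^{\beta-2\kappa}}\lesssim (t-r)^{-((\beta-2\kappa-\alpha+1)_+)/2}\|F_r\|_{\mcC^\alpha}$; since $\sup_{r\in[t_0,T]}\|F_r\|_{\mcC^\alpha}<\infty$ and $\beta-2\kappa-\alpha+1<2$, integration over $[s,t]$ produces a power of $(t-s)$ no smaller than $\kappa$ (treating separately the cases $\beta-2\kappa\le\alpha-1$, where the time-integrand is bounded and the contribution is $O(t-s)$, and $\beta-2\kappa>\alpha-1$, where the integral evaluates to $(t-s)^{\kappa+(\alpha+1-\beta)/2}$).

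\textbf{Main obstacle.} The delicate balance sits in Step 1: to afford the $2\kappa$ spatial derivatives spent in the first term of Step 2 while still paying only $(t-s)^\kappa$ in time, I must first upgrade $w_t$ all the way to $\mcC^\beta$ for $\beta$ arbitrarily close to $\alpha+1$. This is exactly the endpoint permitted by the integrability threshold $(\beta-\alpha+1)/2<1$ for the $\partial_x$-loss recovered by the heat semigroup, and it is what forces the stated restriction $\beta<\alpha+1$ in the lemma.
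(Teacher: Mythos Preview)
Your proposal is correct and follows essentially the same two-step approach as the paper: first bootstrap $w$ into $C([t_0,T];\mcC^\beta)$ via the heat-semigroup smoothing of both $e^{t\Delta}\zeta$ and the Duhamel integral (using $(\beta-\alpha+1)/2<1$ and $3\eta<1$), then split $w_t-w_s=(e^{(t-s)\Delta}-I)w_s+\int_s^t e^{(t-r)\Delta}\chi\partial_x F_r\,\dd r$ and estimate each piece. The only cosmetic difference is that for the integral piece the paper first embeds $F_r\in\mcC^\alpha\hookrightarrow\mcC^{\beta-1}$ and then pays exactly $(t-r)^{-(1-\kappa)}$, whereas you keep $F_r$ in $\mcC^\alpha$ and obtain the slightly sharper exponent $(t-s)^{\kappa+(\alpha+1-\beta)/2}$; both routes give \eqref{eq:1dRemainderTimeHolder}.
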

\begin{proof}
Applying \eqref{eq:HeatFlow}
gives
\begin{equation}\label{eq:1dRemainderHigherRegInitialCond}
\|e^{t\Delta}\zeta\|_{\mcC^\beta}\lesssim t^{-\frac{\beta-\alpha_0}{2}}\|\zeta\|_{\mcC^{\alpha_0}}.
\end{equation}
Regarding the integral term, applying similar steps to those in the proof of \eqref{eq:PsiGenBnd} and the assumption $\frac{1+\beta-\alpha}{2}<1$,
\begin{align}
\int_{0}^t\|e^{(t-s)\Delta}\partial_x \left( (w_s+Z_{s})^2\partial_x \rho_{w_s+Z_s}\right)\|_{\mcC^{\beta}}\,\dd s
&\lesssim \int_{0}^t (t-s)^{-\frac{1+\beta-\alpha}{2}}s^{-3\eta}\,\dd s \notag \\
&= t^{\frac{1}{2}-\frac{\beta-\alpha}{2}-3\eta} \label{eq:1dRemainderHigherRegIntegral}.
\end{align}
It follows that
\begin{equation*}
\sup_{t \in [t_0,T_*]} \|w_t\|_{\mcC^{\beta}} <\infty.
\end{equation*}
Now, let $0<t_0<s<t\leq T$ and $\kappa \in [0,1)$. Applying the triangle inequality and using the semi-group property, we have that
\begin{equation*}
\begin{aligned}
\|w_t-w_s\|_{\mcC^{\beta-2\kappa}} &\leq \left\|(e^{(t-s)-1)\Delta}w_{s}\right\|_{\mcC^{\beta-2\kappa}} \\
&\quad + \int_{s}^{t} \left\|e^{(t-r)\Delta}\partial_x ((w_{r}+Z_{r})^2\partial_x \rho_{w_{r}+Z_{r}})\right\|_{\mcC^{\beta-2\kappa}} \,\dd r\\
&=: I_1(s,t)+\int_s^{t} I_2(r,t)\,\dd r.
\end{aligned}
\end{equation*} 
Using~\eqref{eq:HeatFlow} and~\eqref{eq:HeatFlowMinusId},
\begin{align}
I_1(s,t) \lesssim\left\|\left(e^{(t-s)\Delta} - 1\right)w_{s}\right\|_{\mcC^{\beta-2\kappa}}   \lesssim (t-s)^{\kappa}\|w_{s}\|_{\mcC^\beta}\label{eq:1dRemainderTimeHolderInitial}.
\end{align}
Since $\beta-1<\alpha$ and $\sup_{t \in [t_0,T_*]}\|Z_t\|_{\mcC^\alpha}+\|w_t\|_{\mcC^{\alpha}}<\infty$,
applying \eqref{eq:HeatFlow} followed by \eqref{eq:HeatFlowMinusId} and \eqref{eq:1dNonLinearBounds},
we obtain
\begin{align*}
I_2(r,t) \lesssim (t-r)^{-\frac{1+\beta-2\kappa-(\beta-1)}{2}}\left\| ((w_{r}+Z_{r})^2\partial_x \rho_{w_{r}+Z_{r}})\right\|_{\mcC^{\beta-1}}\lesssim (t-r)^{-1+\kappa}.
\end{align*}
Therefore
\begin{equation}\label{eq:1dRemainderTimeHolderIntegral1}
\int_s^{t} I_2(r,t)\,\dd r \lesssim (t-s)^{\kappa}.
\end{equation}
Combining~\eqref{eq:1dRemainderTimeHolderInitial} and~\eqref{eq:1dRemainderTimeHolderIntegral1} we obtain~\eqref{eq:1dRemainderTimeHolder}.
\end{proof}
\begin{lemma}[Mild Solutions are Weak Solutions] \label{lem:MildSolsAreWeakSols}
Let $t_0 \in [0,T)$ and $w\in C_{\eta;T}\mcC^{\alpha}$ be a mild solution to \eqref{eq:1dRepRemainder} on $[0,T]$. Then for any $\phi \in C^\infty(\mbT)$ and $t \in [t_0,T]$,
\begin{equation}\label{eq:weak_sols}
\langle w_t,\phi\rangle - \langle w_{t_0},\phi\rangle = -\int_{t_0}^t \langle \partial_x w_s
+\chi(w_s+Z_s)^2\partial_x \rho_{w_s+Z_s},\partial_x \phi\rangle\,\dd s.
\end{equation}
Moreover,~\eqref{eq:weak_sols} holds for all $t_0\in(0,T)$ and $\phi\in C^1(\mbT)$.
\end{lemma}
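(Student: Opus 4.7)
The plan is to derive the weak identity~\eqref{eq:weak_sols} directly from the mild formulation~\eqref{eq:1dRemMildSol} by pairing against $\phi$, exploiting the self-adjointness of the heat semigroup $\langle e^{r\Delta}f,\phi\rangle = \langle f,e^{r\Delta}\phi\rangle$, and using the fact that on smooth test functions $\partial_r e^{r\Delta}\phi = \Delta e^{r\Delta}\phi$. Setting $g(s) := \chi\partial_x\bigl((w_s+Z_s)^2\partial_x\rho_{w_s+Z_s}\bigr)$, Lemma~\ref{lem:PsiWellDefined} together with~\eqref{eq:bound_by_1} shows $s\mapsto\|g(s)\|_{\mcC^{\alpha-1}}$ is integrable near zero, so all the time integrals that appear are absolutely convergent and Fubini applies freely.

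For $\phi\in C^\infty(\mbT)$, I first pair the mild formulation at times $t$ and $t_0$ with $\phi$, giving
\begin{align*}
\langle w_t-w_{t_0},\phi\rangle &= \langle \zeta,(e^{t\Delta}-e^{t_0\Delta})\phi\rangle + \int_{t_0}^{t}\langle g(s),e^{(t-s)\Delta}\phi\rangle\,\dd s \\
&\quad + \int_{0}^{t_0}\langle g(s),(e^{(t-s)\Delta}-e^{(t_0-s)\Delta})\phi\rangle\,\dd s.
\end{align*}
Writing each semigroup difference as $\int_{t_0}^{t}\Delta e^{(r-s)\Delta}\phi\,\dd r$ (with $s=0$ in the first term), interchanging the orders of integration via Fubini, and inserting the identity $e^{(t-s)\Delta}\phi = \phi + \int_{s}^{t}\Delta e^{(r-s)\Delta}\phi\,\dd r$ in the middle integral, I regroup terms according to the outer $r$-variable. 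After rearrangement the result is
\begin{equation*}
\langle w_t-w_{t_0},\phi\rangle = \int_{t_0}^{t}\Bigl\{\langle \zeta,e^{r\Delta}\Delta\phi\rangle + \int_{0}^{r}\langle g(s),e^{(r-s)\Delta}\Delta\phi\rangle\,\dd s + \langle g(r),\phi\rangle\Bigr\}\,\dd r,
\end{equation*}
and the expression in braces is exactly $\langle w_r,\Delta\phi\rangle + \langle g(r),\phi\rangle$ by the mild formulation paired with $\Delta\phi$. Two integrations by parts on the torus, which are legitimate since $\phi$ is smooth and $w_r,\,(w_r+Z_r)^2\partial_x\rho_{w_r+Z_r}$ are well-defined distributions by Lemma~\ref{lem:PsiWellDefined}, convert $\langle w_r,\Delta\phi\rangle = -\langle\partial_x w_r,\partial_x\phi\rangle$ and $\langle g(r),\phi\rangle = -\chi\langle (w_r+Z_r)^2\partial_x\rho_{w_r+Z_r},\partial_x\phi\rangle$, yielding~\eqref{eq:weak_sols}.

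To extend the identity to $\phi\in C^1(\mbT)$ under the stronger hypothesis $t_0\in(0,T)$, I invoke Lemma~\ref{lem:spatial_reg} applied with any $\beta\in(1,\alpha+1)$: on $[t_0,T]$ the map $s\mapsto w_s$ lives in a bounded subset of $\mcC^\beta$, and similarly $(w_s+Z_s)^2\partial_x\rho_{w_s+Z_s}\in\mcC^\alpha$ uniformly in $s\in[t_0,T]$. Consequently both integrands in~\eqref{eq:weak_sols} make sense for any $\phi\in C^1$, and the result follows by approximating $\phi$ by smooth functions $\phi_n\to\phi$ in $C^1$ (e.g.\ via mollification on $\mbT$) and passing to the limit: the left-hand side converges by continuity of the pairing against $w_t, w_{t_0}$, while the right-hand side converges by uniform convergence of $\partial_x\phi_n$ together with the aforementioned uniform bounds.

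The only delicate step is the rearrangement after inserting the two semigroup-difference identities; the book-keeping of the four resulting double integrals has to be done carefully to recognise that the Fubini-swapped sum reassembles into the pairing $\langle w_r,\Delta\phi\rangle$ via the mild formula. Beyond that the argument is routine: absolute integrability of $s^{-3\eta}$ near zero, guaranteed by~\eqref{eq:exponentCriteria}, handles every application of Fubini, and the smoothness of $\phi$ (respectively the extra regularity of $w_r$ away from $0$) justifies every integration by parts.
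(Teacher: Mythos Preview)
Your proof is correct and follows essentially the same strategy as the paper: both derive the weak identity from the mild formulation by exploiting self-adjointness of $e^{r\Delta}$, the identity $\partial_r e^{r\Delta}\phi = \Delta e^{r\Delta}\phi$, and Fubini, then extend to $\phi\in C^1(\mbT)$ for $t_0>0$ via Lemma~\ref{lem:spatial_reg} and density.

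The organisation differs slightly. The paper computes $\int_{t_0}^t \langle w_s,\partial_{xx}\phi\rangle\,\dd s$ by inserting the mild formula \emph{restarted from $w_{t_0}$ at time $t_0$}, and then recognises the difference $\langle w_t,\phi\rangle-\langle w_{t_0},\phi\rangle$ after integrating in time. You instead subtract the mild formulas for $w_t$ and $w_{t_0}$, both started from $\zeta$ at time $0$, and reassemble the resulting semigroup differences into $\int_{t_0}^t\langle w_r,\Delta\phi\rangle\,\dd r$. Your route avoids invoking the restarted mild formulation, which is a mild convenience. One small remark: your appeal to~\eqref{eq:bound_by_1} is unnecessary, since the hypothesis $w\in C_{\eta;T}\mcC^\alpha$ together with the estimate~\eqref{eq:1dNonLinearBounds} already gives $\|g(s)\|_{\mcC^{\alpha-1}}\lesssim s^{-3\eta}$ on all of $(0,T]$, which suffices for every Fubini step.
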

\begin{proof}
From the mild form of the equation started from data $w_{t_0}$ at $t=t_0$, it follows that for $\phi \in C^{\infty}(\mbT)$
\begin{align*}
\int_{t_0}^t \langle w_s,\partial_{xx} \phi\rangle \,\dd s
&= \int_{t_0}^t \left \langle e^{s\Delta}w_{t_0},\partial_{xx} \phi\right\rangle\,\dd s
\\
&\qquad\qquad
+ \int_{t_0}^t \int_{t_0}^s\left\langle e^{(s-r)\Delta} \chi\partial_x ((w_r+Z_r)^2\partial_x \rho_{w_r+Z_r}),\partial_{xx} \phi \right\rangle \,\dd r\,\dd s.
\end{align*}
Integrating by parts in the first term on the right hand side gives 
\begin{equation}\label{eq:InitialConditionWeakForm}
\int_{t_0}^t \left \langle e^{s\Delta}w_{t_0},\partial_{xx} \phi\right\rangle\,\dd s = \int_{t_0}^t \langle \partial_{xx} e^{s\Delta}w_{t_0},\phi\rangle\,\dd s  = \langle e^{t\Delta}w_{t_0},\phi\rangle -\langle w_{t_0},\phi\rangle .
\end{equation}
Using the same argument for the second term on the right hand side we have that
\begin{equation*}
\left\langle e^{(s-r)\Delta} \partial_x ((w_r+Z_r)^2\partial_x \rho_{w_r+Z_r}),\partial_{xx} \phi \right\rangle = \partial_s \left\langle e^{(s-r)\Delta} \partial_x ((w_r+Z_r)^2\partial_x \rho_{w_r+Z_r}), \phi \right\rangle.
\end{equation*}
Changing the order of integration gives
\begin{align*}
\int_{t_0}^t\int_{t_0}^s\langle e^{(s-r)\Delta} \partial_x ((w_r+Z_r)^2&\partial_x \rho_{w_r+Z_r}),\partial_{xx} \phi \rangle\,\dd r\,\dd s\\
&=\int_{t_0}^t \langle e^{(t-r)\Delta} \partial_x ((w_t+Z_t)^2\partial_x \rho_{w_t+Z_t}),\phi\rangle\,\dd r\\
&\quad \quad- \int_{t_0}^t \langle \partial_x ((w_r+Z_r)^2\partial_x \rho_{w_r+Z_r}),\phi\rangle\,\dd r.
\end{align*}
Putting this together with \eqref{eq:InitialConditionWeakForm} gives that
\begin{align*}
\int_{t_0}^t \langle w_s,\partial_{xx} \phi\rangle \,\dd s &= \langle e^{t\Delta}w_{t_0},\phi\rangle -\langle w_{t_0},\phi\rangle 
+
\int_{t_0}^t \langle e^{(t-r)\Delta} 
\chi\partial_x ((w_r+Z_r)^2\partial_x \rho_{w_r+Z_r}),\phi\rangle\,\dd r\\
&\quad -
\int_{t_0}^t \langle
\chi\partial_x ((w_r+Z_r)^2\partial_x \rho_{w_r+Z_r}),\phi\rangle\,\dd r.
\end{align*}
Rearranging and integrating the left hand side by parts once proves~\eqref{eq:weak_sols} for any $\phi \in C^\infty(\mbT)$.
For $t_0\in(0,T)$,
by Theorem \ref{th:1dRemainderLWP},
we have that $\sup_{t\in [t_0,T]}\|w_t\|_{C^1}<\infty$, so that the map $C^1(\mbT)\ni \phi \mapsto \int_{t_0}^t \langle \partial_x w_s + (w_s+Z_s)^2\partial_x\rho_{w_s+Z_s},\partial_x \phi\rangle \dd s$ is continuous.
Approximating $\phi \in C^1(\mbT)$ by a smooth sequence gives the result.
\end{proof}
It follows from the proof of Lemma \ref{lem:MildSolsAreWeakSols} that any solution to \eqref{eq:1dRepRemainder} has constant spatial mean.
\begin{corollary}\label{cor:1dSolConstantMean}
Let $t\in [0,T]$ and $w \in C_{\eta;T}\mcC^{\alpha}$ be a mild solution to \eqref{eq:1dRepRemainder}.
Then $\bar{w}_t = \bar{\zeta}$.
\end{corollary}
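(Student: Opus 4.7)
The plan is to exploit the weak formulation established in Lemma \ref{lem:MildSolsAreWeakSols} together with the short-time continuity \eqref{eq:conv_to_zeta} from Theorem \ref{th:1dRemainderLWP}. Since every term on the right-hand side of \eqref{eq:weak_sols} is paired against $\partial_x \phi$, the natural test function is the constant $\phi \equiv 1 \in C^1(\mbT)$.

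First I would apply \eqref{eq:weak_sols} with $\phi \equiv 1$ for arbitrary $t_0 \in (0,T)$ and $t \in [t_0,T]$. Since $\partial_x 1 \equiv 0$, the integrand on the right vanishes identically and the identity collapses to
\begin{equation*}
\bar{w}_t = \langle w_t,1\rangle = \langle w_{t_0},1\rangle = \bar{w}_{t_0}.
\end{equation*}
Hence the map $t \mapsto \bar{w}_t$ is constant on $(0,T]$.

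To identify this constant as $\bar{\zeta}$, I would pass to the limit $t_0 \to 0$. By \eqref{eq:conv_to_zeta}, $w_{t_0} \to \zeta$ in $\mcC^{\alpha_0}(\mbT)$, and since $1 \in C^\infty(\mbT)$ lies in the (topological) dual of $\mcC^{\alpha_0}(\mbT)$, the linear functional $f \mapsto \langle f,1\rangle$ is continuous on this space. Therefore $\bar{w}_{t_0} \to \bar{\zeta}$ as $t_0 \to 0$, and combining with the previous display yields $\bar{w}_t = \bar{\zeta}$ for all $t \in (0,T]$; the case $t=0$ holds by the initial condition.

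There is no genuine obstacle: the argument is an immediate consequence of Lemma \ref{lem:MildSolsAreWeakSols}. The only minor point is that Lemma \ref{lem:MildSolsAreWeakSols} restricts $t_0$ to $(0,T)$ when $\phi \in C^1(\mbT)$, which is precisely why we recover the value at $t=0$ through the limiting procedure afforded by \eqref{eq:conv_to_zeta} rather than by direct substitution.
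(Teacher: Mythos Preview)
Your argument is correct, and it rests on the same observation as the paper's proof: test against $\phi\equiv 1$ so that the right-hand side of \eqref{eq:weak_sols} vanishes. The difference is that you treat $\phi\equiv 1$ as a $C^1(\mbT)$ test function, which forces $t_0\in(0,T)$ and then requires the limiting step via \eqref{eq:conv_to_zeta}. The paper instead notes that $\phi\equiv 1\in C^\infty(\mbT)$, so the first assertion of Lemma~\ref{lem:MildSolsAreWeakSols} applies with $t_0=0$ directly, yielding $\bar w_t=\langle w_t,1\rangle=\langle\zeta,1\rangle=\bar\zeta$ in one line. Your route works but is more circuitous than necessary; the appeal to \eqref{eq:conv_to_zeta} (and implicitly to the uniqueness part of Theorem~\ref{th:1dRemainderLWP} to ensure it applies to the given solution) can be dispensed with entirely.
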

\begin{proof}
Applying Lemma \ref{lem:MildSolsAreWeakSols} with $\phi\equiv 1$ and $t_0=0$ gives $\bar{w}_t=\langle w_t,1\rangle = \langle \zeta,1\rangle =\bar{\zeta}$.
\end{proof}
\section{A Priori Estimate and Global Well-Posedness}\label{sec:1dAPriori}
In this section we make use of the specific sign choice $\chi >0$ in \eqref{eq:1dRepEquation} to obtain an a priori estimate on the solution in Theorem \ref{th:1dRemainderAPriori}.
Throughout this section we fix $T>0$, and a zero spatial mean function $Z \in C_T\mcC^{\alpha}_0$.
\begin{proposition}[Testing the Equation]\label{prop:1dRemainderTest} Let $0<t_0<t\leq T$, $p\geq 2$ an integer, $\zeta\in \mcC^{\alpha_0}(\mbT)$,  and $w(\zeta) \in C_{\eta;T}\mcC^{\alpha}$ be a mild solution to \eqref{eq:1dRepRemainder} on $[0,T]$.
Then
\begin{equation}\label{eq:1dRemainderTestEq}
\begin{aligned}
\frac{1}{p(p-1)}\left(\|w_t\|_{L^p}^p -\|w_{t_0}\|_{L^p}^p\right) &= -\int_{t_0}^t \langle w_s^{p-2}\partial_x w_s,\partial_x w_s\rangle\,\dd s\\
&\quad-\int_{t_0}^t\langle \chi(w_s+Z_s)^2\partial_x \rho_{w_s+Z_s},w_s^{p-2}\partial_x w_s\rangle \,\dd s.
\end{aligned}
\end{equation}
\end{proposition}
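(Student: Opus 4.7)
The identity \eqref{eq:1dRemainderTestEq} is the standard $L^p$ energy estimate arising from testing \eqref{eq:1dRepRemainder} against $w^{p-1}/(p-1)$ and integrating by parts, using $\partial_x(w^{p-1}) = (p-1)w^{p-2}\partial_x w$. Since $w$ is only a mild solution and not a priori differentiable in time, the plan is to exploit spatial smoothing, mollify in space to reduce to a classical computation, and then pass to the limit.

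Fix $0 < t_0 < t \leq T$. Lemma \ref{lem:spatial_reg} applied with $\beta \in (1, 1+\alpha)$ gives $w \in C([t_0, T]; \mcC^\beta(\mbT))$, so $w_s$ is uniformly $C^1(\mbT)$ on $[t_0, T]$. Combined with $Z \in C_T\mcC^\alpha_0$, Theorem \ref{th:BesovProduct} and \eqref{eq:BesovElliptic} yield that $F_s := (w_s+Z_s)^2 \partial_x \rho_{w_s+Z_s}$ belongs to $C([t_0,T]; \mcC^\alpha(\mbT)) \subset C([t_0,T]; L^\infty(\mbT))$. Let $\phi^\eps$ be a smooth symmetric mollifier on $\mbT$ and write $f^\eps := \phi^\eps * f$. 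Testing the weak formulation of Lemma \ref{lem:MildSolsAreWeakSols} against $y \mapsto \phi^\eps(x-y)$ for each fixed $x$ yields, pointwise in $x$,
\begin{equation*}
w^\eps_t(x) - w^\eps_{t_0}(x) = \int_{t_0}^t \partial_{xx} w^\eps_s(x)\,\dd s + \chi \int_{t_0}^t \partial_x F^\eps_s(x)\,\dd s.
\end{equation*}
Since $w^\eps_s$ and $F^\eps_s$ are smooth in $x$ and continuous in $s$, the map $s \mapsto w^\eps_s(x)$ is absolutely continuous; applying the one-variable chain rule to $s \mapsto (w^\eps_s(x))^{p}$, integrating in $x$ by Fubini, and integrating by parts in the spatial terms produces the mollified analogue of \eqref{eq:1dRemainderTestEq}:
\begin{equation*}
\tfrac{1}{p(p-1)}\bigl(\|w_t^\eps\|_{L^p}^p - \|w_{t_0}^\eps\|_{L^p}^p\bigr) = -\int_{t_0}^t \langle (w^\eps_s)^{p-2}\partial_x w^\eps_s, \partial_x w^\eps_s\rangle \,\dd s - \chi\int_{t_0}^t \langle F^\eps_s, (w^\eps_s)^{p-2}\partial_x w^\eps_s\rangle\,\dd s.
\end{equation*}

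To pass $\eps \to 0$, observe that $w^\eps \to w$ uniformly in $C([t_0,T]; C^1(\mbT))$ and $F^\eps \to F$ uniformly in $C([t_0,T]; L^\infty(\mbT))$. These uniform convergences render all integrands uniformly bounded and pointwise convergent on $[t_0, t]\times \mbT$, so dominated convergence closes the argument; the endpoint $L^p$ norms also converge since $w^\eps_\cdot \to w_\cdot$ uniformly. The main technical obstacle is the very first step --- establishing enough spatial smoothness on $[t_0, T]$ to justify classical differentiation --- and this is already furnished by Lemma \ref{lem:spatial_reg}; once that is in hand, the remainder is essentially bookkeeping, with some care needed to interpret $w^{p-2}$ when $p$ is odd via the signed convention consistent with the pointwise chain rule for $s \mapsto (w^\eps_s(x))^p$.
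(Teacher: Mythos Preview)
Your argument is correct and complete. It differs genuinely from the paper's proof, so a brief comparison is in order.

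The paper does \emph{not} mollify. Instead it uses a time-discretisation argument: it partitions $[t_0,t]$, telescopes $\langle w_t,\varphi(w_t)\rangle - \langle w_{t_0},\varphi(w_{t_0})\rangle$, and on each sub-interval applies the weak formulation (Lemma~\ref{lem:MildSolsAreWeakSols}) with the \emph{frozen} test function $\varphi(w_{t_i})$. This leaves a Taylor-type remainder involving $\varphi(w_{t_{i+1}})-\varphi(w_{t_i})$, whose second-order part is shown to vanish using the time-H\"older regularity $\|w_s-w_r\|_{L^\infty}\lesssim |s-r|^\kappa$ with $\kappa>1/2$ from Lemma~\ref{lem:spatial_reg}; the first-order part is handled by a second application of the weak formulation. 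Your approach instead mollifies in space, reduces to a classical pointwise ODE for $s\mapsto w^\eps_s(x)$, applies the chain rule and spatial integration by parts at the smooth level, and passes $\eps\to0$ using only the uniform $C([t_0,T];C^1)$ regularity of $w$ and $C([t_0,T];L^\infty)$ regularity of $F$. Both routes lean on Lemma~\ref{lem:spatial_reg}, but for different purposes: the paper needs the \emph{temporal} H\"older exponent to exceed $1/2$, while you only need the \emph{spatial} regularity to exceed $1$. Your argument is the more standard $L^p$-energy justification and is arguably shorter; the paper's Riemann-sum approach avoids introducing an auxiliary approximation and stays entirely within the weak formulation, which can be advantageous when the nonlinearity interacts poorly with mollification.
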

\begin{proof}
Let $\beta \in (1,\alpha+1)$ and $\kappa \in \left(1/2,\beta/2\right)$.
By Lemma~\ref{lem:spatial_reg},
\begin{equation}\label{eq:SolTimeHolder}
\sup_{s\,\neq \,r \in [t_0,T]} \frac{\|w_s-w_r\|_{L^\infty}}{|s-r|^\kappa}\leq \sup_{s\,\neq \,r \in [t_0,T]} \frac{\|w_s-w_r\|_{\mcC^{\beta-2\kappa}}}{|s-r|^\kappa} < \infty.
\end{equation}
Now, let $\varphi \in C^{\infty}(\mbR)$ and observe that for any $t_0\leq r<s\leq T$ we have the identity 
\begin{align*}
\langle w_s,\varphi(w_s)\rangle - \langle w_r,\varphi(w_r)\rangle& = \langle w_s,\varphi(w_r)\rangle - \langle w_r,\varphi(w_r)\rangle + \langle w_s,\varphi(w_s)-\varphi(w_r)\rangle.
\end{align*}
So for any $t\in (t_0,T]$, and $n\geq 2$, defining a family of partitions, by setting $t_i = t_0+i(t-t_0)/n$, for $i=0,\ldots,n$ and applying Lemma \ref{lem:MildSolsAreWeakSols} with the test function $\varphi(w_{t_i})$, we have 
\begin{align}
\langle w_t,\varphi(w_t)\rangle - \langle w_{t_0},\varphi(w_{t_0})\rangle&=-\sum_{i=0}^{n-1}\int_{t_i}^{t_{i+1}}
\langle \partial_x w_s
+\chi(w_s+Z_s)^2\partial_x \rho_{w_s+Z_s},\partial_x \varphi(w_{t_i})\rangle \dd s \notag \\
&\quad +\sum_{i=0}^{n-1} \left\langle w_{t_{i+1}},\varphi(w_{t_{i+1}})-\varphi(w_{t_i})\right\rangle\notag \\
&=:-I_n(t)+R_n(t) \notag.
\end{align}
By continuity of $s\mapsto \partial_x\varphi(w_s)$ we see that
\begin{equation}\label{eq:ILimit}
    \lim_{n\rightarrow \infty} I_n(t)= \int_0^t \langle \partial_x w_s
    + \chi(w_s+Z_s)^2\partial_x \rho_{w_s+Z_s},\partial_x \varphi(w_{s})\rangle \dd s.
\end{equation}
For $R^n(t)$, Taylor's formula gives
\begin{align*}
R_n(t) &= \sum_{i=0}^{n-1}\left[ \langle w_{t_{i+1}},(w_{t_{t+i}}-w_{t_i})\varphi'( w_{t_i})\rangle + \Big\langle w_{t_{i+1}},\int_{w_{t_i}}^{w_{t_{i+1}}} \varphi''(y)(w_{t_{i+1}}-y)\,\dd y\Big\rangle\right]
\\
&=:R_{n;1}(t)+R_{n;2}(t).
\end{align*}
We show that $R^n_2(t)$ converges to zero. Using \eqref{eq:SolTimeHolder}, we have the bound
\begin{align*}
|R_{n;2}(t)|&\lesssim
\sup_{s\in[t_0,t]}\|w_s\|_{L^\infty}\sup_{\substack{i = 0,\ldots,n-1\\y \in [w_{t_{i}},w_{t_{i+1}}]}}|\varphi''(y)|\,\sum_{i=0}^{n-1} \|w_{t_{i+1}}-w_{t_i}\|^2_{L^\infty} \overset{\tiny \eqref{eq:SolTimeHolder}}{\lesssim} \sum_{i=0}^{n-1} |t_{i+1}-t_i|^{2\kappa},
\end{align*}
where we used that $\varphi''$ is continuous. Regarding $R_{n;1}(t)$, we may apply Lemma \ref{lem:MildSolsAreWeakSols} once again to each bracket, this time with the test function $w_{t_{i+1}}\varphi'(w_{t_i})$, to give that
\begin{align*}
R_{n;1}(t)	&= -\sum_{i=0}^{n-1} \int_{t_i}^{t_{i+1}} \langle \partial_x w_s
+\chi(w_s+Z_s)^2\partial_x \rho_{w_s+Z_s},\partial_x (w_{t_{i+1}}\varphi'(w_{t_i}))\rangle \,\dd s.
\end{align*}
So again, by regularity of the map $s \mapsto \partial_x w_s$ and $\varphi$, we have that
\begin{equation}\label{eq:RLimit}
\lim_{n\rightarrow \infty}R_n(t)= -\int_{t_0}^{t} \langle \partial_x w_s
+\chi(w_s+Z_s)^2\partial_x \rho_{w_s+Z_s},\partial_x (w_{s}\varphi'(w_{s}))\rangle \,\dd s.
\end{equation}
So combining \eqref{eq:ILimit} and \eqref{eq:RLimit} gives,
\begin{align*}
\langle w_t,\varphi(w_t)\rangle - \langle w_{t_0},\varphi(w_{t_0})\rangle
=
-\int_{t_0}^t \langle \partial_x w_r
&+
\chi(w_r+Z_r)^2\partial_x \rho_{w_r+Z_r},
\\
&\qquad
\left(2\varphi'(w_r)+ w_r \varphi''(w_r)\right)\partial_x w_r \rangle \,\dd r.
\end{align*}
For $\varphi(x):= x^{p-1}$ we have $2\varphi'(x) + x\varphi''(x)= p(p-1)x^{p-2}$ and
\begin{equation*}
\langle w_t,w_t^{p-1}\rangle - \langle w_{t_0},w_{t_0}^{p-1}\rangle = \|w_t\|_{L^p}^p-\|w_{t_0}\|_{L^p}^p,
\end{equation*}
which gives \eqref{eq:1dRemainderTestEq}.
\end{proof}
Observe that one of the terms appearing in the second term on the right hand side of \eqref{eq:1dRemainderTestEq} is
\begin{equation*}
-\langle \partial_x \rho_{w},w^p\partial_x w\rangle = -\frac{1}{p+1}\langle \partial_x \rho_{w},\partial_x w^{p+1}\rangle = -\frac{1}{p+1}\|w^{p+2}\|_{L^1} + \frac{1}{p+2}\langle \bar{w},w^{p+1} \rangle .
\end{equation*}
We will use this term in combination with the first term, $ -\|w_t^{p-2}|\partial_x w_t|^2\|_{L^1}$, coming from the Laplacian,
to obtain an a priori estimate in Theorem~\ref{th:1dRemainderAPriori} on $\|w_{t}\|^p_{L^p}$ that is independent of the initial data. To do so we make use of an ODE comparison lemma which can be found with proof as \cite[Lem.~3.8]{tsatsoulis_weber_18}.
\begin{lemma}[ODE Comparison]\label{lem:ODEComparison}
Let $\lambda>1$, $c_1,\,c_2>0$ and $f\colon[0,T]\rightarrow [0,\infty)$ be differentiable and satisfying
\begin{equation*}
f'(t) + c_1 f(t)^\lambda \leq c_2,
\end{equation*}
for every $t\in [0,T]$. Then for all $t\in(0,T]$
\begin{equation*}
f(t)\leq \max\Big\{
\Big(\frac{tc_1(\lambda-1)}{2}\Big)^{-\frac{1}{\lambda-1}},\,\left(\frac{c_2}{c_1}\right)^{\frac{1}{\lambda}}\Big\} .
\end{equation*}
\end{lemma}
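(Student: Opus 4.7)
The plan is to combine a monotonicity observation with a direct integration of the differential inequality, comparing $f$ to the explicit solution of a separable auxiliary ODE. Let $A(t) := (c_1(\lambda-1)t/2)^{-1/(\lambda-1)}$ and $B := (c_2/c_1)^{1/\lambda}$. A routine computation shows that $A$ is the explicit solution of the autonomous ODE $y'= -\tfrac{c_1}{2} y^\lambda$ that blows up as $t\to 0^+$, so $A$ is the natural comparison function associated with "half" of the dissipation term $c_1 f^\lambda$.

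The key observation is the following dichotomy: whenever $f(s) \geq (2c_2/c_1)^{1/\lambda} = 2^{1/\lambda}B$, we have $c_2 \leq \tfrac{1}{2} c_1 f(s)^\lambda$, so the hypothesis of the lemma simplifies to $f'(s) \leq -\tfrac{c_1}{2} f(s)^\lambda$. To deploy this bound uniformly on an interval, I would establish a monotonicity/crossing lemma: since $f(s) > B$ already forces $f'(s) < 0$ via the differential inequality, $f$ cannot have "bumped above" the threshold $2^{1/\lambda}B$ on $(0,t^*]$. Formally, if $f(t^*) > 2^{1/\lambda}B$ and $t_1 := \sup\{s < t^* : f(s) \leq 2^{1/\lambda}B\}$ were positive, continuity would give $f(t_1) = 2^{1/\lambda}B$, while strict monotonicity of $f$ on $(t_1, t^*]$ (where $f > 2^{1/\lambda}B > B$) would force $f(t^*) < 2^{1/\lambda}B$, a contradiction. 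Hence $f(s) > 2^{1/\lambda}B$ on all of $(0,t^*]$.

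Given that, $f'(s) \leq -\tfrac{c_1}{2} f(s)^\lambda$ throughout $(0, t^*]$, so separating variables and integrating gives
\begin{equation*}
\frac{1}{\lambda-1}\bigl(f(t^*)^{1-\lambda} - f(0)^{1-\lambda}\bigr) \geq \frac{c_1}{2}\, t^* .
\end{equation*}
Dropping the nonnegative term $\frac{1}{\lambda-1} f(0)^{1-\lambda}$ and solving for $f(t^*)$ yields $f(t^*)^{\lambda-1} \leq 2/(c_1(\lambda-1)t^*)$, i.e.\ $f(t^*) \leq A(t^*)$. In the complementary regime $f(t^*) \leq 2^{1/\lambda}B$, the equilibrium term in the $\max$ provides the bound directly. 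Combining the two regimes produces the stated bound.

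The only delicate ingredient is the monotonicity/crossing step: the inequality $f' < 0$ is strict only above $B$, so care is needed to confirm that $f$ cannot re-enter the regime above $2^{1/\lambda}B$ after dropping below it. Everything else reduces to the identity $A' = -\tfrac{c_1}{2} A^\lambda$ and standard separation of variables.
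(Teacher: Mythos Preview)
Your approach is the standard one (the paper itself does not prove this lemma but defers to \cite[Lem.~3.8]{tsatsoulis_weber_18}); the crossing argument and the separation-of-variables integration are both correct. There is, however, a slip in your final sentence. In the complementary regime you only obtain $f(t^*)\le 2^{1/\lambda}B$, and this does \emph{not} imply $f(t^*)\le B=(c_2/c_1)^{1/\lambda}$, which is the ``equilibrium term in the $\max$''. What your argument actually establishes is
\[
f(t)\le\max\Big\{\Big(\tfrac{tc_1(\lambda-1)}{2}\Big)^{-\frac{1}{\lambda-1}},\ \Big(\tfrac{2c_2}{c_1}\Big)^{\frac{1}{\lambda}}\Big\},
\]
i.e.\ the stated bound with an extra factor $2^{1/\lambda}$ in the second entry.

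This is not a defect of your reasoning: the lemma as printed is in fact slightly too sharp. With $c_1=c_2=1$ and $\lambda=2$ one has $B=1$ and $A(t)=2/t$, while $f(t)=\coth(t+c)$ for any $c>0$ satisfies $f'+f^2=1$ identically, is nonnegative and differentiable on $[0,T]$, yet $f(2)=\coth(2+c)>1=\max\{A(2),B\}$ once $c$ is small (indeed $\coth 2\approx 1.037$). So the bound with $(c_2/c_1)^{1/\lambda}$ cannot hold in general, and the version you prove, with $(2c_2/c_1)^{1/\lambda}$, is the correct one. For the application in Theorem~\ref{th:1dRemainderAPriori} the discrepancy is harmless: the factor $2^{1/\lambda}\le 2$ is absorbed into the unspecified constant $C$.
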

\begin{remark}\label{rem:chi}
For a given initial data $\zeta \in \mcC^{\alpha_0}(\mbT)$, a mild solution $w$ on $[0,T]$ to~\eqref{eq:1dRepRemainder}, and a constant $c\in\mbR$ with $c\neq 0$, we have that $\tilde{w}_t=cw_t$ satisfies
\begin{equation*}
\tilde{w}_t = e^{t\Delta}\tilde\zeta
+ \int_0^t e^{(t-s)\Delta} c^{-2}\chi\partial_x
\big(
(\tilde w_s + \tilde Z_s)^2\partial_x\rho_{\tilde w_s+\tilde Z_s}
\big)\dd s\;,
\end{equation*}
where $\tilde \zeta = c\zeta$ and $\tilde Z_s = cZ_s$.
In particular, if $\chi>0$, then setting $c:=\chi^{1/2}$ we see that any solution to~\eqref{eq:1dRepRemainder} is equal, up to a scalar multiple, to a solution to~\eqref{eq:1dRepRemainder} with $(\zeta,Z,\chi)$ replaced by $(\tilde\zeta,\tilde Z,1)$.
Conversely, 
if $\bar\zeta\neq 0$, then setting $c:=1/\bar\zeta$
we see that any solution to~\eqref{eq:1dRepRemainder} is equal, up to a scalar multiple, to a solution to~\eqref{eq:1dRepRemainder} with $(\zeta,Z,\chi)$
replaced by $(\tilde\zeta,Z,c^2 \chi)$
and where now $\tilde \zeta\in\mcC^{\alpha_0}_1(\mbT)$.
\end{remark}

In light of Remark~\ref{rem:chi}, we phrase the following a priori estimate only for the case $\bar\zeta\in\{0,1\}$.
\begin{theorem}[A Priori Bound on the Remainder]\label{th:1dRemainderAPriori}
Let $p\geq 2$ be an even integer, $\chi>0$, $\mfm\in \{0,1\}$, $\zeta \in \mcC^{\alpha_0}_\mfm (\mbT)$ and $w(\zeta) \in C_{\eta;T}\mcC^{\alpha}$ be a mild solution to \eqref{eq:1dRepRemainder} on $[0,T]$. Then there exists a constant $C>0$, independent of $\zeta$, $Z$, $p$ and $\chi$, such that
\begin{equation}\label{eq:1dRemainderAPrioriBound}
\|w_t(\zeta)\|_{L^p}
\leq
\max
\Big\{
(\chi t/4)^{-\frac12},
C(p\vee \chi)^{\gamma}\|Z\|^{\frac{1}{\alpha}}_{C_t\mcC^{\alpha}},\,C(p\vee \chi)^{\frac{p+1}{p+2}}\mfm\,
\Big\},
\end{equation}
where $\gamma=\frac{p(1+\alpha)+2+\alpha}{\alpha(p+2)}$.
\end{theorem}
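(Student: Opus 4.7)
The strategy is to derive a closed ODE inequality for $f(t):=\|w_t\|_{L^p}^p$ by exploiting the chemorepulsive sign $\chi>0$, then apply the comparison Lemma \ref{lem:ODEComparison} with $\lambda=(p+2)/p$. By Lemma \ref{lem:spatial_reg}, $w$ is smooth on compact subintervals of $(0,T]$, so one may differentiate the tested identity of Proposition \ref{prop:1dRemainderTest} to obtain
\begin{equation*}
\tfrac{\dd}{\dd t}\|w_t\|_{L^p}^p = -\tfrac{4(p-1)}{p}\|\partial_x w_t^{p/2}\|_{L^2}^2 - p(p-1)\chi\int_{\mbT}(w_t+Z_t)^2\partial_x\rho_{w_t+Z_t}\,w_t^{p-2}\partial_x w_t\,\dd x\,.
\end{equation*}
The key observation is that the ``pure'' repulsive piece $-\chi p(p-1)\int w^2\partial_x\rho_w\,w^{p-2}\partial_x w\,\dd x$, with $\rho_w$ the mean-free solution to $-\partial_{xx}\rho_w=w-\mfm$, is dissipative: writing $w^2\cdot w^{p-2}\partial_x w=\tfrac{1}{p+1}\partial_x w^{p+1}$, integrating by parts, and invoking the elliptic equation gives $-\tfrac{\chi p(p-1)}{p+1}\|w\|_{L^{p+2}}^{p+2}$ plus an $\mfm$-residual $\tfrac{\chi p(p-1)\mfm}{p+1}\int w^{p+1}\,\dd x$ which vanishes when $\mfm=0$.

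Next I would decompose $(w+Z)^2\partial_x\rho_{w+Z}-w^2\partial_x\rho_w$ using $\rho_{w+Z}=\rho_w+\rho_Z$ (with $-\partial_{xx}\rho_Z=Z$) into five error terms built from $\{w^2,wZ,Z^2\}$ paired with $\{\partial_x\rho_w,\partial_x\rho_Z\}$, and estimate each one by one of two moves: (i)~integration by parts using $w^{p-2}\partial_x w=\tfrac{1}{p-1}\partial_x w^{p-1}$, transferring the derivative onto the $Z$-dependent factor and applying the elliptic identities to reduce everything to an algebraic expression in $w$ and $Z$; or (ii)~the identity $w^{p-2}\partial_x w=\tfrac{2}{p}w^{p/2-1}\partial_x w^{p/2}$ followed by Cauchy--Schwarz and Young to trade a small fraction $\varepsilon\|\partial_x w^{p/2}\|_{L^2}^2$ of the Laplacian dissipation for a term of the shape $C_\varepsilon\|Z\|_{L^\infty}^{j}\|\partial_x\rho_{w+Z}\|_{L^\infty}^{i}\|w\|_{L^q}^{r}$. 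Using $\|\partial_x\rho_{w+Z}\|_{L^\infty}\lesssim\|w\|_{L^1}+\|Z\|_{L^\infty}+\mfm$ and interpolating intermediate norms between $L^p$ and $L^{p+2}$ (e.g.\ $\|w\|_{L^{p+1}}^{p+1}\leq\|w\|_{L^{p+2}}^{(p+2)/2}\|w\|_{L^p}^{p/2}$), each error is dominated by a small multiple of $\tfrac{\chi p(p-1)}{p+1}\|w\|_{L^{p+2}}^{p+2}$ plus a constant polynomial in $(p\vee\chi)$, $\|Z\|_{C_t\mcC^\alpha}$ and $\mfm$; the $\mfm$-residual from the good term is treated identically.

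A final Young's inequality together with the H\"older bound $\|w\|_{L^p}\leq\|w\|_{L^{p+2}}$ (valid since $|\mbT|=1$) purges any remaining $w$-dependence and delivers an inequality of the schematic form
\begin{equation*}
\tfrac{\dd}{\dd t}\|w_t\|_{L^p}^p + c\chi p\bigl(\|w_t\|_{L^p}^p\bigr)^{(p+2)/p} \leq C_1(p\vee\chi)^{a_1}\|Z\|_{C_t\mcC^\alpha}^{(p+2)/\alpha} + C_2(p\vee\chi)^{a_2}\mfm^{p+2}\,,
\end{equation*}
with $c,C_i,a_i$ explicit and independent of $\zeta$, $Z$, $\chi$. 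Invoking Lemma \ref{lem:ODEComparison} with $c_1=c\chi p$ yields $f(t)\leq\max\bigl((tc_1(\lambda-1)/2)^{-p/2},(c_2/c_1)^{p/(p+2)}\bigr)$; extracting the $p$-th root reproduces the three alternatives in \eqref{eq:1dRemainderAPrioriBound}, with the coming-down-from-infinity decay $(\chi t/4)^{-1/2}$ coming from the first entry and the $\|Z\|^{1/\alpha}$ and $\mfm$ bounds from the second.

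The hard part will be the book-keeping of exponents, since a naive use of $\|Z\|_{L^\infty}\leq\|Z\|_{\mcC^\alpha}$ yields only a suboptimal $\|Z\|_{\mcC^\alpha}^{1/2}$ after the $p$-th root. Recovering the claimed exponent $1/\alpha$ and the constant $\gamma=\frac{p(1+\alpha)+2+\alpha}{\alpha(p+2)}$ requires exploiting the full H\"older--Besov structure of $Z$ on at least one bad term, most plausibly via a Littlewood--Paley decomposition $Z=Z_{<K}+Z_{\geq K}$ with $\|Z_{\geq K}\|_{L^\infty}\lesssim 2^{-K\alpha}\|Z\|_{\mcC^\alpha}$ and a Bernstein bound on $\partial_x Z_{<K}$, with the frequency threshold $K$ optimised against $\|w\|_{L^{p+2}}$. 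The resulting $\alpha$-dependent Young exponent then feeds through the ODE comparison to produce the stated powers.
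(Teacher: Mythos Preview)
Your overall strategy matches the paper's: differentiate the $L^p$ identity, isolate the two dissipative terms $A_t:=\|w_t^{p-2}|\partial_x w_t|^2\|_{L^1}$ and $B_t:=\|w_t^{p+2}\|_{L^1}$, absorb all cross terms into small multiples of $A_t+B_t$ plus constants, and close via Lemma~\ref{lem:ODEComparison}. You have correctly identified the critical term and the need to exploit the full $\mcC^\alpha$ regularity of $Z$.

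One correction: your claim that the naive $\|Z\|_{L^\infty}$ approach yields a suboptimal $\|Z\|^{1/2}$ is too optimistic. Bounding $\langle Z_t,\partial_x\rho_{w_t}\partial_x w_t^p\rangle$ by $\|Z\|_{L^\infty}\|\partial_x\rho_w\|_{L^\infty}\|\partial_x w^p\|_{L^1}$ and using $\|\partial_x\rho_w\|_{L^\infty}\lesssim B_t^{1/(p+2)}$ together with the analogue of \eqref{eq:GradientOfPowerBound} gives $\lesssim p\|Z\|_{L^\infty}B_t^{1/2}A_t^{1/2}$; after Young this leaves a term $\|Z\|_{L^\infty}^2 B_t$ which \emph{cannot} be absorbed into $-\tfrac{1}{p+1}B_t$ unless $\|Z\|$ is small. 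The naive route does not merely give a worse exponent---it fails to close the estimate.

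On the critical term the paper takes a different, cleaner route than your proposed Littlewood--Paley splitting. After integrating by parts (your move~(i)), one pairs $\partial_x Z_t\in\mcC^{\alpha-1}$ with $w_t^p\partial_x\rho_{w_t}$ via Besov duality~\eqref{eq:BesovDuality}, and bounds $\|w_t^p\partial_x\rho_{w_t}\|_{\mcB^{1-\alpha}_{1,1}}$ using the product estimate~\eqref{eq:BesovPositiveProductSplit} and the Besov--Poincar\'e inequality~\eqref{eq:BesovPoincare} applied to $w^p$. This produces directly a bound of the form $\|Z\|_{\mcC^\alpha}\bigl(p^{1-\alpha}B_t^{\frac{p(1+\alpha)+2}{2(p+2)}}A_t^{\frac{1-\alpha}{2}}+B_t^{\frac{p+1}{p+2}}\bigr)$, and two Young inequalities with exponents $\tfrac{2}{1+\alpha}$ and $\tfrac{(p+2)(1+\alpha)}{2\alpha}$ then yield the constant $\|Z\|^{(p+2)/\alpha}$ and the stated $\gamma$. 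Your frequency-threshold optimisation is a valid and essentially equivalent device---it is a hands-on unpacking of the same duality---but the paper's use of~\eqref{eq:BesovPoincare} avoids the optimisation in $K$ and makes the book-keeping of the $p$- and $\alpha$-dependent exponents more transparent.
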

\begin{proof} Since $t\mapsto \partial_x w_t$ and $t\mapsto (w_t+Z_t)^2\partial_x \rho_{w_t+Z_t}$ are both continuous mappings from $(0,T]$ into $L^\infty(\mbT)$ by Lemma~\ref{lem:spatial_reg}, we may differentiate \eqref{eq:1dRemainderTestEq} with respect to $t$ in order to obtain
\begin{equation*}
\frac{1}{p(p-1)}\frac{\dd}{\dd t} \|w_t\|_{L^p}^p = - \|w_t^{p-2}|\partial_x w_t|^2\|_{L^1}-\chi \langle (w_t+Z_t)^2\partial_x \rho_{w_t+Z_t},w_t^{p-2}\partial_x w_t\rangle,
\end{equation*}
where we used that $p$ is even in the first term on the right hand side. Regarding the second term, since $f\mapsto \partial_x \rho_f$ is linear, we have
\begin{align*}
\langle (w_t+Z_t)^2\partial_x \rho_{w_t+Z_t},w_t^{p-2}\partial_x w_t\rangle
&= \langle \partial_x \rho_{w_t}, w^p_t\partial_x w_t\rangle + \langle \partial_x\rho_{Z_t}, w^p_t \partial_x w_t\rangle\\
&\quad + 2\langle Z_t\partial_x \rho_{w_t+Z_t},w^{p-1}_t\partial_x w_t\rangle + \langle Z_t^2\partial_x\rho_{w_t+Z_t},w^{p-2}_t\partial_x w_t\rangle.
\end{align*}
Integrating the first term by parts, using that $-\partial_{xx}\rho_{w_t}=w_t-\bar{\zeta}$ by Corollary~\ref{cor:1dSolConstantMean} and recalling that we have set $\bar{\zeta} = \mfm$,
\begin{align*}
\langle \partial_x \rho_{w_t}, w^p_t\partial_x w_t\rangle=\frac{1}{p+1}\langle \partial_x \rho_{w_t}, \partial_x w^{p+1}_t\rangle
&=
\frac{1}{p+1}\langle w_t-\mfm,w_t^{p+1}\rangle
\\
&= \frac{1}{p+1} \|w_t^{p+2}\|_{L^1} -\frac{1}{p+1}\langle \mfm,w_t^{p+1}\rangle, 
\end{align*}
Therefore, applying the chain rule in the remaining terms gives
\begin{equation}\label{eq:1dRemLpDiffEquality1}
\begin{split}
\frac{1}{p(p-1)\chi}\frac{\dd}{\dd t} \|w_t\|_{L^p}^p & = - \frac{1}{\chi}\|w_t^{p-2}|\partial_x w_t|^2\|_{L^1} - \frac{1}{p+1} \|w_t^{p+2}\|_{L^1}+ \frac{1}{p+1}\langle 
\mfm,w_t^{p+1}\rangle\\
&\quad - \frac{1}{p+1}\langle \partial_x\rho_{Z_t}, \partial_x w^{p+1}_t\rangle- \frac{2}{p}\langle Z_t\partial_x \rho_{Z_t},\partial_x w^p_t\rangle\\
&\quad - \frac{1}{p-1}\langle Z_t^2\partial_x\rho_{Z_t},\partial_x w^{p-1}_t\rangle- \frac{2}{p}\langle Z_t,\partial_x \rho_{w_t}\partial_x w^p_t\rangle
\\
&\quad -\frac{1}{p-1}\langle Z_t^2,\partial_x\rho_{w_t}\partial_x w^{p-1}_t\rangle.
\end{split}
\end{equation}
We demonstrate that all terms without a definite sign can be bounded by a constant multiple of the two negative terms, 
\begin{equation*}
\frac{1}{\chi}\|w_t^{p-2}|\partial_xw_t|^2\|_{L^1}+\frac{1}{p+1}\|w_t^{p+2}\|_{L^1}=: \frac{1}{\chi}A_t + \frac{1}{p+1}B_t.
\end{equation*}
From Theorems~\ref{th:BesovEmbedding} and~\ref{th:BesovElliptic}, for any $\alpha \in \mbR$ and $p,q \in [1,\infty]$, we have that
\begin{equation}\label{eq:BesovGradElliptic2}
\|\partial_x \rho_w\|_{\mcB^{\alpha}_{p,q}} \lesssim \|w\|_{\mcB^{\alpha-\frac{1}{p}}_{1,q}},\quad \text{ and }\quad \|\partial_x \rho_w\|_{L^\infty} \lesssim \|w\|_{L^1}.
\end{equation}
For any $0<q\leq p+2$, by Jensen's inequality we have
\begin{equation}\label{eq:Jensen1}
\|w_t^q\|_{L^1}\leq B_t^{\frac{q}{p+2}}.
\end{equation}
Furthermore, by applying Cauchy--Schwarz followed by \eqref{eq:Jensen1}, for $ \frac{2}{p}< q \leq p+1$ we have 
\begin{equation}\label{eq:GradientOfPowerBound}
\|\partial_x w^q_t\|_{L^1} = q\| w^{q-1}_t \partial_x w_t\|_{L^1} \leq  q\|w^{2q-p}_t\|^{\frac{1}{2}}_{L^1}\,A_t^{\frac{1}{2}}\leq qB_t^{\frac{2q-p}{2(p+2)}}\,A_t^{\frac{1}{2}}.
\end{equation}
To keep track of dependence on $p$, we also make use of the following inequality which readily follows from Young's inequality for products:
for $\gamma_1,\gamma_2 \in (1,\infty)$ such that $\frac{1}{\gamma_1}+\frac{1}{\gamma_2}=1$, $a,\,b>0$ and $c\geq1$
\begin{equation}\label{eq:YoungPeterAndPaul}
ab < \frac{c^{\gamma_1-1}}{\gamma_1-1} a^{\gamma_1} + \frac{1}{c}b^{\gamma_2}
\end{equation}
(in fact $ab < \frac{c^{\gamma_1-1}}{e(\gamma_1-1)} a^{\gamma_1} + \frac{1}{c}b^{\gamma_2}$).
From now on we let $C>0$,\,$c\geq 1$ be constants, that are independent of $\zeta,\,Z,\,p,$ and $\chi$. Later, we will fix $c\geq 1$ sufficiently large at the end of the proof. If we write $\lesssim$ in an inequality below, the implied proportionality constant is equal to $C>0$ which we take sufficiently large so that the inequality holds.

We work through the terms of \eqref{eq:1dRemLpDiffEquality1} in order. For the third term of \eqref{eq:1dRemLpDiffEquality1}, applying H{\"o}lder, \eqref{eq:Jensen1}, \eqref{eq:YoungPeterAndPaul} and using that $\mfm\in \{0,1\}$, we have,
\begin{equation}\label{eq:FMeanTerm}
\hspace{-1em}	\frac{1}{p+1}|\langle \mfm, w^{p+1}_t\rangle| \leq\frac{\mfm}{p+1} \|w^{p+1}_t\|_{L^1} \leq \frac{\mfm}{p+1}B_t^{\frac{p+1}{p+2}} \leq \frac{\mfm c^{p+1}}{(p+1)^2} + \frac{1}{c(p+1)}B_t.
\end{equation}
For the fourth term of \eqref{eq:1dRemLpDiffEquality1}, we integrate by parts to give
\begin{equation}\label{eq:PartialRhoVPartialW}
\hspace{-1em}\begin{aligned}
\frac{1}{p+1}|\langle \partial_x \rho_{Z_t}, \partial_x w_t^{p+1}\rangle | =\frac{1}{p+1} |\langle Z_t,w_t^{p+1}\rangle|\hspace{0.2em}\overset{\tiny\eqref{eq:BesovDuality}\, \eqref{eq:Jensen1}}{\lesssim}&\frac{1}{p} \|Z\|_{C_t\mcC^{\alpha}} B_t^{\frac{p+1}{p+2}}\\
\overset{\tiny \eqref{eq:YoungPeterAndPaul}}{\leq}\hspace{0.8em}& \frac{c^{p+1}}{p^2} \|Z\|_{C_t\mcC^{\alpha}}^{p+2} + \frac{1}{cp}B_t.
\end{aligned}
\end{equation}
Concerning the two remaining terms of \eqref{eq:1dRemLpDiffEquality1} involving $\partial_x \rho_{Z_t}$, we have for $k=1,2$ 
\begin{equation}\label{eq:VkPartialRhoVPartialW}
\begin{aligned}
\frac{1}{p+1-k}|\langle Z^k_t\partial_x \rho_{Z_t},\partial_x w^{p+1-k}_{t} \rangle | \hspace{1.3em}\leq \hspace{2.3em}& \frac{1}{p+1-k}\| Z^k_t\partial_x \rho_{Z_t}\|_{L^\infty}\, \|\partial_x w^{p+1-k}_{t} \|_{L^1}\\
\overset{\tiny \eqref{eq:BesovProduct},\,\eqref{eq:BesovGradElliptic},\,\eqref{eq:GradientOfPowerBound}}{\lesssim}& \|Z_t\|^{k+1}_{\mcC^\alpha}B_t^{\frac{p+2-2k}{2(p+2)}}A_t^{\frac{1}{2}}\\
\overset{\tiny\eqref{eq:YoungPeterAndPaul}}{ \leq }\hspace{1.9em}& c \|Z_t\|^{2(k+1)}_{\mcC^{\alpha}}\,B_t^{\frac{p+2-2k}{p+2}} + \frac{1}{c} A_t\\
\overset{\tiny\eqref{eq:YoungPeterAndPaul}}{ \lesssim }\hspace{1.9em}& \frac{c^{\frac{p+2-k}{k}}}{p} \|Z\|^{\frac{k+1}{k}(p+2) }_{C_t\mcC^{\alpha}} + \frac{1}{c}B_t + \frac{1}{c} A_t.
\end{aligned}
\end{equation}
(In the case $p=k=2$, note that we do not need to apply~\eqref{eq:YoungPeterAndPaul} in the final line.)
Combining \eqref{eq:PartialRhoVPartialW}, \eqref{eq:VkPartialRhoVPartialW} and noting that $\frac{3}{2}(p+2)$ is the highest power of  $\|Z\|_{C_t\mcC^{\alpha}}$ encountered so far, we have that
\begin{equation}\label{eq:FRhoVTerms}	\begin{aligned}
\hspace{-1.5em}\sum_{k=0}^2 \binom{2}{k} \frac{1}{p+1-k}	|\langle Z^k_t\partial_x \rho_{Z_t},\partial_x w^{p+1-k}_{t} \rangle | &\lesssim
\frac{c^{p+1}}{p}\|Z\|^{\frac{3}{2}(p+2)}_{C_t\mcC^{\alpha}} + \frac{1}{c}B_t +\frac{1}{c}A_t.
\end{aligned}
\end{equation}
For the seventh term of \eqref{eq:1dRemLpDiffEquality1}, we first integrate by parts and apply the triangle inequality to obtain
\begin{equation}\label{eq:MixedTerm1Split}
\frac{1}{p}|\langle Z_t,\partial_x \rho_{w_t}\partial_x w_t^p\rangle |\, \leq \,	\frac{1}{p}|\langle \partial_x Z_t ,w^p_t\partial_x \rho_{w_t}\rangle| + 	\frac{1}{p}|\langle Z_t,w_t^{p+1}\rangle|+\frac{1}{p}|\langle Z_t,w^{p}_t\mfm\rangle|.
\end{equation}
Since $\alpha\in(0,\frac12)$,
using Theorems \ref{th:BesovProduct} and \ref{th:BesovPoincare}, we have
\begin{align}
\|w_t^p\partial_x \rho_{w_t}\|_{\mcB^{1-\alpha}_{1,1}} \hspace{1.6em}\overset{\tiny \eqref{eq:BesovPositiveProductSplit} }{\lesssim}\hspace{1.7em} &\|w_t^p\|_{L^{\frac{1}{1-\alpha}}}\|\partial_x \rho_{w_t}\|_{\mcB^{1-\alpha}_{\frac{1}{\alpha},1}} + \| w^p_t\|_{\mcB^{1-\alpha}_{1,1}}\|\partial_x \rho_{w_t}\|_{L^\infty} \notag \\
\overset{\tiny \eqref{eq:BesovGradElliptic2},\,\eqref{eq:BesovEmbedding},\,\eqref{eq:Besov_Lp_embedding}}{\lesssim}& \|w_t^p\|_{\mcB^{1-\alpha}_{1,1}}\|w_t\|_{L^1}\notag\\
\overset{\tiny \eqref{eq:BesovPoincare}}{\lesssim}\hspace{1.7em}&\left( \|\partial_x w_t^p\|^{1-\alpha}_{L^1}\|w_t^p\|^\alpha_{L^1} + \|w^p_t\|_{L^1}\right)\|w_t\|_{L^1} \notag\\
\overset{\tiny \eqref{eq:Jensen1}}{\lesssim}\hspace{1.6em} &\|\partial_x w_t^p\|^{1-\alpha}_{L^1}B_t^{\frac{p\alpha+1}{p+2}} +  B_t^{\frac{p+1}{p+2}} \notag \\
\overset{\tiny \eqref{eq:GradientOfPowerBound}}{\lesssim}\hspace{1.6em} &p^{1-\alpha}B_t^{\frac{p(1+\alpha)+2}{2(p+2)}}A_t^{\frac{1-\alpha}{2}} + B_t^{\frac{p+1}{p+2}} \label{eq:wPGradRhoWBound}.
\end{align}
Considering the first term of \eqref{eq:MixedTerm1Split}, applying \eqref{eq:wPGradRhoWBound} and then~\eqref{eq:YoungPeterAndPaul} twice with $\gamma_1=\frac{2}{1+\alpha}$ and then with $\gamma_1=\frac{(p+2)(1+\alpha)}{2\alpha}$, and using that $p^{-\alpha}\leq 1$,
\begin{align}
\frac{1}{p}|\langle \partial_x Z_t ,w^p_t\partial_x \rho_{w_t}\rangle| \hspace{0.7em} \overset{\tiny \eqref{eq:BesovDuality}}{\lesssim} \hspace{0.7em}	& \frac{1}{p}\|\partial_x Z_t \|_{\mcC^{\alpha-1}}\|w_t^p\partial_x \rho_{w_t}\|_{\mcB^{1-\alpha}_{1,1}} \notag \\
\overset{\tiny \eqref{eq:wPGradRhoWBound} }{\lesssim}\hspace{0.6em} &\| Z_t \|_{\mcC^{\alpha}} \left(B_t^{\frac{p(1+\alpha)+2}{2(p+2)}}A_t^{\frac{1-\alpha}{2}} + 	\frac{1}{p}B_t^{\frac{p+1}{p+2}} \right) \notag \\
\overset{\tiny \eqref{eq:YoungPeterAndPaul} }{\lesssim}\hspace{0.6em}&
c \|Z\|^{\frac{2}{1+\alpha}}_{C_t\mcC^{\alpha}}B_t^{\frac{p(1+\alpha)+2}{(p+2)(1+\alpha)}}+ \frac{c^{p+1}}{p^2}\|Z\|^{p+2}_{C_t\mcC^{\alpha}} \notag \\
&\quad  +\frac{1}{c}A_t + \frac{1}{pc}B_t
\notag \\
\overset{\tiny \eqref{eq:YoungPeterAndPaul} }{\lesssim} \hspace{0.6em}&
 \frac{c^{\frac{p(1+\alpha)+2+\alpha}{\alpha}} }{p}\|Z\|^{\frac{p+2}{\alpha}}_{C_t\mcC^{\alpha}}  + \frac{p+1}{cp}B_t+\frac{1}{c}A_t. \label{eq:PartialVWPartialW}
\end{align} 
Concerning the second term of \eqref{eq:MixedTerm1Split}, we have
\begin{equation}\label{eq:VWPPlusOne}
\begin{aligned}
\frac{1}{p}|\langle Z_t,w_t^{p+1}\rangle|&\overset{\tiny \eqref{eq:BesovDuality},\,\eqref{eq:Jensen1}}{\lesssim }
\frac1p \|Z\|_{C_t\mcC^{\alpha}}B_t^{\frac{p+1}{p+2}}
\overset{\tiny \eqref{eq:YoungPeterAndPaul}}{\lesssim}\,\frac{c^{p+1}}{p} \|Z\|^{p+2}_{C_t\mcC^{\alpha}}+ \frac{1}{pc}B_t.
\end{aligned}
\end{equation}
For the third term of \eqref{eq:MixedTerm1Split}, by similar estimates and using that $\mfm\in \{0,1\}$ we have,
\begin{equation}\label{eq:VWPm}
    \frac{1}{p}|\langle Z_t,w_t^p \mfm\rangle| \overset{\tiny \eqref{eq:BesovDuality},\,\eqref{eq:Jensen1}}{\lesssim} \frac{1}{p}\|Z\|_{C_t\mcC^\alpha}\mfm B^{\frac{p}{p+2}} \overset{\tiny \eqref{eq:YoungPeterAndPaul}}{\lesssim} \frac{c^p}{p^2}\|Z\|^{p+2}_{C_t\mcC^\alpha} + \frac{c^p}{p^2}\mfm + \frac{1}{cp}B_t
\end{equation}
Regarding the eighth and final term of \eqref{eq:1dRemLpDiffEquality1},
\begin{align}
\frac{1}{p-1}|\langle Z_t^2,\partial_x \rho_{w_t}\partial_x w_t^{p-1}\rangle| \hspace{0.8em}\overset{\tiny \eqref{eq:BesovDuality},\,\eqref{eq:BesovProduct}}{\lesssim} \hspace{0.8em}&\frac{1}{p} \|Z\|^2_{C_t\mcC^{\alpha}}\|\partial_x \rho_{w_t}\|_{L^\infty}\|\partial_x w^{p-1}_t\|_{L^1} \notag \\
\overset{\tiny \eqref{eq:BesovGradElliptic2},\,\eqref{eq:GradientOfPowerBound}}{\lesssim}\hspace{0.6em}&\|Z\|^2_{C_t\mcC^{\alpha}} B_t^{\frac{p}{2(p+2)}}A_t^{\frac{1}{2}} \notag \\
\overset{\tiny \eqref{eq:YoungPeterAndPaul}}{\leq}\hspace{1.5em} &c\|Z\|^4_{C_t\mcC^{\alpha}} B_t^{\frac{p}{p+2}}+\frac{1}{c}A_t \notag \\
\overset{\tiny \eqref{eq:YoungPeterAndPaul}}{\lesssim}\hspace{1.5em} &
\frac{c^{p+1}}{p} \|Z\|^{2(p+2)}_{C_t\mcC^{\alpha}} + \frac{1}{c}B_t + \frac{1}{c}A_t. \label{eq:VSquarePartialWPartialRhoW}
\end{align}
So, since $p+1\leq \frac{p(1+\alpha)+2+\alpha}{\alpha} =:\beta$ and $c\geq 1$,
combining \eqref{eq:PartialVWPartialW}--\eqref{eq:VSquarePartialWPartialRhoW},
\begin{equation}\label{eq:FRhoWTerms}
\sum_{k=1}^2
\binom{2}{k}
\frac{1}{p+1-k}|\langle Z_t^k,\partial_x \rho_{w_t}\partial_x w_t^{p+1-k}\rangle | \lesssim \frac{ c^{\beta}}{p}\|Z\|^{\frac{p+2}{\alpha}}_{C_t\mcC^{\alpha}}+\frac{c^{p}}{p^2}\mfm+\frac{1}{c}B_t +\frac{1}{c}A_t.
\end{equation}
Since $\frac{1}{4(p-1)} \leq \frac{3/4}{p+1}$,
combining
\eqref{eq:1dRemLpDiffEquality1} with \eqref{eq:FMeanTerm}, \eqref{eq:FRhoVTerms}, \eqref{eq:FRhoWTerms} and choosing $c \geq (p\vee \chi)C$ with $C>0$ the sufficiently large implied constant, we obtain that
\begin{align*}
\frac{1}{ \chi p(p-1)}\frac{\dd}{\dd t} \|w_t\|^p_{L^p}\leq&
\max
\Big\{\frac{(C (p\vee \chi))^{\beta}}{p}\|Z\|^{\frac{p+2}{\alpha}}_{C_t\mcC^{\alpha}},\,
\frac{(C (p\vee \chi))^{p+1}\mfm}{p^2}
\Big\}
\\
&-\frac{1}{4(p-1)}\|w^{p+2}_t\|_{L^1}.
\end{align*}
By Jensen's inequality,$	(\|w_t\|^p_{L^p})^{\frac{p+2}{p}}\leq \|w^{p+2}_t\|_{L^1}$, so that for $C>0$ sufficiently large
\begin{equation*}
\frac{\dd}{\dd t} \|w_t\|^p_{L^p}+ \frac{\chi p}{4}\left(\|w_t\|^p_{L^p}\right)^{\frac{p+2}{p}} \leq
\chi\max
\Big\{p(C (p\vee \chi))^{\beta}\|Z\|^{\frac{p+2}{\alpha}}_{C_t\mcC^{\alpha}},
\,(C (p\vee \chi))^{p+1}\mfm
\Big\}.
\end{equation*}
Applying Lemma \ref{lem:ODEComparison} with $f(t)= \|w_t\|^p_{L^p}$, $\lambda = \frac{p+2}{p}$ and possibly increasing $C>0$,
\begin{equation*}
\|w_t(\zeta)\|^p_{L^p} \leq
\max
\Big\{
(\chi t/4)^{-\frac p2},
(C (p\vee \chi))^{\beta \frac{p}{p+2}}\|Z\|^{\frac{p}{\alpha}}_{C_t\mcC^{\alpha}},\,(C (p\vee \chi))^{\frac{p(p+1)}{p+2}}\mfm/p^{\frac{p}{p+2}}\,
\Big\},
\end{equation*}
for every $t \in (0,T]$ and $\zeta \in \mcC^{\alpha_0}$. The stated bound, \eqref{eq:1dRemainderAPrioriBound}, then follows after taking the $p^{\text{th}}$ root on both sides and noticing that $\gamma=\frac\beta{p+2}\leq \frac{4+3\alpha}{4\alpha}$.
\end{proof}
\begin{remark}\label{rem:PLimit}
Observe that the bound \eqref{eq:1dRemainderAPrioriBound} is trivial in the limit $p\rightarrow \infty$, since $\gamma\geq 1$ and so $(p\vee \chi)^{\gamma}\rightarrow \infty$. This is in contrast to \cite{moinat_weber_20_RD}, where an a priori bound of the same form in $L^\infty(\mbT)$ is shown directly for stochastic reaction-diffusions.
\end{remark}
\begin{remark}\label{rem:TailBound}
In \cite{moinat_weber_20_RD}, the equivalent of \eqref{eq:1dRemainderAPrioriBound}, in the context of stochastic reaction-diffusion equations, is established with $\|Z\|^{1/\alpha}_{C_T\mcC^{\alpha}}$ replaced by $\|Z\|^{1/(1+\alpha)}_{C_T\mcC^{\alpha}}$. This leads to lighter than Gaussian tail bounds in the case of the reaction-diffusion equation, again contrasted with the heavier than Gaussian tails that we are able to establish for \eqref{eq:1dRepEquation}, see Theorem \ref{th:1dexpErgodic}.
\end{remark}
\begin{remark}\label{rem:Extensions}
While a mild generalisation of this analysis to $f(u)$, smooth and asymptotically quadratic  ($c_1u^2<f(u)<c_2u^2$), would likely be possible, more natural generalisations such as $f(u)=|u|$ or $f(u)=u^{m-1}$ for $m\geq 5$ and odd, seem to present significant challenges. In the case of $f(u)=|u|$, since the testing argument we follow only applies to the remainder $w=u-Z$, the nonlinearity becomes $$\partial_x(|w_t+Z_t|\partial_x\rho_{w_t+Z_t}) = \partial_x (|w_t+Z_t|)\partial_x\rho_{w_t+Z_t} - |w_t+Z_t|(w_T-\bar{w}+Z_t).$$ When testing with $w_t^{p-1}$ this no longer produces the necessary damping term $-\|w_t^{p+2}\|_{L^1}$ and so it is not directly clear how one could proceed in this case. Regarding higher polynomial powers, it appears that controlling the transport term causes a problem for $m>3$. In particular estimating the equivalent of \eqref{eq:MixedTerm1Split} in the same way, leads us to control $|\langle \partial_x Z , w^{p+m-2}\partial_x \rho_{w}\rangle |$. Repeating similar estimates as in \eqref{eq:wPGradRhoWBound} and \eqref{eq:PartialVWPartialW} lead to the condition $m<\frac{3-\alpha}{1-\alpha}$. For $\alpha \in (0,1/2)$ this restricts us to $m<5$. Letting $\alpha\rightarrow 1$ would allow $m\rightarrow \infty$. Informally speaking, integrability of the solution resulting from the transport term appears to be linked to regularity of the noise - higher regularity leads to higher integrability. This was also observed in the case of stochastic reaction-diffusion equations in \cite{moinat_weber_20_RD}. Note also that with $\alpha\geq 1$ one could apply the testing argument directly to $u$, rather than working with the remainder. In \cite{lai_xiao_17} it was in fact shown that for the same model but with $\xi\equiv 0$, global existence and convergence to equilibrium holds for all $m>0$. In the case of more regular noise $v\in C_T\mcC^{1}(\mbT)$, we would therefore expect at least an analogue of Theorem \ref{th:1dRemainderAPriori} to hold directly for $u$ also for all $m>0$.
\end{remark}
\begin{proposition}\label{prop:RemainderGWP}
Let $\mfm\in \{0,1\}$, $\zeta \in \mcC^{\alpha_0}_\mfm(\mbT)$ and $Z \in C_T\mcC^{\alpha}_0$. Then there exists a unique mild solution $w(\zeta)\in C_{\eta;T}\mcC^{\alpha}$ to~\eqref{eq:1dRepRemainder}. Furthermore, the map,
\begin{equation}\label{eq:1dRepSolMap}
\begin{aligned}
\msS : \mcC^{\alpha_0}_\mfm(\mbT)\times C_T \mcC^{\alpha}_0(\mbT) &\rightarrow C_{\eta;T} \mcC^{\alpha}_\mfm(\mbT)\\
(\zeta,Z)&\mapsto u(\zeta):=w(\zeta)+Z,
\end{aligned}
\end{equation}
is jointly, locally Lipschitz.
\end{proposition}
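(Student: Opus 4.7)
The plan is to combine the local well-posedness of Theorem~\ref{th:1dRemainderLWP} with the initial-data-independent a priori bound of Theorem~\ref{th:1dRemainderAPriori} to extend solutions globally on $[0,T]$, and then to obtain joint local Lipschitz dependence by iterating the contraction-type estimate from the proof of Theorem~\ref{th:1dRemainderLWP} along a finite partition of $[0,T]$.

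For global existence, I would let $[0,T_{\max})$ denote the maximal interval on which a mild solution $w$ exists in $C_{\eta;T_{\max}-\epsilon}\mcC^{\alpha}$ for every $\epsilon\in(0,T_{\max})$, and suppose for contradiction that $T_{\max}<T$. Applying Theorem~\ref{th:1dRemainderAPriori} on $[0,T_{\max}-\epsilon]$ with any even integer $p\geq 2$ satisfying $p>1/|\alpha_0|$ yields a bound on $\|w_t\|_{L^p}$, uniform in $t\in(0,T_{\max})$ and $\epsilon$, depending only on $T,\chi,\mfm,\|Z\|_{C_T\mcC^\alpha}$, and $p$, but crucially not on $\zeta$. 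The Besov embedding $L^p\hookrightarrow \mcB^{-1/p}_{\infty,\infty}\hookrightarrow\mcC^{\alpha_0}$ (valid by Bernstein's inequality since $-1/p>\alpha_0$) lifts this to a uniform bound $\|w_t\|_{\mcC^{\alpha_0}}\leq M$ for $t\in(0,T_{\max})$. Applying Theorem~\ref{th:1dRemainderLWP} with initial data $w_{T_{\max}-\epsilon}$ and noise $Z(\cdot+T_{\max}-\epsilon)$ produces a solution on an interval of length $\tau>0$ depending only on $M$ and $\|Z\|_{C_T\mcC^\alpha}$ via~\eqref{eq:1dT(R)Def}; choosing $\epsilon<\tau$ extends past $T_{\max}$, contradicting maximality. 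Uniqueness in $C_{\eta;T}\mcC^{\alpha}$ follows by iterating the local uniqueness argument of Theorem~\ref{th:1dRemainderLWP} over finitely many subintervals, the step size being controlled by the common $C_{\eta;T}\mcC^{\alpha}$ norm of the two candidate solutions.

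For the joint local Lipschitz property, fix a bounded neighborhood $U$ of $(\zeta_0,Z_0)$ in $\mcC^{\alpha_0}_{\mfm}\times C_T\mcC^{\alpha}_0$. Combining~\eqref{eq:bound_by_1} with the restart argument above yields a uniform bound $\|w(\zeta)\|_{C_{\eta;T}\mcC^{\alpha}}\leq M'$ for $(\zeta,Z)\in U$ and a lower bound $T_1>0$ on the local existence time (depending only on $U$). For two data pairs $(\zeta_i,Z_i)\in U$, the difference $w_1-w_2$ satisfies a mild equation whose nonlinearity, expanded as a polynomial in $w_i,Z_i$ and their differences, admits the estimate
\begin{equation*}
\|(w_{1,s}+Z_{1,s})^2\partial_x\rho_{w_{1,s}+Z_{1,s}}-(w_{2,s}+Z_{2,s})^2\partial_x\rho_{w_{2,s}+Z_{2,s}}\|_{\mcC^{\alpha}} \lesssim M' s^{-2\eta}\bigl(\|w_1-w_2\|_{C_{\eta;s}\mcC^{\alpha}}+\|Z_1-Z_2\|_{C_s\mcC^{\alpha}}\bigr),
\end{equation*}
exactly as in the contraction step of Theorem~\ref{th:1dRemainderLWP}. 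Combining with~\eqref{eq:HeatFlow} and~\eqref{eq:InitialCondHeatKernel} gives a Lipschitz estimate on $[0,T_1]$ of the form $\|w_1-w_2\|_{C_{\eta;T_1}\mcC^{\alpha}}\leq C(M')(\|\zeta_1-\zeta_2\|_{\mcC^{\alpha_0}}+\|Z_1-Z_2\|_{C_{T_1}\mcC^{\alpha}})$. I would then iterate this over $\lceil T/T_1\rceil$ subintervals, restarting at each step from $w_{i,kT_1}$ (whose $\mcC^{\alpha}$-norms are bounded uniformly by $M'$), to cover all of $[0,T]$; the Lipschitz property of $\msS$ follows since $u=w+Z$ and addition of $Z$ is trivially Lipschitz.

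The main obstacle is the first step: promoting the $L^p$ a priori bound to a $\mcC^{\alpha_0}$ bound that is uniform in the initial data, which is precisely what enables a restart of Theorem~\ref{th:1dRemainderLWP} with a uniformly lower-bounded existence time. The flexibility of $p$ in Theorem~\ref{th:1dRemainderAPriori} is essential, since one needs $p>1/|\alpha_0|$ for the Besov embedding to apply, and the resulting constant $M$ deteriorates as $\alpha_0\downarrow -1/2$.
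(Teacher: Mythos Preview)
Your proof is correct and follows the same strategy as the paper: global existence by restarting the local theory using the a priori $L^p$ bound of Theorem~\ref{th:1dRemainderAPriori} together with the embedding $L^p\hookrightarrow\mcC^{\alpha_0}$ for $p>1/|\alpha_0|$, and joint local Lipschitz continuity by iterating the short-time contraction estimate over a finite partition of $[0,T]$ whose mesh is controlled by the uniform a priori bound. The paper packages the short-time Lipschitz step as an explicit auxiliary lemma and is slightly more careful in distinguishing the initial step size $T_*(\mfR)$ (from the raw data) from the subsequent step size $\bar T_*(\bar\mfR)$ (from the a priori bound on $[T_*,T]$), but this is a presentational rather than substantive difference.
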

\begin{proof}
We recall from Theorem \ref{th:1dRemainderLWP} that there exists a $T_*\in (0,1)$, depending only on $\|\zeta\|_{\mcC^{\alpha_0}}$ and $\|Z\|_{C_T\mcC^\alpha}$, such that a mild solution $w \in C_{\eta;T_*}\mcC^{\alpha}$ exists to $\eqref{eq:1dRepRemainder}$. Without loss of generality let us assume $T>T_*$ and that we fix an even $p \in (-\frac{1}{\alpha_0},\infty)$ so that $L^p(\mbT)\hookrightarrow\mcC^{\alpha_0}(\mbT)$. In this case, it is clear that we can extend the solution for a positive time of existence so long as $\|w_t\|_{L^p}$ remains finite. However, by Theorem \ref{th:1dRemainderAPriori}, $\|w_t\|_{L^p}$ is bounded above by a function of $t$ independent of the initial data and so we may continue the solution to all of $[0,T]$.
From Corollary \ref{cor:1dSolConstantMean}, for $u_t:= w_t+Z_t$, we have $\bar{u}_t = \bar{\zeta}+\bar{Z}_t$, for all $t\in (0,T]$. Similarly, for all $t>0$, $\|u_t\|_{\mcC^{\alpha}}\leq \|w_t\|_{\mcC^{\alpha}}+\|Z_t\|_{\mcC^{\alpha}}$. Hence the solution map~\eqref{eq:1dRepSolMap} is well-defined and $\|u\|_{C_{\eta,T}\mcC^\alpha}$ depends only on $\|Z\|_{C_T\mcC^\alpha}$.

To continue the proof, we state the following

\begin{lemma}\label{lem:short_time_Lip}
Consider $\mfR>0$
and define the set
\[
D_\mfR := \{(\zeta,Z)\in\mcC^{\alpha_0}_\mfm\times C_T\mcC^\alpha\,:\,\|\zeta\|_{\mcC^{\alpha_0}} + \|Z\|_{C_T\mcC^\alpha} < \mfR \}.
\]
Then for $T_*=T_*(\mfR)>0$ sufficiently small,
$\msS\colon D_\mfR \to C_\eta((0,2T_*],\mcC^\alpha_\mfm)$
is $K$-Lipschitz where $K=K(\mfR)>0$.
\end{lemma}

\begin{proof}
Let $(\zeta,Z),(\tilde{\zeta},\tilde{Z})\in D_\mfR$ and consider the corresponding solutions
\begin{align*}
u_t &= e^{t\Delta}\zeta + \int_0^t e^{(t-s)\Delta}\chi\partial_x(u^2_s\partial_x\rho_{u_s})\,\dd s + Z_t,\\
\tilde{u}_t &= e^{t\Delta}\tilde{\zeta} + \int_0^t e^{(t-s)\Delta}\chi\partial_x(\tilde{u}^2_s\partial_x\rho_{\tilde{u}_s})\,\dd s + \tilde{Z}_t.
\end{align*}
From Theorem \ref{th:1dRemainderLWP},
there exists $T_*(\mfR)>0$ such that
$\|u\|_{C_{\eta;2T_*}\mcC^{\alpha}}\vee \, \|\tilde{u}\|_{C_{\eta;2T_\star}\mcC^{\alpha}}\leq 2$.
For $t \in (0,2T_*]$, using Theorem \ref{th:BesovHeatFlow} and similar bounds as in the proof of Theorem \ref{th:1dRemainderLWP} we see that
\begin{align*}
\|u_t-\tilde{u}_t\|_{\mcC^{\alpha}} &\lesssim t^{-\frac{\alpha-\alpha_0}{2}}\|\zeta-\tilde{\zeta}\|_{\mcC^{\alpha_0}}  +  t^{\frac{1}{2}-3\eta} \|u-\tilde{u}\|_{C_{\eta;t}\mcC^{\alpha}}+ \|Z_t-\tilde{Z}_t\|_{\mcC^{\alpha}},
\end{align*}
where the proportionality constant does not depend on $\zeta,\tilde\zeta,Z,\tilde Z$.
So multiplying through by $t^\eta$ and taking the supremum over $t\in(0,2T_*]$ we have that
\begin{equation*}
\|u-\tilde{u}\|_{C_{\eta;2T_*}\mcC^{\alpha}} \lesssim \|\zeta-\tilde{\zeta}\|_{\mcC^{\alpha_0}} + T_*^{\frac{1}{2}-2\eta}	\|u-\tilde{u}\|_{C_{\eta;S}\mcC^{\alpha}} + \|Z-\tilde{Z}\|_{C_T\mcC^{\alpha}}.
\end{equation*}
By lowering $T_*$ further,
we obtain
\begin{equation*}
\|u-\tilde{u}\|_{C_{\eta;2T_*}\mcC^{\alpha}} \lesssim\|\zeta-\tilde{\zeta}\|_{\mcC^{\alpha_0}} +\|Z-\tilde{Z}\|_{C_T\mcC^{\alpha}}.\qedhere
\end{equation*}
\end{proof}

We now prove that $\msS$ is jointly locally Lipschitz.
Consider $\mfR>1$, $D_\mfR$, and $T_*(\mfR)>0$ as in Lemma~\ref{lem:short_time_Lip}.
Then $\msS$ is $K(\mfR)$-Lipschitz from $D_\mfR$ to $C_\eta((0,2T_*],\mcC^{\alpha})$.
In particular, the map $(\zeta,Z) \mapsto u|_{[T_*,2T_*]} \in C([T_*,2T_*],\mcC^\alpha)$ is $K(\mfR)$-Lipschitz.
If $T\leq 2T_*$, then we are done.
Hence, suppose $T>2T_*$.
We will prove that, for $\bar T_*(\mfR)\in (0,T_*]$ sufficiently small,
$u|_{[T_*+\bar T_*,T]}\in C([\bar T_*,T],\mcC^\alpha)$ is a Lipschitz function of $(\zeta,Z)$, from which the conclusion will follow.

To this end, consider $(\zeta,Z)\in D_\mfR$ and let $u$ be the corresponding solution.
%By Theorem~\ref{th:1dRemainderLWP}, after possibly decreasing $T_*(\mfR)>0$, $\|u_t\|_{\mcC^\alpha} \leq t^{-\eta} \leq t^{-1/2}$ for all $t\in (0,T_*]$.
By the a priori estimate, for $p\geq 2$ sufficiently large 
$\|u_t\|_{\mcC^{\alpha_0}}\lesssim_{\alpha_0,p} \|u_t\|_{L^p}\lesssim t^{-1/2}\vee \mfR^{1/\alpha}$ for all $t>0$.
Therefore, there exists $\bar\mfR(\mfR)>0$
such that $\|u_t\|_{\mcC^{\alpha_0}}\leq\bar\mfR$ for all $t\in[T_*,T]$.

Let $\bar T_*(\bar\mfR)>0$ be the constant from Lemma~\ref{lem:short_time_Lip}.
Without loss of generality,
we can assume that $\bar T_*\leq T_*$
and that $T=T_*+N \bar T_*$ for some integer $N\geq 2$.
By the above remark, for all $1\leq n < N$, $u|_{[T_*+n \bar T_*,T_*+(n+1)\bar T_*]}\in C([T_*+n\bar T_*,T_*+(n+1)\bar T_*],\mcC^\alpha)$ is a $K(\mfR)$-Lipschitz function of $(u_{T_*+(n-1)\bar T_*},Z)\in \mcC^{\alpha_0}\times C_T\mcC^\alpha$.
Combining these estimates together, it follows that
$u|_{[T_*+\bar T_*,T]}\in C([\bar T_*,T],\mcC^\alpha)$ is a Lipschitz function of $(\zeta,Z)$.
\end{proof}
We conclude this section with the proof of Theorem \ref{th:1dGWP}.
\begin{proof}[Proof of Theorem \ref{th:1dGWP}] From Theorem \ref{th:MarkovSHESpaceTimeRegular}, for any $T>0$ and $\alpha<\frac12$, $v_{0,\,\cdot\,}\in C_T\mcC_0^{\alpha}$ (in fact $v_{0,\,\cdot\,} \in \cap_{\kappa \in [0,1/2)}\mcC^{\kappa}_T\mcC_0^{\alpha-2\kappa}$). Hence Proposition~\ref{prop:RemainderGWP} implies that $\mbP$-a.s. there exists a unique mild solution $u(\zeta):=w(\zeta)+v_{0,\,\cdot\,}=\msS(\zeta,v_{0,\,\cdot\,})$ to~\eqref{eq:1dRepEquation} on $[0,T]$.
\end{proof}
\section{Invariant Measure}\label{sec:InvariantMeasures}
Having established global well-posedness of the SPDE \eqref{eq:1dRepEquation} we turn to study its long time behaviour. In this section we prove Theorem~\ref{th:1dexpErgodic} which implies
existence and uniqueness of the invariant measure and demonstrates exponential convergence of the adjoint semi-group to the invariant measure in the topology of total variation.

By Remark~\ref{rem:chi}, appropriately adjusting the spatial mean $\mfm$ of the initial condition $\zeta$, it suffices to prove Theorem~\ref{th:1dexpErgodic} in the case $\chi=1$.
We thus assume $\chi=1$ throughout this section.

\begin{definition}\label{def:u}
Let $v_{t_0,\,\cdot\,}$
denote the mild solution to the stochastic heat equation as in Definition~\ref{def:SHE}
and denote $v_t:=v_{0,t}$.
For $\zeta\in\mcC^{\alpha_0}$, the process $u(\zeta):=w(\zeta)+v_{\cdot}=\msS(\zeta,v_{\cdot})$, with $\msS$ as defined in Proposition~\ref{prop:RemainderGWP},
is called the mild solution on $[0,T]$ to~\eqref{eq:1dRepEquation} with initial condition $\zeta$.

We define the semi-group associated to \eqref{eq:1dRepEquation} by setting, for all $t\geq 0$,
\begin{equation*}
P_t\Phi(\zeta):= \mbE\left[\Phi(u_t(\zeta))\right], \quad \text{ for all } \Phi \in \mcB_b(\mcC^{\alpha_0}(\mbT)).
\end{equation*}
\end{definition}
We first show that $(u_t)_{t>0}$ is a Markov process with Feller semi-group $(P_t)_{t>0}$ and that for each $\zeta \in \mcC^{\alpha_0}_\mfm(\mbT)$, there exists a measure $\nu_\zeta \in \mcP(\mcC^{\alpha_0}_\mfm)$, invariant for $(P_t)_{t>0}$.
\begin{lemma}\label{lem:MarkovDecompose}
Let $u$ be a mild solution to \eqref{eq:1dRepEquation}. Then for every $t \in [0,T]$ and $h \in (0,T-t)$ we have the identity
\begin{equation}\label{eq:1dMarkovDecompose}
u_{t+h} = \tilde{w}_{t,t+h}+v_{t,t+h} ,
\end{equation}
where $\tilde{w}_{t,t+h}$ solves
\begin{equation}\label{eq:1dwTildeEquation2}
\tilde{w}_{t,t+h} = e^{h\Delta}u_t + \int_0^h e^{(h-r)\Delta}\partial_x ((\tilde{w}_{t,t+r}+v_{t,t+r})^2\partial_x \rho_{\tilde{w}_{t,t+r}+v_{t,t+r}})\,\dd r.
\end{equation}
\end{lemma}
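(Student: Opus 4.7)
The strategy is to exploit two semi-group identities: the deterministic heat semi-group applied to the mild-form of the remainder $w$, and the splitting of the linear SHE driver into a piece depending on noise up to time $t$ and a piece driven only by increments after $t$. The latter is the identity
\[
v_{0,t+h} \;=\; e^{h\Delta}v_{0,t} \;+\; v_{t,t+h},
\]
which one checks directly from Definition~\ref{def:SHE}: splitting the stochastic integral at $r=t$ and using $\mcH_{t+h-r}=\mcH_{h}\ast\mcH_{t-r}$ for $r<t$ identifies the first piece with $e^{h\Delta}v_{0,t}$, while the second piece is exactly $v_{t,t+h}$ by definition. Note both terms are a.s.\ well defined in $\mcC^\alpha_0$ for any $\alpha<1/2$ by Theorem~\ref{th:MarkovSHESpaceTimeRegular}.

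Given this, I would \emph{define}
\[
\tilde w_{t,t+h} := u_{t+h} - v_{t,t+h} = w_{t+h} + e^{h\Delta}v_{0,t},
\]
where $w = w(\zeta)$ is the mild solution of~\eqref{eq:1dRepRemainder} driven by $Z=v_{0,\cdot}$, so that the decomposition~\eqref{eq:1dMarkovDecompose} holds by construction and $\tilde w_{t,t}=u_t$. The task is then to verify that $\tilde w_{t,t+h}$ satisfies the fixed-point equation~\eqref{eq:1dwTildeEquation2}.

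For this I would use the semi-group property of $e^{h\Delta}$ applied to the Duhamel form of $w$: starting from the mild-form identity for $w_{t+h}$ and $w_t$, one obtains
\[
w_{t+h} = e^{h\Delta}w_t + \int_0^h e^{(h-r)\Delta}\chi\partial_x\bigl((w_{t+r}+v_{0,t+r})^2\partial_x\rho_{w_{t+r}+v_{0,t+r}}\bigr)\,\dd r,
\]
which is a routine change of variables in the integral. Adding $e^{h\Delta}v_{0,t}$ to both sides and using $e^{h\Delta}(w_t+v_{0,t}) = e^{h\Delta}u_t$ together with the key identity
\[
w_{t+r} + v_{0,t+r} \;=\; \tilde w_{t,t+r} + v_{t,t+r},
\]
which follows directly from the definitions of $\tilde w$ and the SHE splitting at time $t+r$, yields~\eqref{eq:1dwTildeEquation2}.

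The only step requiring any real care is the SHE splitting identity, which is a standard stochastic Fubini/semi-group computation for the Gaussian process $v$; once in hand, the rest is bookkeeping on the Duhamel formula for $w$. Regularity of $\tilde w_{t,t+h}$ in the appropriate $C_\eta$-space is inherited from the regularity of $w_{t+h}$ (Proposition~\ref{prop:RemainderGWP}) and the smoothing of $e^{h\Delta}$ on $v_{0,t}\in\mcC^\alpha_0$, so no additional analysis is needed to make sense of~\eqref{eq:1dwTildeEquation2}.
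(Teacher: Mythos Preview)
Your proposal is correct and is exactly the argument the paper has in mind: the paper's proof is the single sentence ``A simple consequence of the heat semi-group property,'' and you have correctly unpacked this into the SHE splitting $v_{0,t+h}=e^{h\Delta}v_{0,t}+v_{t,t+h}$ together with the Duhamel/semi-group identity for $w$. One cosmetic point: in Section~\ref{sec:InvariantMeasures} the paper has already set $\chi=1$, which is why~\eqref{eq:1dwTildeEquation2} omits the factor $\chi$ that appears in your intermediate display.
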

\begin{proof}
A simple consequence of the heat semi-group property.
\end{proof}
\begin{theorem}\label{thm:SolAPriori}
For $\mfm \in \mbR$, and $\zeta \in \mcC^{\alpha_0}_\mfm(\mbT)$, let $u(\zeta)\in C_{\eta;T}\mcC^{\alpha}$ be the unique solution to \eqref{eq:1dRepEquation} as in Definition~\ref{def:u}. Then for every $p\in [1,\infty)$, we have that
\begin{equation}\label{eq:1dSolexpectationApriori}
\sup_{\zeta \in \mcC^{\alpha_0}_\mfm(\mbT)}\sup_{T>0} \sup_{t\in (0,T]} \left(t^{\frac{p}{2}}\wedge 1 \right) \mbE\left[ \|u_t(\zeta)\|^p_{L^{p}} \right] <\infty.
\end{equation} 
\end{theorem}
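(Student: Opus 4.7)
The plan is to decompose $u_t=w_t+v_t$ via Lemma~\ref{lem:MarkovDecompose} and leverage the fact that the a priori bound \eqref{eq:1dRemainderAPrioriBound} is independent of the initial data, combined with Gaussian moment estimates for the SHE. I would begin with two preliminary reductions: by Remark~\ref{rem:chi} a rescaling by $1/\mfm$ reduces matters to $\mfm\in\{0,1\}$ at the cost of an $\mfm$-dependent constant, and since $\mbT$ has unit volume, $\|f\|_{L^p}\leq\|f\|_{L^q}$ for $q\geq p$, so by enlarging $p$ to the nearest even integer we may assume $p\geq 2$ is an even integer and hence lie in the scope of Theorem~\ref{th:1dRemainderAPriori}. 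I would also record that, by the $t_0$-independence of the law of $v_{t_0,t_0+\cdot}$ (last part of Theorem~\ref{th:MarkovSHESpaceTimeRegular}), $\mbE\big[\|v_{t_0,t_0+\cdot}\|_{C_{[0,h]}\mcC^\alpha}^q\big]$ is finite and independent of $t_0$ for any fixed $h>0$, $q\geq 1$ and $\alpha<1/2$, and hence via $\mcC^\alpha\hookrightarrow L^p$ also $\sup_{t>0}\mbE[\|v_t\|_{L^p}^p]<\infty$.

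For short times $t\in(0,1]$, I would apply Theorem~\ref{th:1dRemainderAPriori} directly with $Z=v$ and $t_0=0$, bound $(a\vee b\vee c)^p\leq a^p+b^p+c^p$, and take expectations to obtain
\begin{equation*}
\mbE\big[\|w_t\|_{L^p}^p\big]\lesssim_p t^{-p/2}+\mbE\big[\|v\|_{C_{[0,1]}\mcC^\alpha}^{p/\alpha}\big]+1,
\end{equation*}
whereupon multiplying by $t^{p/2}\leq 1$ and combining with the triangle inequality $\|u_t\|_{L^p}\leq\|w_t\|_{L^p}+\|v_t\|_{L^p}$ together with the SHE moment bound handles this range uniformly in $\zeta$.

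For large times $t\geq 1$, I would invoke the Markov restart of Lemma~\ref{lem:MarkovDecompose} with $t_0=t-1$, $h=1$, writing $u_t=\tilde w_{t-1,t}+v_{t-1,t}$, where $\tilde w_{t-1,t-1+\cdot}$ solves \eqref{eq:1dwTildeEquation2} with random initial data $u_{t-1}\in\mcC^\alpha_\mfm\subset\mcC^{\alpha_0}_\mfm$ (using Corollary~\ref{cor:1dSolConstantMean} for the spatial mean) and driving noise $v_{t-1,t-1+\cdot}$. Applying \eqref{eq:1dRemainderAPrioriBound} pathwise at time $1$ yields
\begin{equation*}
\|\tilde w_{t-1,t}\|_{L^p}\leq C\max\Big\{1,\;p^{\gamma}\|v_{t-1,t-1+\cdot}\|_{C_{[0,1]}\mcC^\alpha}^{1/\alpha},\;p^{\frac{p+1}{p+2}}\mfm\Big\},
\end{equation*}
whose right-hand side depends only on the noise. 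The uniform-in-$t_0$ moment bound from the first paragraph then gives $\sup_{t\geq 1}\mbE[\|\tilde w_{t-1,t}\|_{L^p}^p]<\infty$; adding the analogous control of $\|v_{t-1,t}\|_{L^p}$ and combining with the short-time estimate completes the proof. The main (mild) obstacle is justifying the restart: one must check that $\tilde w_{t-1,t-1+\cdot}$ really is a remainder solution in the sense of Definition~\ref{def:1dRemainderMildSols} with random initial data, which is legitimate precisely because \eqref{eq:1dRemainderAPrioriBound} is independent of $\zeta$ and therefore holds $\omega$-pathwise, obviating any need to condition on $\mcF_{t-1}$ or invoke independence of $v_{t-1,t-1+\cdot}$ from $u_{t-1}$.
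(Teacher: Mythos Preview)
Your proposal is correct and follows essentially the same approach as the paper: split into short times $t\in(0,1]$ where the a priori bound~\eqref{eq:1dRemainderAPrioriBound} is applied directly, and long times $t>1$ where the Markov restart of Lemma~\ref{lem:MarkovDecompose} at $t_0=t-1$ is used together with the $t_0$-independence of the law of $v_{t-1,\cdot}$. The only cosmetic differences are that you reduce to even $p$ at the outset via monotonicity of $L^p$ norms whereas the paper handles general $p\geq 1$ by Jensen at the end, and you make the reduction to $\mfm\in\{0,1\}$ explicit.
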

\begin{proof}
It suffices to consider $T>1$.
Consider first $p\geq 2$ even.
For $t \in (0,1]$, applying \eqref{eq:1dRemainderAPrioriBound} and Theorem \ref{th:MarkovSHESpaceTimeRegular},
\begin{equation}\label{eq:1dSolSmallTimeApriori}
\mbE\left[ \|u_t\|^p_{L^p}\right] \lesssim \mbE\left[ \|w_t\|_{L^p}^p \right] + \mbE\left[ \|v_t\|_{L^p}^p\right] \lesssim t^{-\frac{p}{2}}+1.
\end{equation}
For $t\in(1,T]$ we employ the Markov decomposition \eqref{eq:1dMarkovDecompose} to give
\begin{equation*}
\mbE\left[\|u_t\|_{L^p}^p\right] \lesssim \mbE\left[\|\tilde{w}_{t-1,t}\|_{L^p}^p\right] + \mbE\left[ \|v_{t-1,t}\|_{L^p}^p \right].
\end{equation*}
Then, observing that $\tilde{w}_{t-1,t}$ solves \eqref{eq:1dRepRemainder} with initial condition $u_{t-1}(\zeta)$ and driving path $Z_t=v_{t-1,t}$, by Theorem \ref{th:1dRemainderAPriori} we have
\begin{equation*}
\mbE\left[ \|\tilde{w}_{t-1,t}\|_{L^p}^p \right] \leq C_{p,\mfm} \max\Big\{ \|v_{t-1,\,\cdot\,}\|^{\frac{p}{\alpha}}_{C_{[t-1,t]}\mcC^{\alpha}},\,1\,\Big\}.
\end{equation*}
So, since the law of $v_{t-1,t}$ does not depend on $t>1$, we have
\begin{equation}\label{eq:1dSolLargeTimeAPriori}
\sup_{\zeta \in \mcC^{\alpha_0}_{\mfm}(\mbT)}\sup_{T>0} \sup_{t\in(1,T]} \mbE\left[\|u_t(\zeta)\|_{L^p}^p\right] <\infty.
\end{equation}
Combining \eqref{eq:1dSolSmallTimeApriori} with \eqref{eq:1dSolLargeTimeAPriori} proves \eqref{eq:1dSolexpectationApriori} for even $p\geq 2$. The case of general $p\geq 1$ follows by Jensen's inequality.
\end{proof}
\subsection{Existence of Invariant Measures}
We use the decomposition \eqref{eq:1dMarkovDecompose} to show that $t\mapsto u_t$ defines a Markov process.
\begin{theorem}
Let $\zeta \in \mcC^{\alpha_0}(\mbT)$ and $u_t(\zeta)$ as in Definition~\ref{def:u}. Then for any $\Phi \in \mcB_b(\mcC^{\alpha_0}(\mbT))$
\begin{equation*}
\mbE\left[\Phi(u_{t+h}(\zeta))\,\big|\, \mcF_t \right] = P_h(\Phi(u_{t}(\zeta)).
\end{equation*}
In particular, $t\mapsto u_t$ is a Markov process with associated Markov semi-group $(P_t)_{t>0}$.
\end{theorem}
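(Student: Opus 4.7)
The plan is to combine the decomposition from Lemma \ref{lem:MarkovDecompose} with the well-posedness / continuity properties of the solution map $\msS$ from Proposition \ref{prop:RemainderGWP} and the noise-increment independence of the stochastic heat equation established in Theorem \ref{th:MarkovSHESpaceTimeRegular}. First I would fix $t,h\geq 0$ and $\zeta\in\mcC^{\alpha_0}(\mbT)$. Applying Lemma \ref{lem:MarkovDecompose}, the process $\tilde w_{t,t+\cdot}$ is the unique mild solution on $[0,h]$ of the remainder equation \eqref{eq:1dRepRemainder} with initial datum $u_t(\zeta)$ and driving path $v_{t,t+\cdot}$. In the notation of Proposition \ref{prop:RemainderGWP}, this says $u_{t+h}(\zeta)=\msS(u_t(\zeta),v_{t,t+\cdot})_h$.

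Next I would record two facts about the SHE. By Definition \ref{def:SHE}, $v_{t,t+r}=\int_t^{t+r}\int_{\mbT}(\mcH_{t+r-s}\ast\phi)\,\xi(\dd s,\dd x)$ is measurable with respect to $\sigma(\xi|_{(t,\infty)\times\mbT})$, hence (since $\xi$ is a white noise) independent of $\mcF_t$. Moreover Theorem \ref{th:MarkovSHESpaceTimeRegular} gives, as processes in $C_h\mcC^{\alpha}_0$, the law-equality $v_{t,t+\cdot}\stackrel{\mathrm{d}}{=}v_{0,\cdot}$ (this follows at the level of finite-dimensional distributions from the fact that both are centred Gaussian with covariance depending only on time differences, and then extends to the path level using the a.s.\ continuity from \eqref{eq:MarkovSHESpaceTimeRegular}). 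On the other hand, $u_t(\zeta)$ is $\mcF_t$-measurable as the mild solution built from $\zeta$ and $v_{0,\cdot}|_{[0,t]}$.

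Now apply the standard independence/conditioning lemma: because $\msS$ is jointly (locally Lipschitz and hence) Borel measurable from $\mcC^{\alpha_0}_\mfm\times C_T\mcC^\alpha_0$ to $C_{\eta;T}\mcC^\alpha_\mfm$ by Proposition \ref{prop:RemainderGWP}, the map $\Psi(y,Z):=\Phi\bigl(\msS(y,Z)_h\bigr)$ is bounded and Borel measurable for any $\Phi\in\mcB_b(\mcC^{\alpha_0})$. Since $u_t(\zeta)$ is $\mcF_t$-measurable and $v_{t,t+\cdot}$ is independent of $\mcF_t$,
\begin{equation*}
\mbE\bigl[\Phi(u_{t+h}(\zeta))\,\big|\,\mcF_t\bigr]
=\mbE\bigl[\Psi(u_t(\zeta),v_{t,t+\cdot})\,\big|\,\mcF_t\bigr]
=G(u_t(\zeta)),
\quad\text{where } G(y):=\mbE[\Psi(y,v_{t,t+\cdot})].
\end{equation*}
Using the law-equality $v_{t,t+\cdot}\stackrel{\mathrm{d}}{=}v_{0,\cdot}$ we obtain $G(y)=\mbE[\Phi(\msS(y,v_{0,\cdot})_h)]=\mbE[\Phi(u_h(y))]=P_h\Phi(y)$, which gives the claimed identity. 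The Markov property and the semigroup identity $P_{t+h}=P_tP_h$ follow by the tower property.

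The only mildly delicate steps are the measurability of $\msS$ (already handed to us by the local Lipschitz continuity in Proposition \ref{prop:RemainderGWP}) and the distributional identity $v_{t,t+\cdot}\stackrel{\mathrm{d}}{=}v_{0,\cdot}$, which relies on the stationarity of the white-noise increments and on the a.s.\ continuity in $t$ provided by Theorem \ref{th:MarkovSHESpaceTimeRegular}; everything else is an application of the classical independence lemma for conditional expectations.
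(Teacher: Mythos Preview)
Your proposal is correct and follows essentially the same route as the paper: both use the decomposition of Lemma~\ref{lem:MarkovDecompose} to write $u_{t+h}(\zeta)$ as a measurable function of the $\mcF_t$-measurable state $u_t(\zeta)$ and the $\mcF_t$-independent increment $v_{t,t+\cdot}$, then invoke the standard conditioning lemma (the paper cites \cite[Prop.~1.12]{daprato_zabczyk_14}) together with the law-stationarity of the SHE increments to identify the result with $P_h\Phi(u_t(\zeta))$. Your write-up is in fact slightly more careful than the paper's in spelling out the measurability of $\msS$ and the path-level law equality $v_{t,t+\cdot}\stackrel{\mathrm{d}}{=}v_{0,\cdot}$.
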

\begin{proof}
Using the Markov decomposition \eqref{eq:1dMarkovDecompose} we see that
\begin{equation*}
\Phi(u_{t+h}(\zeta)) = \Phi(\tilde{w}_{t,t+h}(u_t(\zeta))+v_{t,t+h})=\Phi(u_{t,t+h}(u_t(\zeta))),
\end{equation*}
where $\tilde{w}_{t,t+h}(u_t(\zeta))$ solves \eqref{eq:1dwTildeEquation2}. We define
\begin{equation*}
\psi(u_t(\zeta),v_{t,t+h}) := \Phi(\tilde{w}_{t,t+h}(u_t(\zeta))+v_{t,t+h})
\end{equation*}
and then since $u_t(\zeta)$ is $\mcF_t$ measurable and $v_{t,t+h}\perp \mcF_t$, applying \cite[Prop.~1.12]{daprato_zabczyk_14}, we have that
\begin{align*}
\mbE\left[\Phi(u_{t+h}(\zeta))\,\big|\, \mcF_t \right] = \mbE[\Phi(u_{t,t+h}(u_t(\zeta)))] = P_h(\Phi(u_t(\zeta)))
\end{align*}
and so the claim is shown.
\end{proof}
The following lemma shows that $(P_t)_{t>0}$ is a Feller semi-group in the sense of \cite[Sec. 3.1]{daprato_zabczyk_96}.
\begin{lemma}\label{lem:FellerSemiGroup}
For any $\Phi \in \mcC_b(\mcC^{\alpha_0}(\mbT))$ and $t> 0$ we have that $P_t \Phi \in \mcC_b(\mcC^{\alpha_0}(\mbT))$ and for any $\zeta\in \mcC^{\alpha_0}$, $\lim_{t\rightarrow 0}|P_t\Phi(\zeta) -\Phi(\zeta)|=0$.
\end{lemma}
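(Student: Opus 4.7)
The plan is to address the two parts of the claim separately: the regularity $P_t\Phi\in\mcC_b(\mcC^{\alpha_0})$ for each $t>0$, and the stochastic continuity $\lim_{t\to 0}|P_t\Phi(\zeta)-\Phi(\zeta)|=0$. Both reduce to pathwise continuity results that have already been established in earlier sections, combined with bounded convergence; I do not expect any serious obstacle.

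For the first part, fix $t>0$. Boundedness of $P_t\Phi$ is immediate from $|P_t\Phi(\zeta)|\leq\mbE[|\Phi(u_t(\zeta))|]\leq\|\Phi\|_\infty$. For continuity, let $\zeta_n\to\zeta$ in $\mcC^{\alpha_0}_\mfm$. Fix any $T>t$; by Theorem~\ref{th:MarkovSHESpaceTimeRegular}, on a set of full probability the path $v_{\cdot}$ lies in $C_T\mcC^{\alpha}_0$. Working pathwise on this event, Proposition~\ref{prop:RemainderGWP} gives that the solution map $\zeta'\mapsto\msS(\zeta',v_{\cdot})$ is locally Lipschitz from $\mcC^{\alpha_0}_\mfm$ into $C_{\eta;T}\mcC^{\alpha}_\mfm$. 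Evaluating at time $t$, this yields
\[
\|u_t(\zeta_n)-u_t(\zeta)\|_{\mcC^{\alpha}}\longrightarrow 0\quad\text{almost surely.}
\]
Since $\alpha>\alpha_0$ we have the continuous embedding $\mcC^{\alpha}\hookrightarrow\mcC^{\alpha_0}$, and continuity of $\Phi$ on $\mcC^{\alpha_0}$ gives $\Phi(u_t(\zeta_n))\to\Phi(u_t(\zeta))$ almost surely. Bounded convergence yields $P_t\Phi(\zeta_n)\to P_t\Phi(\zeta)$.

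For the second part, fix $\zeta\in\mcC^{\alpha_0}$. From the decomposition of Definition~\ref{def:u} we have $u_t(\zeta)=w_t(\zeta)+v_t$. The pathwise convergence~\eqref{eq:conv_to_zeta} of Theorem~\ref{th:1dRemainderLWP} gives $\|w_t(\zeta)-\zeta\|_{\mcC^{\alpha_0}}\to 0$ as $t\to 0$ for each realisation of the driving path $v_{\cdot}$. Moreover, choosing $\kappa\in(0,1/2)$ sufficiently small so that $\alpha-2\kappa\geq\alpha_0$, Theorem~\ref{th:MarkovSHESpaceTimeRegular} yields
\[
\mbE\!\left[\,\sup_{s\in[0,T]}\frac{\|v_s\|_{\mcC^{\alpha_0}}^{p}}{s^{p\kappa}}\right]<\infty,
\]
and in particular $\|v_t\|_{\mcC^{\alpha_0}}\to 0$ almost surely as $t\to 0$ (using $v_0=0$). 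Combining these, $u_t(\zeta)\to\zeta$ in $\mcC^{\alpha_0}$ almost surely, so by continuity of $\Phi$ and bounded convergence, $P_t\Phi(\zeta)\to\Phi(\zeta)$.

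The only mild subtlety is that the initial data lives in the negative-regularity space $\mcC^{\alpha_0}$ while the solution at positive times lives in the stronger space $\mcC^{\alpha}$, but the continuous embedding $\mcC^{\alpha}\hookrightarrow\mcC^{\alpha_0}$ makes this harmless. No genuine obstacle arises: the joint local Lipschitz property of $\msS$ and the already-established short-time convergence $w_t\to\zeta$ do essentially all the work.
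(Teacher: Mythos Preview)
Your proof is correct and follows essentially the same approach as the paper: pathwise continuity of $\zeta\mapsto u_t(\zeta)$ from Proposition~\ref{prop:RemainderGWP} plus dominated convergence for the first part, and the short-time convergence~\eqref{eq:conv_to_zeta} plus dominated convergence for the second. You are in fact slightly more careful than the paper in the second part, since~\eqref{eq:conv_to_zeta} concerns $w_t\to\zeta$ rather than $u_t\to\zeta$, and you correctly supply the missing observation that $v_t\to 0$ in $\mcC^{\alpha_0}$ (which also follows simply from $v\in C_T\mcC^\alpha_0$ with $v_0=0$, without needing the H\"older estimate).
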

\begin{proof}
By
Proposition~\ref{prop:RemainderGWP},
$\mcC^{\alpha_0}(\mbT)\ni\zeta\mapsto u_t(\zeta)\in\mcC^{\alpha}(\mbT)$ is $\mbP$-a.s. continuous for every $t>0$.
The fact that $P_t\Phi \in \mcC_b(\mcC^{\alpha_0}(\mbT))$ thus follows from the dominated convergence theorem. The fact that $\lim_{t\searrow0}|P_t\Phi(\zeta)-\Phi(\zeta)|= 0$ for all $\zeta \in \mcC^{\alpha_0},\,\Phi\in \mcC_b(\mcC^{\alpha_0}(\mbT))$, similarly follows the dominated convergence theorem and~\eqref{eq:conv_to_zeta}.
\end{proof}
We are now in a position to show the existence of an invariant measure, $\nu_\zeta \in \mcP(\mcC^{\alpha_0}_\mfm)$ for every $\zeta \in \mcC^{\alpha_0}_\mfm(\mbT)$.
\begin{theorem}\label{thm:existence_invar_measures}
Let $\mfm \in \mbR$. Then for every $\zeta \in \mcC^{\alpha_0}_\mfm(\mbT)$, there exists a measure $\nu_\zeta \in \mcP(\mcC^{\alpha_0}_\mfm(\mbT))$ and an increasing sequence of times $t_k \nearrow \infty$ such that,
\begin{equation*}
\frac{1}{t_k}\int_0^{t_k} P^*_r \,\delta_\zeta \,\dd r \rightharpoonup \nu_\zeta, \quad \text{ as } k\rightarrow \infty.
\end{equation*}
Furthermore $\nu_\zeta$ is invariant for $(P_t)_{t>0}$.
\end{theorem}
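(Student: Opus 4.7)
The plan is to establish existence of an invariant measure via the Krylov–Bogoliubov theorem, combining the Feller property from Lemma \ref{lem:FellerSemiGroup} with tightness of the time-averaged measures $\mu_t := \frac{1}{t}\int_0^t P^*_r\delta_\zeta\,\dd r$, where tightness is obtained from the $L^p$ moment bound of Theorem \ref{thm:SolAPriori}.

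The key observation for tightness is that bounded balls in $L^p(\mbT)$ are precompact in $\mcC^{\alpha_0}(\mbT)$ for $p$ sufficiently large. Indeed, by the Besov embeddings recalled in Appendix \ref{app:HolderBesovSpaces}, $L^p \hookrightarrow \mcC^{-1/p-\epsilon}$ for every $\epsilon>0$, and since $\alpha_0 > -1/2$ one can choose an even integer $p \geq 2$ with $-1/p - \epsilon > \alpha_0$; the embedding $\mcC^{-1/p-\epsilon} \hookrightarrow \mcC^{\alpha_0}$ is then compact. Fix such a $p$ and let $t_0\in(0,1)$. For the set $K_R := \{f \in \mcC^{\alpha_0}_\mfm : \|f\|_{L^p}\leq R\}$, which is precompact in $\mcC^{\alpha_0}_\mfm$, Markov's inequality and Theorem \ref{thm:SolAPriori} give
\begin{equation*}
\mu_t(K_R^c) \,\leq\, \frac{t_0}{t} + \frac{1}{t}\int_{t_0}^t \mbP\bigl(\|u_r(\zeta)\|_{L^p}>R\bigr)\dd r \,\leq\, \frac{t_0}{t} + \frac{1}{R^p}\sup_{r\geq t_0}\mbE\bigl[\|u_r(\zeta)\|_{L^p}^p\bigr],
\end{equation*}
where the supremum is finite by \eqref{eq:1dSolexpectationApriori}. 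Given $\epsilon>0$, one first chooses $R$ so that the second term is below $\epsilon/2$ and then takes $t$ large enough that $t_0/t<\epsilon/2$, establishing tightness of $\{\mu_t\}_{t\geq T_\epsilon}$.

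By Prokhorov's theorem there exists a sequence $t_k\nearrow\infty$ and a probability measure $\nu_\zeta$ on $\mcC^{\alpha_0}$ such that $\mu_{t_k}\rightharpoonup\nu_\zeta$. Since the map $f\mapsto \bar f$ is continuous and linear, and since Corollary \ref{cor:1dSolConstantMean} together with the mean-free property of $v$ (Definition \ref{def:WhiteNoiseDefinition}\eqref{it:NoiseMeanFree}) ensures that $\bar{u}_r(\zeta)=\mfm$ for all $r\geq 0$, each $\mu_t$ is supported in the closed set $\mcC^{\alpha_0}_\mfm$, and so is the limit $\nu_\zeta$.

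Finally, invariance follows from the standard Krylov–Bogoliubov calculation: for any $\Phi\in\mcC_b(\mcC^{\alpha_0})$ and $t>0$, the Feller property of Lemma \ref{lem:FellerSemiGroup} gives $P_t\Phi\in\mcC_b(\mcC^{\alpha_0})$, so weak convergence and the semigroup property yield
\begin{equation*}
\int P_t\Phi\,\dd\nu_\zeta = \lim_{k\to\infty}\frac{1}{t_k}\int_0^{t_k} P_{r+t}\Phi(\zeta)\,\dd r = \lim_{k\to\infty}\Bigl(\int\Phi\,\dd\mu_{t_k} + \tfrac{1}{t_k}\bigl[\tsty\int_{t_k}^{t_k+t}-\int_0^t\bigr]P_r\Phi(\zeta)\,\dd r\Bigr) = \int\Phi\,\dd\nu_\zeta,
\end{equation*}
where the bracketed term vanishes since it is bounded by $2t\|\Phi\|_\infty/t_k$. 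The main obstacle is the singular bound $\mbE[\|u_r(\zeta)\|_{L^p}^p]\lesssim r^{-p/2}$ as $r\to 0$, which prevents directly bounding the Cesàro mean of these moments; splitting the time interval at $t_0>0$ and absorbing the contribution from $[0,t_0]$ trivially into $t_0/t$ resolves this.
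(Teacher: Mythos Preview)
Your proof is correct and follows essentially the same approach as the paper: tightness of the time-averaged measures via the compact embedding $L^p\hookrightarrow\mcC^{\alpha_0}$ combined with the uniform moment bound of Theorem~\ref{thm:SolAPriori}, followed by Prokhorov and the Krylov--Bogoliubov argument. The paper is terser---it restricts to $t\geq 1$ rather than splitting at $t_0$ and cites \cite[Thm.~3.1.1]{daprato_zabczyk_96} in lieu of writing out the invariance computation---but your added detail (including the justification that $\nu_\zeta$ is supported on $\mcC^{\alpha_0}_\mfm$) is all sound.
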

\begin{proof}
By the compact embedding $L^p\hookrightarrow \mcC^{\alpha_0}$ for sufficiently large $p\geq 1$,
it follows from Theorem~\ref{thm:SolAPriori} that the family of measures $\{P^*_t \delta_\zeta\}_{t\geq 1, \zeta\in\mcC^{\alpha_0}(\mbT)}$
is tight.
In particular, for every sequence of times $(t_k)_{k>0}$, $\left(R_{t_k}^* \delta_\zeta\right)_{k>0} := \left(\frac{1}{t_k}\int_0^{t_k} P^*_r \,\delta_\zeta \,\dd r\right)_{k>0}$ is a tight family of measures. Applying Prokhorov's theorem gives the existence of a sequence $(t_k)_{k>0}$ for which the necessary weak convergence holds. From \cite[Thm. 3.1.1]{daprato_zabczyk_96} we see that the limit measure is necessarily invariant for $(P_t)_{t> 0}$.
\end{proof}
\subsection{Strong Feller Property}\label{subsec:StrongFeller}
We prove that $(P_t)_{t>0}$ possesses the strong Feller property closely following the method employed in \cite{tsatsoulis_weber_18}. The main step is to establish a Bismut--Elworthy--Li (BEL) type formula, similar to that given in \cite[Thm.~5.5]{tsatsoulis_weber_18}.

For technical reasons in this section we replace \eqref{eq:exponentCriteria} with the assumption that $\alpha_0\in \left(-\frac{1}{3},0\right)$, $\alpha \in \left(0,\alpha_0+\frac{1}{3}\right)$ and $\eta >0$ such that,
\begin{equation}\label{eq:exponentCriteria2}
\frac{\alpha-\alpha_0}{2}<\eta< \frac{1}{6}.
\end{equation}
We note that all previous results also hold for this more restrictive parameter range and this restriction does not affect the main result as stated, see the proof of Theorem \ref{th:1dexpErgodic} at the conclusion of Section \ref{subsec:ExponentialErgodicity}.
Let $(e_m)_{m\in \mbZ}$, $e_m(x)=e^{i2\pi mx}$, be the usual Fourier basis elements of $L^2(\mbT)$ and $(\Delta_k)_{k\geq -1}$ be the Littlewood--Paley projection operators, see Appendix \ref{app:HolderBesovSpaces} for more details.
For $\varepsilon\in (0,1)$, we define $\Pi_\varepsilon(L^2(\mbT))$ as the space of real functions spanned by $\left\{ (e_m)_{|m|<\frac{1}{\varepsilon}}\right\}$ and
\[
\hat{\Pi}_\varepsilon := \sum_{ -1\leq k \leq -\log_{2}(9\varepsilon)} \Delta_k
\colon L^2(\mbT) \to \Pi_\eps(L^2(\mbT^d)).
\]
Observe that there exists $\ell_\eps\colon \mbZ \to [0,1]$ with $\ell_\eps(m)=0$ for $|m|\geq \frac1\eps$ such that $\mcF(\hat\Pi_\eps f)(m) = \ell_\eps(m) \mcF f(m)$ where $\mcF$ is the Fourier transform.
Furthermore (see e.g.~\cite[p.~1213]{tsatsoulis_weber_18}),
\begin{enumerate}[label=\roman*)]
\item \label{ass:ProjectionProperty1} $\displaystyle \|\hat{\Pi}_\varepsilon\|_{\text{Op}(\mcC^\beta;\,\mcC^{\beta})} :=\sup_{\|u\|_{\mcC^{\beta}}\leq1 }\|\hat{\Pi}_\varepsilon u\|_{\mcC^{\beta}} \leq 1$ for all $\eps\in(0,1)$ and $\beta \in \mbR$,
\item \label{ass:ProjectionProperty2} for every $\beta\in\mbR$ and $\delta>0$, there exists $C>0$ such that for all $\eps\in(0,1)$
\begin{equation*}
\|\hat{\Pi}_\varepsilon f-f\|_{\mcC^{\beta-\delta}} \leq C \varepsilon^\delta\|f\|_{\mcC^{\beta}}.
\end{equation*}
\end{enumerate}

We fix for the rest of the subsection $\delta>0$
\begin{equation}\label{eq:ApproxDeltaCriteria}
\alpha+\delta<\frac12,\quad
\eta -\frac{\alpha-\alpha_0+\delta}{2}>0,\quad\frac{1-\delta}{2}-3\eta >0. 
\end{equation}
Such a $\delta$ exists due to~\eqref{eq:exponentCriteria2}.

Consider now $Z\in C_T\mcC^{\alpha+\delta}$
and let $Z_{\eps;t} := \hat\Pi_\eps Z_t$.
Consider likewise $\zeta\in\mcC^{\alpha_0}$ and let $\zeta_\eps:=\hat\Pi_\eps\zeta$.
We define a smooth approximation to \eqref{eq:1dRepEquation}, with $\chi=1$ and deterministic noise, by
\begin{equation}\label{eq:1dApproximateEquation}
u_{\eps;t} = e^{t\Delta}\zeta_\eps + \int_0^t e^{(t-s)\Delta} \partial_x\hat\Pi_\eps(u^2_{\eps;s}\partial_x\rho_{u_{\eps;s}})\dd s + Z_{\eps;t},
\end{equation}
where, as usual,
\[
-\partial_{xx} \rho_{ u_{\varepsilon;t}} =u_{\varepsilon;t}-\bar{u}_\varepsilon, \text{ on }\mbT.
\]

\begin{remark}
We will later set, right before Theorem~\ref{th:BELFormula}, $Z_\eps=\hat\Pi_\eps v$, where $v$ is the SHE.
\end{remark}
\begin{theorem}\label{th:1dApproxEquation}
Let $T>0$, $\zeta \in \mcC^{\alpha_0}_\mfm(\mbT)$, and $Z\in C_T\mcC^{\alpha+\delta}$.
For every $\bar\eps>0$,
there exists $\eps_0(\bar\eps,T,\|\zeta\|_{\mcC^{\alpha_0}}+\|Z\|_{C_T\mcC^{\alpha+\delta}})>0$
such that, for every $\eps\in(0,\eps_0)$
there exists a unique solution $u_\varepsilon \in C_{T}\Pi_\eps(L^2(\T))$ to~\eqref{eq:1dApproximateEquation}. Furthermore
\begin{equation}\label{eq:ApproxUConverge} \|u_{t}-u_{\varepsilon;t}\|_{C_{\eta;T}\mcC^\alpha} \leq \bar\eps.
\end{equation}
\end{theorem}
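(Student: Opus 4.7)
The plan is to derive existence, uniqueness and the convergence estimate \eqref{eq:ApproxUConverge} together, via a continuation argument patterned on the short-time contraction of Theorem \ref{th:1dRemainderLWP}. Local existence of $u_\eps$ is essentially automatic: since $\hat{\Pi}_\eps$ projects onto the finite-dimensional space $\Pi_\eps(L^2(\mbT))$, equation \eqref{eq:1dApproximateEquation} reduces to a smooth autonomous ODE in the finitely many Fourier modes with $|m|<1/\eps$, and Picard--Lindelöf yields a unique maximal solution $u_\eps\in C([0,T_{\max,\eps});\Pi_\eps(L^2(\mbT)))$. Setting $M:=1+\|u\|_{C_{\eta;T}\mcC^\alpha}$, which is finite by Proposition \ref{prop:RemainderGWP}, and
\[
T_\eps := \sup\bigl\{t\in[0,T\wedge T_{\max,\eps})\,:\,\|u-u_\eps\|_{C_{\eta;t}\mcC^\alpha}\leq 1\bigr\},
\]
the aim is to show $T_\eps=T$ and $\|u-u_\eps\|_{C_{\eta;T}\mcC^\alpha}\leq\bar\eps$ for all sufficiently small $\eps$.

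The core of the argument is a quantitative comparison between $u$ and $u_\eps$ on a short interval. Subtracting the two mild equations and writing $N(u):=\partial_x(u^2\partial_x\rho_u)$, I decompose
\[
N(u_s)-\hat{\Pi}_\eps N(u_{\eps;s}) \;=\; (\id-\hat{\Pi}_\eps)N(u_s) \;+\; \hat{\Pi}_\eps\bigl[N(u_s)-N(u_{\eps;s})\bigr].
\]
Property \ref{ass:ProjectionProperty2} yields $\|\zeta-\zeta_\eps\|_{\mcC^{\alpha_0-\delta}}+\|Z-Z_\eps\|_{C_T\mcC^\alpha}\lesssim\eps^\delta(\|\zeta\|_{\mcC^{\alpha_0}}+\|Z\|_{C_T\mcC^{\alpha+\delta}})$; combined with \eqref{eq:HeatFlow} and \eqref{eq:1dNonLinearBounds}, this leads to a projection-error bound of order $\eps^\delta M^3 t^{1/2-3\eta-\delta/2}$ in $\mcC^\alpha$ for the sum of the initial, noise, and $(\id-\hat{\Pi}_\eps)N(u_s)$ pieces. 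Property \ref{ass:ProjectionProperty1} together with the trilinear expansion used in the contraction step of Theorem \ref{th:1dRemainderLWP} controls the remaining Lipschitz piece by $M^2 t^{1/2-3\eta}\|u-u_\eps\|_{C_{\eta;t}\mcC^\alpha}$ in $\mcC^\alpha$. By the strengthened assumptions \eqref{eq:exponentCriteria2} and \eqref{eq:ApproxDeltaCriteria} every exponent above is strictly positive, so multiplying by $t^\eta$ and absorbing the Lipschitz contribution for $T_*=T_*(M)>0$ small enough gives
\[
\|u-u_\eps\|_{C_{\eta;T_*}\mcC^\alpha}\leq C\eps^\delta(1+M^3).
\]

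To cover all of $[0,T]$, I would iterate the short-time comparison on subintervals $[kT_*,(k+1)T_*]$, restarting from $u_{kT_*}$ and $u_{\eps;kT_*}$ treated as new initial data in $\mcC^\alpha$ (for which the singular time weight at the restart point disappears). The definition of $T_\eps$ keeps $\|u_{\eps;kT_*}\|_{\mcC^\alpha}$ of size $O(M)$ uniformly in $k$, so the step length $T_*$ can be taken independent of $k$ and only $\lceil T/T_*\rceil$ iterations are needed. The accumulated error is then of the form $C(M,T)\eps^\delta$, and choosing $\eps_0(M,T,\bar\eps)$ with $C(M,T)\eps_0^\delta<\min(1,\bar\eps)$ closes the continuation (forcing $T_\eps=T$) and delivers \eqref{eq:ApproxUConverge}. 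The main obstacle I expect is balancing the $\eps^\delta$ rate coming from property \ref{ass:ProjectionProperty2} against the $(t-s)^{-1/2}$ heat-kernel singularity from the derivative in $N$ and the $s^{-3\eta}$ time weight from $C_{\eta;t}\mcC^\alpha$: ensuring that every time integral converges and leaves a positive power of $t$ behind is precisely what forces the strengthened parameter ranges \eqref{eq:exponentCriteria2} and \eqref{eq:ApproxDeltaCriteria}.
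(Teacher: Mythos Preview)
Your proposal is correct and follows essentially the same approach as the paper's proof: a short-time comparison estimate between $u$ and $u_\eps$ (the paper isolates this as Lemma~\ref{lem:u_approx_u}) using the decomposition into a projection-error piece controlled by Property~\ref{ass:ProjectionProperty2} and a Lipschitz piece controlled by Property~\ref{ass:ProjectionProperty1} together with the trilinear estimates of Theorem~\ref{th:1dRemainderLWP}, followed by iteration over $[0,T]$. The only cosmetic differences are that the paper obtains local existence of $u_\eps$ by rerunning the fixed-point argument of Theorem~\ref{th:1dRemainderLWP} rather than invoking Picard--Lindel\"of on the Galerkin ODE, and that the paper phrases the globalisation by directly invoking the a~priori estimate to bound $\|u_t\|_{\mcC^{\alpha_0}}$ on $[T_*,T]$ (yielding a uniform $\bar\mfR$ for the iteration step size) rather than packaging this through $M=1+\|u\|_{C_{\eta;T}\mcC^\alpha}$ and a continuation time $T_\eps$; note in particular that your claim ``of size $O(M)$ uniformly in $k$'' is really $O(M T_*^{-\eta})$, which is still a fixed constant once $T_*(M)$ is chosen, so the iteration closes as you describe.
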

\begin{proof}
It follows from Properties \ref{ass:ProjectionProperty1} and \ref{ass:ProjectionProperty2} of the projection and the same steps as in the proof of Theorem \ref{th:1dRemainderLWP} that for every $\varepsilon>0$ there exists $T_\eps(\|\zeta_\eps\|_{\mcC^{\alpha_0}}+\|Z_\eps\|_{C_T\mcC^\alpha})>0$,
such that $u_\varepsilon \in C_{\eta;T_\eps}\mcC^\alpha$ solves~\eqref{eq:1dApproximateEquation}
and satisfies $\|u_\varepsilon\|_{C_{\eta;T_\eps}\mcC^\alpha} \leq 1$.
The fact that $u_\eps\in C_{T_\eps}\Pi_\eps(L^2(\mbT))$ follows from the same argument as the end of Theorem~\ref{th:1dRemainderLWP} since $\zeta_\eps\in C^\infty$. 

To continue, we state the following

\begin{lemma}\label{lem:u_approx_u}
Consider $\mfR>0$
and define $D_\mfR$ as in Lemma~\ref{lem:short_time_Lip}.
There exists $T_*(\mfR)>0$ such that,
if $(\zeta_\eps,Z_\eps),(\tilde{\zeta},\tilde{Z})\in D_\mfR$, then
\begin{equation*}
\|\tilde{u}_t-u_{\varepsilon;t}\|_{C_{\eta;T_*}\mcC^{\alpha}} \leq  \|\zeta_\eps-\tilde{\zeta}\|_{\mcC^{\alpha_0-\delta}} + \varepsilon^{\delta} +\|\tilde{ Z}-Z_{\varepsilon}\|_{C_T\mcC^{\alpha}},
\end{equation*}
where $\tilde{u}=\msS(\tilde{\zeta},\tilde{Z})$ with $\msS$ as in Proposition~\ref{prop:RemainderGWP}.
\end{lemma}

\begin{proof}
There exists $T_*(\mfR)>0$ such that
$\|\tilde{u}\|_{C_{\eta;2T_*}\mcC^{\alpha}}+\|u_\eps\|_{C_{\eta;2T_\star}\mcC^{\alpha}}\leq 2$.
For $t \in (0,2T_*]$,
\begin{equation*}
\begin{aligned}
\|\tilde{u}_t-u_{\varepsilon;t} \|_{\mcC^{\alpha}} &\leq \|e^{t\Delta}(\tilde{\zeta}-\zeta_\eps
)\|_{\mcC^{\alpha}} + \int_0^t \|e^{(t-s)\Delta} \partial_x (\tilde{ u}^2_s\partial_x \rho_{\bar u_s} - \hat{\Pi}_\varepsilon (u^{2}_{\varepsilon;s} \partial_x\rho_{u_{\varepsilon;s}}))\|_{\mcC^{\alpha}}\,\dd s\\
&\quad + \|\tilde{ Z}_t-Z_{\varepsilon;t}\|_{\mcC^{\alpha}}.
\end{aligned}
\end{equation*}
Then
\begin{equation*}
\|e^{t\Delta}(\tilde{\zeta}-\zeta_\eps)
\|_{\mcC^{\alpha}}
\lesssim t^{-\frac{\alpha-\alpha_0+\delta}{2}}\|\tilde{\zeta}-\zeta_\eps \|_{\mcC^{\alpha_0-\delta}},
\end{equation*}
and applying the triangle inequality and using Property \ref{ass:ProjectionProperty2} of the projection,
\begin{align*}
\|\tilde{u}_s^2\partial_x \rho_{\bar u_s} - \hat{\Pi}_\varepsilon(u^2_{\varepsilon;s}\partial_x \rho_{u_{\varepsilon;s}})\|_{\mcC^{\alpha-\delta}}
&\leq \|\tilde{u}_s^2\partial_x \rho_{\tilde{u}_s} -u^2_{\varepsilon;s}\partial_x \rho_{u_{\varepsilon;s}} \|_{\mcC^{\alpha}}
\\
&\qquad
+ \|u^2_{\varepsilon;s}\partial_x \rho_{u_{\varepsilon;s}}-\hat{\Pi}_\varepsilon(u^2_{\varepsilon;s}\partial_x \rho_{u_{\varepsilon;s}})\|_{\mcC^{\alpha-\delta}}\\
&\lesssim s^{-2\eta}\|\tilde{u}_s-u_{\varepsilon;s}\|_{\mcC^{\alpha}} + \varepsilon^{\delta}s^{-3\eta}\\
&\leq s^{-3\eta}\|\tilde{u}-u_\eps\|_{C_{\eta;s}\mcC^\alpha} +\varepsilon^{\delta}s^{-3\eta}.
\end{align*}
Therefore, using that $\eta<\frac{1}{6}$, we may integrate $\|e^{(t-s)\Delta} \partial_x (\tilde{ u}^2_s\partial_x \rho_{\bar u_s} - \hat{\Pi}_\varepsilon (u^{2}_{\varepsilon;s} \partial_x\rho_{u_{\varepsilon;s}}))\|_{\mcC^{\alpha}}$ from $0$ to $t$ and take suprema to obtain
\begin{align*}
\sup_{t \in (0,T_*]}t^{\eta}\|\tilde{u}_s-u_{\varepsilon;s}\|_{\mcC^{\alpha}}
&\lesssim T_*^{\eta-\frac{\alpha-\alpha_0-\delta}{2}}\|\tilde{\zeta}-\zeta_\eps\|_{\mcC^{\alpha_0-\delta}} + T_*^{\frac{1-\delta}{2}-3\eta} \sup_{t \in (0,T_*]}t^{\eta}\|\tilde{u}_t-u_{\varepsilon;t}\|_{\mcC^{\alpha}}\\
&\quad\quad + T_*^{\frac{1-\delta}{2}-2\eta}\varepsilon^{\delta} 
+T_*^{\eta}\|\tilde{Z}-Z_{\varepsilon}\|_{C_T\mcC^{\alpha}}.
\end{align*}
After potentially lowering $T_*(\mfR)>0$ further, we obtain
\begin{equation*}
\sup_{t \in (0,T_*]}t^\eta\|\tilde{u}_t-u_{\varepsilon;t}\|_{\mcC^{\alpha}} \leq  \|\tilde{\zeta}-\zeta_\eps\|_{\mcC^{\alpha_0-\delta}} + \varepsilon^{\delta} 
+\|\tilde{Z}-Z_{\varepsilon}\|_{C_T\mcC^{\alpha}} .\qedhere
\end{equation*}
\end{proof}

To conclude the proof of Theorem~\ref{th:1dApproxEquation}, observe that $\|\zeta-\Pi_\eps\zeta\|_{\mcC^{\alpha_0-\delta}} \lesssim \eps^\delta\|\zeta\|_{\mcC^{\alpha_0}}$
and
$\|Z_t-Z_{\eps;t}\|_{\mcC^{\alpha}} \lesssim \eps^\delta\|Z_t\|_{\mcC^{\alpha+\delta}}$
by Property~\ref{ass:ProjectionProperty2}.
By the a priori estimate, for every $\mfR>0$ there exists $T_*(\mfR)>0$ as in Lemma~\ref{lem:u_approx_u} and
$\bar\mfR(\mfR)>0$
such that $\|u_t\|_{\mcC^{\alpha_0}}\leq \bar \mfR$ for all $t\in[T_*,T]$.
Iteratively applying Lemma~\ref{lem:u_approx_u} as in the proof of Proposition~\ref{prop:RemainderGWP}, there exists $\eps_0(\mfR)>0$ sufficiently small
such that $\|u_{\eps;t}\|_{\mcC^\alpha} \leq 2\bar\mfR$
on $[\bar T_*,T]$ for all $\eps<\eps_0$,
from which~\eqref{eq:ApproxUConverge} follows after another iterative application of Lemma~\ref{lem:u_approx_u}.
\end{proof}
By Theorem~\ref{th:1dApproxEquation}, for every $\mfR>0$ and $T>0$, there exists $\eps_0(\mfR,T)>0$ sufficiently small such that for all $\eps\in(0,\eps_0)$ the solution map to~\eqref{eq:1dApproximateEquation}
\begin{equation}\label{eq:1dRepApproxSolMap}
\begin{aligned}
\msS^\varepsilon \colon A_\mfR &\to C_{T}\Pi_\eps(L^2(\mbT))\\
\left(\zeta,Z\right)&\mapsto u_{\varepsilon},
\end{aligned}
\end{equation}
is well-defined
and $\|u_\eps-u\|_{C_{\eta;T}\mcC^\alpha}\leq 1$,
where $u=\msS(\zeta,Z)$ and $A_\mfR:= \{(\zeta, Z\} \in \mcC^{\alpha_0}\times C_T\mcC^{\alpha+\delta}\,:\,\|\zeta\|_{\mcC^{\alpha_0}}+\|Z\|_{C_T\mcC^{\alpha+\delta}}<\mfR\}$.

We now fix for the remainder of the section $\zeta\in\mcC^{\alpha_0}_\mfm$, $T>0$, and $Z\in C_T\mcC^{\alpha+\delta}$ with $Z_0=0$.
We let $\eps_0>0$ sufficiently small such that $\msS^\eps(\zeta,Z)$ is well-defined and $\|u_\eps-u\|_{C_{\eta;T}\mcC^\alpha}\leq 1$ for all $\eps\in (0,\eps_0)$.
In the sequel we will always let $\eps\in(0,\eps_0)$.

We define $N_\varepsilon:=\{m\in\mbZ\,:\,|m|<\frac1\eps,\;m\neq 0 \}$
and further assume that $Z_{\eps;t}:=\hat\Pi_\varepsilon Z_{t}$ is of the form
\begin{equation}\label{eq:ApproxZ}
Z_{\varepsilon;t} =\sum_{m\in N_\eps} \int_0^t\ell_\eps(m) e^{-4\pi^2|m|^2(t-s)}\dd\hat{B}_{m;s} \,e_m,
\end{equation}
where $\{\hat B_m\}_{m\in\mbZ,m\neq 0}$
are functions $\hat B_m\in C([0,T],\mbC)$ which satisfy the reality condition
\begin{equation}\label{eq:reality}
\hat B_{-m} = \overline{\hat B_m}.
\end{equation}
We denote
\begin{equation*}
\hat{B}_\varepsilon := (\hat{B}_m)_{m\in N_\eps} \in \bar C_T\mbC^{N_\varepsilon},
\end{equation*}
where $\bar C_T\mbC^{N_\varepsilon}\subset C_T\mbC^{N_\eps}$ is the subspace of $(\hat B_m)_{m\in N_\eps}$ which satisfies the reality condition~\eqref{eq:reality}.

\begin{remark}\label{rem:RS_integrals}
We will henceforth identify $\bar C_T\mbC^{N_\varepsilon}$ with a subspace of $C_T C^{\infty}_0\subset C_T\mcC^{\alpha+\delta}_0$ by mapping $(\hat B_m)_{m\in N_\eps}$ to $Z_\eps$ via~\eqref{eq:ApproxZ}.
Observe that the integral in~\eqref{eq:ApproxZ} is well-defined as a Riemann--Stieltjes integral for any $\hat B \in \bar C_T\mbC^{N_\varepsilon}$.
For fixed $\zeta\in\mcC^{\alpha_0}$,
we will treat in this way
$u_\eps(\zeta,Z):=\msS^\eps(\zeta,Z)$ as a function of $\hat B_\eps$ whenever it is well-defined.
\end{remark}

We now prove the differentiability of $\msS^\varepsilon$ with respect to both arguments separately, using $\mcD$ for derivatives with respect to $\hat{B}_\varepsilon$ and $D$ for derivatives with respect to $\zeta$. For $\mfR\geq 1$, we recall the definition of $T_*(\mfR)$ given by \eqref{eq:1dT(R)Def}.
\begin{lemma}[Derivative in Noise]\label{lem:NoiseDerivative}
There exists an open neighbourhood $\mcO_{\hat{B}_\varepsilon} \subset \bar C_T\mbC^{N_\varepsilon}$ containing $\hat{B}_\varepsilon$ such that $u_\varepsilon(\zeta,\cdot)$ is Fr{\'e}chet differentiable as a mapping from $\mcO_{\hat{B}_\varepsilon}$ to $C_{T}\Pi_\eps L^2(\T)$. Furthermore, for any $f \in \bar C_T\mbC^{N_\varepsilon}$, such that $f\tzero =0$, the directional derivative $\mcD_f u_\varepsilon $ satisfies the equation
\begin{equation}\label{eq:DerivativeInNoise}
\begin{aligned}
\mcD_{f} u_{\varepsilon;t}&= \int_0^t e^{(t-s)\Delta}\partial_x \hat{\Pi}_\varepsilon
\left(2u_{\varepsilon;s}\mcD_f u_{\varepsilon;s}\partial_x \rho_{u_{\varepsilon;s}} 
+ u^2_{\varepsilon;s}\partial_x \rho_{\mcD_f u_{\varepsilon;s}} \right)\,\dd s \\
&\quad + \sum_{m\in N_\eps} \int_0^t \ell_\eps(m )e^{-4\pi^2|m|^2(t-s)}\dd f_{m;s}e_m.
\end{aligned}
\end{equation}
Finally, for $\eps\in (0,\eps_0)$, there exists a $C(T,\,\|\zeta\|_{\mcC^{\alpha_0}},\,\|Z_\eps\|_{C_T\mcC^{\alpha}})>0$ such that,
\begin{equation}\label{eq:NoiseDerivativeLocalBnd}
\|\mcD_f u_{\varepsilon}\|_{C_{T}\mcC^{\alpha}} \leq C \|f\|_{C_T\mbC^{N_\varepsilon}}.
\end{equation}
\end{lemma}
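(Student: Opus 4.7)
My approach is to treat~\eqref{eq:DerivativeInNoise} as the defining mild equation for a candidate derivative $g := \mcD_f u_\eps$, show it admits a unique solution satisfying~\eqref{eq:NoiseDerivativeLocalBnd}, and then verify via a second-order Taylor expansion that $g$ is indeed the Fr{\'e}chet derivative. First I would use the local Lipschitz property of $\msS^\eps$ established in the proof of Theorem~\ref{th:1dApproxEquation}, combined with the continuous (and linear) dependence of $Z_\eps$ on $\hat B_\eps$, to produce an open neighbourhood $\mcO_{\hat B_\eps}\subset \bar C_T\mbC^{N_\eps}$ on which $u_\eps(\zeta,\cdot)$ is well-defined on $[0,T]$ with $\|u_\eps\|_{C_T\mcC^\alpha}\leq \mfR$ uniformly. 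This permits treating $u_\eps$ as a fixed continuous coefficient in the analysis of the linearised equation.

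Second, with $u_\eps$ fixed, \eqref{eq:DerivativeInNoise} is linear in $g$. Theorem~\ref{th:BesovProduct} with~\eqref{eq:BesovElliptic} yields
\[
\|2u_{\eps;s}g_s\partial_x\rho_{u_{\eps;s}} + u^2_{\eps;s}\partial_x\rho_{g_s}\|_{\mcC^\alpha} \lesssim \|u_{\eps;s}\|_{\mcC^\alpha}^2\|g_s\|_{\mcC^\alpha},
\]
and~\eqref{eq:HeatFlow} applied with the extra derivative $\partial_x$ produces an integrable $(t-s)^{-1/2}$ singularity. The inhomogeneous term is handled by integration by parts in the Riemann--Stieltjes integral (valid thanks to $f\bigtzero=0$), transferring the differential onto the smooth heat kernel and bounding the inhomogeneity in $C_T\mcC^\alpha$ by $C(\eps)\|f\|_{C_T\mbC^{N_\eps}}$. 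A Banach fixed-point argument on a short time interval, extended to $[0,T]$ by linearity, produces the unique $g\in C_T\Pi_\eps L^2$, and a singular Gr{\"o}nwall inequality applied to the mild estimate delivers~\eqref{eq:NoiseDerivativeLocalBnd}.

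Third, for Fr{\'e}chet differentiability, fix small $f$, set $\tilde u_\eps := u_\eps(\zeta,Z_\eps(\hat B_\eps + f))$, $\delta u := \tilde u_\eps - u_\eps$, and $r := \delta u - g$. Subtracting mild equations and using the exact expansion
\[
\tilde u_\eps^2\partial_x\rho_{\tilde u_\eps} - u_\eps^2\partial_x\rho_{u_\eps} - \bigl(2u_\eps\,\delta u\,\partial_x\rho_{u_\eps} + u_\eps^2\partial_x\rho_{\delta u}\bigr) = (\delta u)^2\partial_x\rho_{\tilde u_\eps} + 2u_\eps\,\delta u\,\partial_x\rho_{\delta u},
\]
I see that $r$ satisfies the same linear equation as $g$ but with inhomogeneity quadratic in $\delta u$ and controlled in $\mcC^\alpha$, via Theorem~\ref{th:BesovProduct} and~\eqref{eq:BesovElliptic}, by $(\|u_\eps\|_{C_T\mcC^\alpha}+\|\tilde u_\eps\|_{C_T\mcC^\alpha})\|\delta u\|^2_{C_T\mcC^\alpha}$. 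The Lipschitz bound from Step~1 gives $\|\delta u\|_{C_T\mcC^\alpha}\lesssim \|f\|_{C_T\mbC^{N_\eps}}$, so this forcing is $O(\|f\|^2)$; applying the Gr{\"o}nwall estimate of Step~2 to $r$ then yields $\|r\|_{C_T\mcC^\alpha} = O(\|f\|^2) = o(\|f\|)$, proving that $g = \mcD_f u_\eps$ in the Fr{\'e}chet sense.

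The main technical difficulty is interpreting and controlling the Riemann--Stieltjes integral $\sum_{m\in N_\eps}\int_0^t\ell_\eps(m)e^{-4\pi^2|m|^2(t-s)}\dd f_{m;s}\,e_m$ for a merely continuous driver $f$: integration by parts succeeds but introduces an $\eps$-dependent constant through $|m|^2\leq\eps^{-2}$. This dependence is acceptable here (since $\eps$ is fixed) but would need to be monitored carefully in any subsequent uniform-in-$\eps$ argument. A minor additional point is the mismatch between the target spaces $C_T\Pi_\eps L^2$ (finite-dimensional, natural for the Fr{\'e}chet statement) and $C_T\mcC^\alpha$ (used for the quantitative bound): norms are equivalent on $\Pi_\eps L^2$ up to $\eps$-dependent factors, so no genuine obstruction arises from the mismatch.
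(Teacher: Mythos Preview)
Your proposal is correct and takes a genuinely different route from the paper. The paper invokes the implicit function theorem for Banach spaces: it writes the fixed-point map $\Phi_T(z,\hat B_\eps)$, checks that $\Phi_T(u_\eps,\hat B_\eps)=0$ and that the partial Fr\'echet derivative $(\msD\Phi_{T_*})(u_\eps,\hat B_\eps)(\,\cdot\,,0)$ is a Banach space isomorphism on short intervals, then patches intervals to get differentiability on $[0,T]$; the equation~\eqref{eq:DerivativeInNoise} is read off afterwards by identifying the Fr\'echet and Gateaux derivatives, and the bound~\eqref{eq:NoiseDerivativeLocalBnd} follows from a Gr\"onwall argument identical to yours. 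By contrast you construct the candidate derivative $g$ directly as the solution of the linearised equation and then verify the Fr\'echet definition by an explicit second-order remainder estimate. Your approach is more elementary (no black-box IFT) and reuses the same Gr\"onwall machinery for both the bound and the differentiability step; it also gives an explicit $O(\|f\|^2)$ remainder, which would be useful if one later needed a modulus of differentiability or higher derivatives. The paper's IFT route is shorter to write and avoids the algebraic bookkeeping of the quadratic expansion. One small point: the local Lipschitz continuity of $\msS^\eps$ in $\hat B_\eps$ that you invoke in Steps~1 and~3 is not stated verbatim in the proof of Theorem~\ref{th:1dApproxEquation} (Lemma~\ref{lem:u_approx_u} compares $u_\eps$ with the \emph{unprojected} solution $\tilde u$), but the same short-time contraction estimate, applied to two approximate solutions with different drivers, immediately gives what you need.
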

\begin{proof}
Integration by parts implies that,
for any $m\in\mbZ$ and $f\in C_T\C$ with $f_0=0$
\begin{equation*}
\int_0^t e^{-4\pi^2|m|^2(t-s)}\dd f_s = f_t - 4\pi^2|m|^2\int_0^t e^{-4\pi^2|m|^2(t-s)}f_s\,\dd s.
\end{equation*}
It follows that $Z_{\varepsilon}$ is a bounded, linear function of $\hat{B}_\varepsilon$ with values in  $ C_T \Pi_\varepsilon L^2_0(\mbT)$, and so is continuously Fr{\'e}chet differentiable. Furthermore, for any $f \in \bar C_T\C^{N_\eps}$ with $f_0=0$
\begin{equation*}
\mcD_f Z_{\varepsilon;t} = \sum_{m\in N_\eps} \left(f_{m;t} - 4\pi^2 |m|^2\int_0^t e^{-4\pi^2|m|^2(t-s)}f_{m;s}\,\dd s\right)\ell_\eps(m)e_m,
\end{equation*}
Regarding the approximate solution, $u_{\varepsilon;t}$, the mappings $h \mapsto h^2$ and $h\mapsto \partial_x \rho_h$ are Fr{\'e}chet differentiable on $\Pi_\varepsilon(L^2(\mbT))$, so the map
\begin{multline*}
\Phi_T\colon (z,(f_m)_{m\in N_\eps})\, \mapsto \,-z + e^{\,\cdot\,\Delta}\zeta_\eps + \int_0^{\,\cdot\,} e^{(\,\cdot\, -s)\Delta} \partial_x \hat{\Pi}_\varepsilon \left(z_{s}^2 \partial_x \rho_{z_{s}}\right)\,\dd s
\\
+\sum_{m\in N_\eps} \int_0^{\,\cdot\,}\ell_\eps(m) e^{-4\pi^2|m|^2(t-s)}\dd f_{m;s} \,e_m,
\end{multline*}
is Fr{\'e}chet differentiable as a mapping $\Phi_T\colon (C_{T}\Pi_\varepsilon L^2(\mbT),\bar C_T\mbC^{N_\varepsilon}) \to C_T\Pi_\varepsilon L^2(\mbT)$
and is such that $\Phi_T(u_\varepsilon, \hat{B}_\eps) =0$.
Moreover, writing $\msD$ for the Fr\'echet derivative,
\[
(\msD \Phi_{T_*})(u_\eps,\hat B_\eps)(\,\cdot\,,0)
\colon C_{T_*}\Pi_\varepsilon L^2(\mbT) \to C_{T_*}\Pi_\varepsilon L^2(\mbT)
\]
is a Banach space isomorphism
for $T_*(\|u_\eps\|_{C_T\mcC^\alpha})>0$ sufficiently small.
Applying the implicit function theorem for Banach spaces, \cite[Thm. 2.3]{ambrosetti_prodi_93}, we obtain that $u_\eps(\zeta,\cdot)|_{[0,T_*]}$ is Fr{\'e}chet differentiable in a neighbourhood of $\hat B_\eps$.
Patching together a sufficient (but finite) number of intervals of length $T_*$ to cover $[0,T]$, we obtain the first claim.

To derive \eqref{eq:DerivativeInNoise}, recall that the Fr{\'e}chet and Gateaux derivatives of a Fr{\'e}chet differentiable map agree.
Hence, for any $f\in\bar C_T\C^{N_\eps}$, $\mcD_f u_{\varepsilon} = \frac{\dd}{\dd \lambda} \msS^\eps(\zeta,\hat B_\eps + \lambda f_\eps)\big|_{\lambda =0}$
from which~\eqref{eq:DerivativeInNoise} follows now from the approximate equation,~\eqref{eq:1dApproximateEquation}.
We finally show~\eqref{eq:NoiseDerivativeLocalBnd}.
By the triangle inequality, the properties of $\hat{\Pi}_\varepsilon$, \eqref{eq:BesovGradElliptic} and applying \eqref{eq:HeatFlow}, for any $t\in [0,T]$,
\begin{equation*}
\|\mcD_f u_{\varepsilon;t}\|_{\mcC^{\alpha}} \lesssim \|u_{\varepsilon}\|^2_{C_{\eta;t}\mcC^{\alpha}} \int_0^t s^{-\frac{1}{2}-2\eta} \|\mcD_f u_{\varepsilon;s}\|_{\mcC^{\alpha}}\,\dd s + \sup_{s\in [0,t]}|f_s|_{\mbC^{N_\varepsilon}}.
\end{equation*}
Therefore, by Gr{\"o}nwall's inequality, there exists $C>0$ such that
\begin{equation*}
\|\mcD_f u_{\varepsilon;t}\|_{\mcC^{\alpha}} \lesssim \|f\|_{C_T \mbC^{N_\varepsilon}}
\exp
\big(
Ct^{\frac{1}2-2\eta}\|u_\varepsilon\|^2_{C_{\eta;t}\mcC^{\alpha}}
\big).
\end{equation*}
Due to the global existence of $u_{\varepsilon}\in C_{\eta;T}\mcC^{\alpha}$ (shown in Theorem \ref{th:1dApproxEquation} for $\eps\in (0,\eps_0)$ where $\eps_0$ depends on $\|\zeta\|_{\mcC^{\alpha_0}}, \|Z_\eps\|_{C_T\mcC^{\alpha}}$), for a new constant $C(T,\|\zeta\|_{\mcC^{\alpha_0}},\|Z\|_{C_T\mcC^{\alpha}})>0$,
\begin{equation*}
\|\mcD_f u_{\varepsilon}\|_{C_{T}\mcC^{\alpha}}
\leq C
\|f\|_{C_{ T}\mbC^{N_\varepsilon}}.
\end{equation*}
\end{proof}
Regarding the derivative of $u_\varepsilon(\zeta)$ with respect to the initial condition, for $g \in \mcC^{\alpha_0}$, we set $g_\varepsilon := \hat{\Pi}_\varepsilon g $ and then for any $0\leq s\leq t \leq T$ we let $J_{s,t}^\varepsilon g$ solve the equation
\begin{equation}\label{eq:1dApproxJacobian}
J^\varepsilon_{s,t}g = e^{(t-s)\Delta} g_\eps + \int_s^t e^{(t-r)\Delta}\partial_x \hat{\Pi}_\varepsilon[2u_{\varepsilon;r} \partial_x \rho_{u_{\varepsilon;r}}J_{s,r}^\varepsilon g
+
u^2_{\varepsilon;r}\partial_x \rho_{J_{s,r}^\varepsilon g}]\,\dd r.
\end{equation}
We show below that for any $\zeta \in \mcC^{\alpha_0}(\mbT)$ and $t\in[0,T]$, $J^\varepsilon_{0,t}g = D_{g}u_\varepsilon(\zeta)$, the directional derivative of $u_\varepsilon(\zeta)$ in $\zeta$.
Note that $J^\varepsilon$
satisfies the flow property, that is for $0\leq s\leq t \leq T$ one has $J^{\varepsilon}_{0,t} = J^\varepsilon_{s,t}J^\varepsilon_{0,s}$. In particular $J^\eps_{s,s}=\id$.
\begin{lemma}\label{lem:JacobianWellPosedBound}
There exists an open neighbourhood $\mcO_{\zeta_\varepsilon} \subset \Pi_\varepsilon(L^2(\mbT))$ containing $\zeta_\varepsilon$ such that $u_\varepsilon(\,\cdot\,,\hat B_\eps)$ is Fr{\'e}chet differentiable as a mapping from $\mcO_{\zeta_\varepsilon}$ to $C_{T}\Pi_\varepsilon(L^2(\mbT))$.
For any $g \in \mcC^{\alpha_0}$, the derivative is given by $D_{g} u_{\varepsilon;t}(\zeta) = J_{0,t}^\varepsilon g$.
Furthermore, setting $\mfR=\|Z\|_{C_T\mcC^{\alpha}}+\|\zeta\|_{\mcC^{\alpha_0}}$,
there exists $C(\mfR)>0$ and $T_*(\mfR)>0$
such that for all $t \in (0,T_*]$
\begin{equation}\label{eq:ApproxJacobianCAlphaBound}
\sup_{s \in [0,t]} s^\eta\|J_{0,s}^\varepsilon g\|_{\mcC^{\alpha}} \leq C \|g\|_{\mcC^{\alpha_0}}.
\end{equation}
\end{lemma}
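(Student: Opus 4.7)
The plan is to mirror the strategy of Lemma \ref{lem:NoiseDerivative}, this time varying $\zeta_\varepsilon$ instead of $\hat B_\eps$. I would introduce the map
\[
\Psi_{T_*}\colon (\tilde\zeta_\eps, z)\mapsto -z + e^{\,\cdot\,\Delta}\tilde\zeta_\eps + \int_0^{\,\cdot\,} e^{(\,\cdot\,-s)\Delta}\partial_x\hat\Pi_\varepsilon(z_s^2\partial_x\rho_{z_s})\dd s + Z_{\varepsilon;\,\cdot\,},
\]
regarded as a smooth map $\Pi_\eps L^2(\mbT)\times C_{T_*}\Pi_\varepsilon L^2(\mbT)\to C_{T_*}\Pi_\varepsilon L^2(\mbT)$; smoothness is immediate because $\Pi_\eps L^2(\mbT)$ is finite dimensional and $z\mapsto z^2\partial_x\rho_z$ is a polynomial. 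Following exactly the proof of Lemma \ref{lem:NoiseDerivative}, the partial Fr\'echet derivative $\msD_z\Psi_{T_*}(\zeta_\eps,u_\eps)(\,\cdot\,,0)$ is a Banach space isomorphism for $T_*(\|u_\eps\|_{C_{T}\mcC^\alpha})>0$ sufficiently small, so the implicit function theorem gives Fr\'echet differentiability of $u_\eps(\,\cdot\,,\hat B_\eps)$ on $[0,T_*]$ in an open neighbourhood $\mcO_{\zeta_\eps}\subset \Pi_\eps L^2(\mbT)$; iterating over finitely many intervals of length $T_*$ covers $[0,T]$. Computing the Gateaux derivative in direction $g$ of the fixed-point equation~\eqref{eq:1dApproximateEquation} then reproduces~\eqref{eq:1dApproxJacobian} and shows $D_g u_{\varepsilon;t}(\zeta)=J^\varepsilon_{0,t}g$.

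For the bound \eqref{eq:ApproxJacobianCAlphaBound}, I would run a Volterra/Gr\"onwall argument on~\eqref{eq:1dApproxJacobian}. First, by \eqref{eq:HeatFlow} and Property~\ref{ass:ProjectionProperty1} of $\hat\Pi_\eps$,
\[
\|e^{t\Delta}g_\eps\|_{\mcC^\alpha}\lesssim t^{-\frac{\alpha-\alpha_0}{2}}\|g\|_{\mcC^{\alpha_0}}.
\]
For the integrand in~\eqref{eq:1dApproxJacobian}, Theorems~\ref{th:BesovProduct} and~\ref{th:BesovElliptic}, the derivative estimate \eqref{eq:BesovDerivative}, and~\eqref{eq:HeatFlow} give
\[
\|e^{(t-r)\Delta}\partial_x\hat\Pi_\varepsilon(2u_{\varepsilon;r}\partial_x\rho_{u_{\varepsilon;r}}J^\varepsilon_{0,r}g+u^2_{\varepsilon;r}\partial_x\rho_{J^\varepsilon_{0,r}g})\|_{\mcC^\alpha}\lesssim (t-r)^{-\frac12}\|u_{\varepsilon;r}\|^2_{\mcC^\alpha}\|J^\varepsilon_{0,r}g\|_{\mcC^\alpha}.
\]
Since $\eps\in(0,\eps_0)$, Theorem~\ref{th:1dApproxEquation} and the short-time bound in Theorem~\ref{th:1dRemainderLWP} (applied to the approximate equation, which can be carried out uniformly in $\eps$ by Properties~\ref{ass:ProjectionProperty1}--\ref{ass:ProjectionProperty2}) yield $r^\eta\|u_{\varepsilon;r}\|_{\mcC^\alpha}\leq C(\mfR)$ on $[0,T_*]$ for some $T_*(\mfR)>0$, so the integrand is bounded by $C(\mfR)(t-r)^{-\frac12}r^{-2\eta}\|J^\varepsilon_{0,r}g\|_{\mcC^\alpha}$.

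Multiplying the integral inequality by $t^\eta$ and taking the supremum over $t\in(0,T_*]$ yields
\[
\sup_{s\in(0,t]}s^\eta\|J^\varepsilon_{0,s}g\|_{\mcC^\alpha}\leq C\|g\|_{\mcC^{\alpha_0}} + C(\mfR)\,t^{\frac12-2\eta}\sup_{s\in(0,t]}s^\eta\|J^\varepsilon_{0,s}g\|_{\mcC^\alpha},
\]
where both exponents $\eta-\tfrac{\alpha-\alpha_0}{2}$ and $\tfrac12-2\eta$ are strictly positive by \eqref{eq:exponentCriteria}, and the integral in $r$ converges thanks to $\eta<\tfrac14$. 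Shrinking $T_*(\mfR)$ so that $C(\mfR)T_*^{\frac12-2\eta}\leq\tfrac12$ absorbs the second term on the right and gives~\eqref{eq:ApproxJacobianCAlphaBound}.

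The main obstacle is keeping the constants in the Volterra estimate independent of $\eps$: this hinges on the $\eps$-uniform short-time bound $r^\eta\|u_{\varepsilon;r}\|_{\mcC^\alpha}\leq C(\mfR)$, which is available only for $\eps<\eps_0(\mfR)$ thanks to Theorem~\ref{th:1dApproxEquation}. A small secondary point is that Fr\'echet differentiability is first produced in $\Pi_\eps L^2$; upgrading this to differentiability as a map into $C_T\mcC^\alpha$ is automatic because $\Pi_\eps L^2$ is finite-dimensional and all norms on it are equivalent, and the identification $D_g u_{\varepsilon;t}=J^\varepsilon_{0,t}g$ is consistent because the linear equation~\eqref{eq:1dApproxJacobian} has a unique solution by the same contraction argument used above.
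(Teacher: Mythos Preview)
Your proposal is correct and follows essentially the same route as the paper: an implicit function theorem argument (identical to Lemma~\ref{lem:NoiseDerivative}) for differentiability, followed by the same Volterra-type absorption estimate on~\eqref{eq:1dApproxJacobian} using the $\eps$-uniform short-time control $\|u_\varepsilon\|_{C_{\eta;T_*}\mcC^\alpha}\leq C(\mfR)$ and shrinking $T_*$. The only cosmetic differences are that the paper records the exponent as $t^{\frac12-\eta}$ rather than your (also valid) $t^{\frac12-2\eta}$, and that in this subsection the relevant parameter constraint is~\eqref{eq:exponentCriteria2} rather than~\eqref{eq:exponentCriteria}; neither affects the argument.
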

\begin{proof}
The same argument as in the proof of Lemma \ref{lem:NoiseDerivative} shows that the map $\Pi_\varepsilon L^2_\mfm(\mbT)\ni \zeta_\varepsilon\mapsto u_{\varepsilon}(\zeta)\in C_T\Pi_\varepsilon(L^2_\mfm(\mbT))$ is Fr{\'e}chet differentiable in a neighbourhood of $\zeta_\varepsilon$. It is then readily checked that on $\mcO_{\zeta_\varepsilon}$, for any $g \in \mcC^{\alpha_0}$ the Fr{\'e}chet derivative is equal to the map $t\mapsto J^\varepsilon_{0,t}g$.

To prove \eqref{eq:ApproxJacobianCAlphaBound}, observe that
\begin{equation*}
\|	J^\varepsilon_{0,s}g \|_{\mcC^{\alpha}} \lesssim s^{-\frac{\alpha-\alpha_0}{2}}\|g\|_{\mcC^{\alpha_0}} + \|u_{\varepsilon}\|^2_{C_{\eta;s}\mcC^{\alpha}}\int_0^s (s-r)^{-\frac{1}{2}}r^{-2\eta}\|J^\varepsilon_{0,r}g\|_{\mcC^{\alpha}}\,\dd r. 
\end{equation*}
Therefore, for any $t\in (0,T]$,
\begin{equation*}
\sup_{s\in [0,t]}s^\eta \|	J^\varepsilon_{0,s}g \|_{\mcC^{\alpha}} \lesssim t^{\eta-\frac{\alpha-\alpha_0}{2}}\|g\|_{\mcC^{\alpha_0}} + t^{\frac{1}{2}-\eta}\|u_{\varepsilon}\|^2_{C_{\eta;t}\mcC^{\alpha}}\sup_{s\in [0,t]}s^\eta \|	J^\varepsilon_{0,s}g \|_{\mcC^{\alpha}} .
\end{equation*}
Since
$\|u_\eps-u\|_{C_{\eta;T}\mcC^\alpha}\leq 1$,
by Theorem \ref{th:1dRemainderLWP}
there exists a $T_*(\mfR)\in (0,1)$ such that $\|u_{\varepsilon}\|_{C_{\eta;T_*}\mcC^{\alpha}}\leq 2$.
Hence, for all $t\in (0,T_*]$,
\begin{equation*}
\sup_{s \in [0,t]}s^\eta \|	J^\varepsilon_{0,s}g \|_{\mcC^{\alpha}} \lesssim t^{\eta-\frac{\alpha-\alpha_0}{2}}\|g\|_{\mcC^{\alpha_0}} + t^{\frac{1}{2}-\eta}\sup_{s \in [0,t]}s^\eta \|	J^\varepsilon_{0,s}g \|_{\mcC^{\alpha}},
\end{equation*}
so then choosing a sufficiently small time $t_1(\mfR) \in (0,t]$,
\begin{equation*}
\sup_{s \in [0,t_1]}s^\eta \|	J^\varepsilon_{0,s}g \|_{\mcC^{\alpha}} \lesssim t_1^{\eta-\frac{\alpha-\alpha_0}{2}}\|g\|_{\mcC^{\alpha_0}}.
\end{equation*}
Repeating this procedure, we find a constant $C:=C(\mfR)>0$ such that 
\begin{equation*}
\sup_{s \in [0,t]}s^\eta \|	J^\varepsilon_{0,s}g \|_{\mcC^{\alpha}} \leq C \|g\|_{\mcC^{\alpha_0}},\quad \forall\, t \in (0,T_*].
\end{equation*}
\end{proof}
We finally reintroduce probability and consider in the remainder of the section
a finite dimensional approximation $B_{\varepsilon;t}$ of the white noise
defined by
\begin{equation*}
B_{\varepsilon;t} := \sum_{m\in N_\eps} e_m\hat{B}_{t;m},\quad \hat B_{m;t} := \xi (\indic_{[0,t]}\times e_m), \quad \text{ for }m \in N_\eps.
\end{equation*}
Note that $(\hat B_m)_{m\in N_\eps}$ is a family of complex valued Brownian motions which satisfy the reality condition~\eqref{eq:reality}.
Our approximation of the SHE corresponding to~\eqref{eq:ApproxZ} is then
\begin{equation}\label{eq:ApproxStatSHE}
v_{\varepsilon;s,t}:=\hat{\Pi}_\varepsilon v_{s,t} =\sum_{m\in N_\eps} \int_s^t \ell_\eps(m) e^{-4\pi^2|m|^2(t-r)}\dd\hat B_{m;r} \,e_m,
\end{equation}
By Remark~\ref{rem:RS_integrals}, since $\hat B_\eps\in \bar C_T\C^{N_\eps}$,  $(s,t)\mapsto v_{\varepsilon;s,t}$ is well defined pathwise.
Furthermore, by Property~\ref{ass:ProjectionProperty2}, there exists a $\mbP$-null set $\mcN \subset \Omega$ such that, fixing any realisation $\xi(\omega)$ for $\omega \in \Omega\setminus\mcN$ gives realisations of $v_{\varepsilon}(\omega),\,v(\omega)$ and for which $v_{\varepsilon;0,\,\cdot\,}(\omega)\rightarrow v_{0,\,\cdot\,}(\omega)$ in $\mcC^\kappa_T\mcC^{\alpha-2\kappa}$ for every $\kappa \in [0,1/2)$.
In the rest of the subsection, we will let $v_\eps$ denote the random path $t\mapsto v_{\eps;0,t}$.

For each $\varepsilon>0$,
the Cameron--Martin space of $\hat B_\varepsilon$ is
\begin{multline*}
\mcC\mcM^\varepsilon = \bar W^{1,2}_0\left([0,T];\mbC^{N_\varepsilon}\right)
:= \bigg\{ f \in L^2([0,T];\,\mbC^{N_\varepsilon})\,:\,
\partial_t f \in L^2\left([0,T];\,\mbC^{N_\varepsilon}\right),
\,
\\
f_{-m} = \bar f_m,\,
f\tzero=0 \bigg\}.
\end{multline*}
By the Sobolev embedding, $W^{1,2}(\mbR)\hookrightarrow \mcC^{1/2}(\mbR)$, we see $\mcC\mcM^\varepsilon\subset \bar C_T \mbC^{N_\varepsilon}$. Therefore, Lemma~\ref{lem:NoiseDerivative} applies with $f \in \mcC\mcM^\varepsilon$. We also choose a smooth, compactly supported, cut-off function $\Theta :\mbR_+\rightarrow [0,1]$ such that $\Theta(z) = 1$ for $z<\frac{1}{2}$ and $\Theta(z)=0$ for $z\geq 1$.

We introduce the notion of right sided derivatives, $\mcD^+$, of $\|Z\|_{C_T\mcC^\alpha}$, which, for $f \in \bar C_T\mcC^{\alpha}$, is defined by
\begin{equation*}
\mcD^+_{f} \|Z\|_{C_T\mcC^{\alpha}}:= \lim_{\lambda \searrow 0} \frac{\|Z+\lambda f\|_{C_T\mcC^{\alpha}} -\|Z\|_{C_T\mcC^{\alpha}}}{\lambda}.
\end{equation*}
\begin{theorem}[Bismut--Elworthy--Li Formula]\label{th:BELFormula}
Let $t>0$, $\zeta \in \mcC^{\alpha_0}_\mfm(\mbT)$, $\Phi \in \mcC^1_b(\mcC^{\alpha_0}(\mbT))$ and $f^\varepsilon$ be a $\mcC\mcM^\varepsilon$ valued process taking for which $\partial_t f^\varepsilon$ is adapted and such that $\|\partial_t f^\eps\|_{L^2([0,t]; \mbC^{N_\varepsilon})}\leq C$, $\mbP$-a.s. for a deterministic constant $C:=C(t)$.
Then we have the identity
\begin{equation}\label{eq:BELFormula}
\begin{aligned}
\hspace{-0.4em}	\mbE\big[ \mcD_{f^\varepsilon} [\Phi(u_{\varepsilon;t}(\zeta,\hat B_\eps))]\Theta(\|v_{\varepsilon}\|_{C_t\mcC^{\alpha}})\big]
&=
\mbE\Big[
\Phi(u_{\varepsilon;t}(\zeta,\hat B_\eps))
\Theta(\|v_{\varepsilon}\|_{C_t\mcC^{\alpha}})\int_0^t \partial_t f^\varepsilon_s \cdot\,\dd \hat{B}_{\varepsilon;s} \Big]\\
&\quad -
\mbE\Big[\Phi(u_{\varepsilon;t}(\zeta,\hat B_\eps))\Theta'(\|v_{\varepsilon}\|_{C_t\mcC^{\alpha}})\mcD^+_{f^\eps} \|v_{\varepsilon}\|_{C_t\mcC^{\alpha}}
\Big],
\end{aligned}
\end{equation}
where we recall that $f^\eps$ is identified with a function in $C_t C^\infty_0$ as in Remark~\ref{rem:RS_integrals}.
\end{theorem}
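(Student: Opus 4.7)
The plan is to establish \eqref{eq:BELFormula} by perturbing $\hat B_\eps \mapsto \hat B_\eps + \lambda f^\eps$ and applying a Girsanov shift, in the Bismut--Elworthy--Li style of \cite{tsatsoulis_weber_18}. Since $\|\partial_t f^\eps\|_{L^2([0,t];\mbC^{N_\eps})} \leq C$ almost surely with deterministic $C$, Novikov's condition holds uniformly and the exponential
\begin{equation*}
\mcE_\lambda := \exp\left(\lambda \int_0^t \partial_t f^\eps_s \cdot \dd \hat B_{\eps;s} - \frac{\lambda^2}{2}\int_0^t |\partial_t f^\eps_s|^2 \dd s\right)
\end{equation*}
is a true $\mbP$-martingale. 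Girsanov's theorem then yields, for all $|\lambda|$ sufficiently small that $\msS^\eps(\zeta, \hat B_\eps + \lambda f^\eps)$ is well-defined via Theorem~\ref{th:1dApproxEquation},
\begin{equation*}
\mbE\big[\Phi(u_\eps(\zeta, \hat B_\eps + \lambda f^\eps))\Theta(\|v_\eps(\hat B_\eps + \lambda f^\eps)\|_{C_t\mcC^\alpha})\big] = \mbE\big[\Phi(u_\eps(\zeta, \hat B_\eps))\Theta(\|v_\eps\|_{C_t\mcC^\alpha})\, \mcE_\lambda\big].
\end{equation*}

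Next I would differentiate this identity at $\lambda = 0$ from the right. On the right-hand side, $\lambda \mapsto \mcE_\lambda$ is smooth with $\frac{\dd}{\dd\lambda}\big|_{\lambda=0}\mcE_\lambda = \int_0^t \partial_t f^\eps_s \cdot \dd \hat B_{\eps;s}$; since $\Phi$ and $\Theta$ are bounded and the $L^p$ norms of the difference quotient of $\mcE_\lambda$ are uniformly controlled in a neighbourhood of $\lambda = 0$ by the almost sure bound on $\|\partial_t f^\eps\|_{L^2}$, dominated convergence gives
\begin{equation*}
\frac{\dd}{\dd\lambda}\bigg|_{\lambda=0}\mbE\big[\Phi(u_\eps)\Theta(\|v_\eps\|_{C_t\mcC^\alpha})\mcE_\lambda\big] = \mbE\bigg[\Phi(u_\eps)\Theta(\|v_\eps\|_{C_t\mcC^\alpha}) \int_0^t \partial_t f^\eps_s \cdot \dd \hat B_{\eps;s}\bigg].
\end{equation*}

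On the left-hand side, Fr\'echet differentiability of $\hat B_\eps \mapsto u_\eps$ in direction $f^\eps$ (Lemma~\ref{lem:NoiseDerivative}) and linearity of $\hat B_\eps \mapsto v_\eps$, together with the product and chain rules for $\Phi \in \mcC^1_b$ and $\Theta \in C^1$, give the pointwise $\lambda\searrow 0$ limit of the difference quotient as
\begin{equation*}
\mcD_{f^\eps}[\Phi(u_\eps)]\, \Theta(\|v_\eps\|_{C_t\mcC^\alpha}) + \Phi(u_\eps)\Theta'(\|v_\eps\|_{C_t\mcC^\alpha})\mcD^+_{f^\eps}\|v_\eps\|_{C_t\mcC^\alpha}.
\end{equation*}
The one-sided $\mcD^+$ appears precisely because $\|\cdot\|_{C_t\mcC^\alpha}$ is convex-Lipschitz and only admits a right-directional derivative along a linear perturbation in general; convexity ensures this right-derivative exists pointwise. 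The uniform bound \eqref{eq:NoiseDerivativeLocalBnd}, combined with boundedness of $\Phi, \Theta, \Theta'$ and Lipschitz control of the norm, dominates the difference quotient by an integrable random variable, so dominated convergence permits interchange with $\mbE$. Equating the derivatives and rearranging yields \eqref{eq:BELFormula}.

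The principal technical obstacle is the asymmetry between the two sides of the Girsanov identity: the right-hand side is $C^\infty$ in $\lambda$ while the left-hand side is only right-differentiable because of the non-smooth norm, which forces the one-sided $\mcD^+$ in the final formula and requires careful one-sided dominated-convergence justifications. A secondary point is ensuring $\msS^\eps(\zeta, \hat B_\eps + \lambda f^\eps)$ is defined on a deterministic neighbourhood of $\lambda = 0$; this follows from Theorem~\ref{th:1dApproxEquation} combined with the almost sure bound on $\|f^\eps\|_{C_t\mcC^{\alpha+\delta}}$ inherited from the embedding $\mcC\mcM^\eps \hookrightarrow \bar C_t\mbC^{N_\eps}$ via the Sobolev embedding $W^{1,2}\hookrightarrow\mcC^{1/2}$.
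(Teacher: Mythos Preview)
Your overall strategy (Girsanov shift plus differentiation at $\lambda=0$) is the same as the paper's, but the Girsanov identity you invoke is not correct for a \emph{random} adapted shift $f^\eps$, which is precisely the setting of the theorem. The equality
\[
\mbE\big[\Phi(u_\eps(\zeta,\hat B_\eps+\lambda f^\eps))\,\Theta(\|v_\eps(\hat B_\eps+\lambda f^\eps)\|)\big]
=\mbE\big[\Phi(u_\eps)\,\Theta(\|v_\eps\|)\,\mcE_\lambda\big]
\]
is the Cameron--Martin formula and holds only for deterministic $f^\eps$. For random adapted $f^\eps$ it fails already at third order in $\lambda$: take a one–dimensional Brownian motion $B$ on $[0,1]$, $\dot f_s=\mathrm{sgn}(B_s)$, and $F(b)=b_1^2$; then the left side is the quadratic polynomial $1+\tfrac{4\lambda}{3}\sqrt{2/\pi}+\tfrac{\lambda^2}{2}$, while $\tfrac{d^3}{d\lambda^3}\big|_{0}\mbE[B_1^2\mcE_\lambda]=\tfrac{4}{5}\sqrt{2/\pi}\neq 0$. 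What Girsanov actually gives (and what the paper uses) is that under $d\mbP_\lambda=A^\lambda_t\,d\mbP$ with $A^\lambda_t=\exp(-\lambda\int_0^t\partial_t f^\eps_s\cdot d\hat B_{\eps;s}-\tfrac{\lambda^2}{2}\int_0^t|\partial_t f^\eps_s|^2\,ds)$ the shifted process $\hat B_\eps+\lambda f^\eps$ is a Brownian motion, hence
\[
\lambda\ \mapsto\ \mbE\Big[\Phi\big(u_\eps(\zeta,\hat B_\eps+\lambda f^\eps)\big)\,\Theta\big(\|v_\eps(\hat B_\eps+\lambda f^\eps)\|\big)\,A^\lambda_t\Big]
\]
is constant. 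Differentiating this at $\lambda=0^+$ (after a uniform–integrability argument to justify the exchange) and applying the product rule to the three factors yields \eqref{eq:BELFormula}. Your computed derivatives of each side at $\lambda=0$ happen to coincide, because the first–order Malliavin integration by parts $\mbE[\mcD_{f^\eps}F]=\mbE[F\int\partial_t f^\eps\cdot d\hat B_\eps]$ \emph{is} valid for adapted $f^\eps$; but that fact is exactly what needs proof, and it does not follow from the identity you wrote.

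A secondary issue: the map $\msS^\eps(\zeta,\hat B_\eps+\lambda f^\eps)$ is \emph{not} defined on a deterministic $\lambda$-neighbourhood of $0$, since $\|v_\eps\|_{C_t\mcC^\alpha}$ is almost surely finite but unbounded. This is precisely the role of the cutoff $\Theta$: on the event $\{\|\mcT^\lambda_{f^\eps}v_\eps\|_{C_t\mcC^\alpha}>1\}$ the relevant terms vanish, and on its complement the norms are uniformly bounded so Theorem~\ref{th:1dApproxEquation} and the derivative bound \eqref{eq:NoiseDerivativeLocalBnd} apply. The paper's proof uses this explicitly when bounding the difference quotients.
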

\begin{proof}
For any $f^\varepsilon \in \mcC\mcM^\varepsilon$ and $\lambda >0$, define the shift operator $\mcT^\lambda_{f^\varepsilon} \hat{B}_\varepsilon := \hat{B}_\varepsilon+\lambda f^\varepsilon$.
Define further
\begin{align*}
	u^{\lambda;f}_{\varepsilon}(\zeta)& := \msS^\varepsilon\left(\zeta,	\mcT^\lambda_{f^\varepsilon} \hat{B}^\varepsilon\right),\\
	\mcT^\lambda_{f^\varepsilon} v_{\varepsilon;0,t} &:=\sum_{m\in N_\eps} \int_0^t \ell_\eps(m) e^{-4\pi^2|m|^2(t-r)}(\dd\hat{B}^m_r +\lambda\partial_t f^\varepsilon_{m;r} \,\dd r)\,e_m.
\end{align*}
We now look for a family of measures $\mbP_\lambda$ such that the law of $\mcT^\lambda_{f^\varepsilon}\hat{B}^\varepsilon$ under $\mbP_\lambda$ is equal to the law of $\hat{B}^\varepsilon$ under $\mbP$ for each $\lambda$.
The assumptions on $f^\eps$ imply that Novikov's condition~\cite[Ch. 8, Prop. 1.15]{revuz_yor_08} is satisfied by $\lambda \partial_t f^\eps$.
Hence, by Girsanov's theorem~\cite[Ch. 4, Thm. 1.4]{revuz_yor_08}, the desired measure $\mbP_\lambda$ is given by
\begin{equation*}
    	\frac{\dd\mbP_\lambda}{\dd \mbP} := A^\lambda_t, \quad 	A_t^\lambda := \exp \left( -\lambda \int_0^t \partial_t f^\varepsilon_s \,\cdot \dd \hat{B}^\varepsilon_s -\frac{\lambda^2}{2}\int_0^t |\partial_tf^\varepsilon_s|^2\,\dd s\right).
\end{equation*}
As a consequence, and using $\mbE_{\mbP_\lambda}\left[\,\cdot\,\right] = \mbE \left[ \,\cdot\, A^\lambda_t\right]$, we have
\begin{equation*}
	\frac{\dd}{\dd\lambda}\mbE\left[ \Phi(u^{\lambda;f^\varepsilon}_{\varepsilon;t}(\zeta))\Theta(\|\mcT^\lambda_{f^\varepsilon} v_{\varepsilon}\|_{C_t\mcC^{\alpha}})A^{\lambda }_t \right] \bigg|_{\lambda=0+}= 0.
\end{equation*}
We now show that we may exchange differentiation and integration in the above expression, after which we will derive the BEL formula, \eqref{eq:BELFormula}.
%
\begin{comment}
For such a family of measures, we would have the identity
%
\begin{equation*}
	\frac{\dd}{\dd\lambda}\mbE_\lambda\left[ \Phi(u^{\lambda;f^\varepsilon}_{\varepsilon;t}(\zeta))\Theta(\|\mcT^\lambda_{f^\varepsilon} v_{\varepsilon}\|_{C_t\mcC^{\alpha}})\right] \bigg|_{\lambda=0+}= 0,
\end{equation*}
%
where $\lambda=0+$ indicates that we only consider the limit $\lambda \searrow 0$ and $\mbE_\lambda$ is the expectation with respect to $\mbP_\lambda$. We will then  from this identity.
\end{comment}
%
Let us define the difference operator $\Delta_\lambda F := F(\lambda)-F(0)$ so that, for example,
\begin{equation*}
    \Delta_\lambda \Phi(u^{\,\cdot\,;f^\varepsilon}_{\eps;t}(\zeta)) = \Phi(u^{\lambda;f^\varepsilon}_{\eps;t}(\zeta))- \Phi(u_{\eps;t}(\zeta)).
\end{equation*}
We demonstrate that the family $\{\frac{1}{\lambda}(\Delta_{\lambda}(\Phi(u^{\,\cdot\,;f^\varepsilon}_{\varepsilon;t}(x))\Theta(\|\mcT^{\,\cdot\,}_{f^\varepsilon} v_{\varepsilon;0,\,\cdot\,}\|_{C_T\mcC^{\alpha}})A^{\,\cdot\,}_t))\}_{\lambda \in (0,1]}$ is $\mbP$-uniformly integrable.
For any $\lambda\in (0,1]$ we observe that,
\begin{align*}
    \Delta_{\lambda}(\Phi(u^{\,\cdot\,;f^\varepsilon}_{\varepsilon;t}(\zeta))\Theta(\|\mcT^{\,\cdot\,}_{f^\varepsilon} v_{\varepsilon}\|_{C_t\mcC^{\alpha}})A^{\,\cdot\,}_t) &= \Delta_{\lambda}\Phi(u^{\,\cdot\,;f^\varepsilon}_{\varepsilon;t}(\zeta))\Theta(\|\mcT^{\lambda}_{f^\varepsilon} v_{\varepsilon}\|_{C_t\mcC^{\alpha}})A^{\lambda}_t\\
     &\quad +\Phi(u_{\varepsilon;t}(\zeta))\Delta_{\lambda}\Theta(\|\mcT^{\,\cdot\,}_{f^\varepsilon} v_{\varepsilon}\|_{C_t\mcC^{\alpha}})A^{\lambda}_t \\
    &\quad+\Phi(u_{\varepsilon;t}(\zeta))\Theta(\|v_{\varepsilon}\|_{C_t\mcC^{\alpha}})\Delta_{\lambda} A^{\,\cdot\,}_t
\end{align*}
For the first term, by the assumption that $\Phi\in \mcC^1_b(\mcC^{\alpha_0}(\mbT))$,
\begin{align*}
|\Delta_{\lambda}(\Phi(u^{\,\cdot\,;f^\varepsilon}_{\varepsilon;t}(\zeta))|&\leq \|\Phi\|_{\mcC^{1}_b} \|u^{\lambda;f^\varepsilon}_{\varepsilon;t}(\zeta) - u_{\varepsilon;t}(\zeta)\|_{\mcC^{\alpha_0}}\\
& \leq \|\Phi\|_{\mcC^{1}_b} \int_0^\lambda \left\|\mcD_{f}u^{\tilde{\lambda};f}_{\varepsilon;t}(\zeta)\right\|_{\mcC^{\alpha}}\dd \tilde{\lambda}.
\end{align*}
If $\|\mcT^{{\lambda}}_{f^\eps}v_\eps\|_{C_t\mcC^{\alpha}} > 1$, then the first term vanishes,
so suppose that $\|\mcT^{{\lambda}}_{f^\eps}v_\eps\|_{C_t\mcC^{\alpha}}\leq 1$.
Hence, due to the bound on $f^\eps$, there exists a deterministic $C>0$ such that $\|\mcT^{\tilde{\lambda}}_{f^\eps}v_\eps\|_{C_t\mcC^{\alpha}}\leq C$ for all $\tilde\lambda\in[0,\lambda]$.
Therefore, applying \eqref{eq:NoiseDerivativeLocalBnd} uniformly for $\tilde{\lambda}\in (0,\lambda]$, we have the bound
$$
|\Delta_{\lambda}\Phi(u^{\,\cdot\,;f^\varepsilon}_{\varepsilon;t}(\zeta))\Theta(\|\mcT^{\lambda}_{f^\varepsilon} v_{\varepsilon}\|_{C_t\mcC^{\alpha}})A^{\lambda}_t| \,\lesssim_{f,\Theta,t} \lambda \|\Phi\|_{\mcC^1_b} A^\lambda_t,
$$
where the implicit constant is independent of the realisation of $v$ and $\lambda \in (0,1]$. For the second term, arguing similarly and using our assumptions on $\Theta$ we obtain the similar bound,
$$
|\Phi(u_{\varepsilon;t}(\zeta))\Delta_{\lambda}\Theta(\|\mcT^{\,\cdot\,}_{f^\varepsilon} v_{\varepsilon}\|_{C_T\mcC^{\alpha}})A^{\lambda}_t|\,\lesssim_{f,\Theta,t} \lambda \|\Phi\|_{\infty} A^\lambda_t.
$$
Finally, for the third term,
$$
|\Phi(u_{\varepsilon;t}(\zeta))\Theta(\| v_{\varepsilon}\|_{C_t\mcC^{\alpha}})\Delta_{\lambda} A^{\,\cdot\,}_t| \lesssim_{\Theta} \|\Phi\|_{\infty} \int_0^\lambda \left|\frac{\dd}{\dd \lambda} A^{\tilde{\lambda}}_t\right|\dd \tilde{\lambda}.
$$
Therefore there exists a constant $C(f,\Theta,t,\|\Phi\|_{\mcC^1_b})>0$ such that for all $\lambda \in (0,1]$ and $p\geq 1$
$$
\mbE\left[\left|\frac{ \Delta_{\lambda}(\Phi(u^{\,\cdot\,;f^\varepsilon}_{\varepsilon;t}(\zeta))\Theta(\|\mcT^{\,\cdot\,}_{f^\varepsilon} v_{\varepsilon}\|_{C_t\mcC^{\alpha}})A^{\,\cdot\,}_t)}{\lambda}\right|^p\right] \leq C \left(\sup_{\lambda \in (0,1]}\mbE[(A^\lambda_t)^p]+ \sup_{\lambda\in (0,1]}\mbE\left[\left|\frac{\dd}{\dd \lambda} A^{\lambda}_t\right|^p\right]\right),
$$
where we applied Jensen's inequality to obtain $(\int_0^\lambda |\frac{\dd}{\dd \lambda} A^{\tilde{\lambda}}_t|\dd \tilde{\lambda})^p \leq
\lambda^{p-1}\int_0^\lambda |\frac{\dd}{\dd \lambda} A^{\tilde{\lambda}}_t|^p\dd \tilde{\lambda}$.
For the first term we have $(A^\lambda_t)^p = A^{p\lambda}_t \exp\left(\frac{p^2-p}{2}\lambda^2\int_0^t |\partial_t f^\eps_s|^2\dd s\right)$ where $A^{p\lambda}$ is also a strictly positive martingale with expectation $1$ and $\exp\left(\frac{p^2-p}{2}\lambda^2\int_0^t |\partial_t f^\eps_s|^2\dd s\right)$ is uniformly bounded in $L^\infty(\Omega;\mbR)$ across $\lambda \in (0,1]$ due to the assumptions on $f^\eps$. Regarding the second term, using the explicit form,
\begin{equation}\label{eq:ADiVIdentity}
  \frac{\dd}{\dd \lambda} A^{\lambda}_t= -A^{\lambda}_t \left(\int_0^t \partial_t f^\eps_s\cdot \dd \hat B_{\eps;s} + \lambda \int_0^t|\partial_t f^\eps_s|^2\dd s \right)
\end{equation}
and Cauchy--Schwarz, we see that
\begin{align*}
    \mbE\left[\left|\frac{\dd}{\dd \lambda} A^{\tilde{\lambda}}_t\right|^p\right]^2 \leq \mbE\left[(A^{\tilde{\lambda}}_t)^{2p}\right] \mbE\left[\left(\int_0^t \partial_t f^\eps_s\cdot \dd \hat B_s^\eps + \tilde{\lambda} \int_0^t|\partial_t f^\eps_s|^2\dd s \right)^{2p}\right].
\end{align*}
The first factor is uniformly bounded for all $\tilde{\lambda} \in (0,1]$ by the same arguments as we applied for $(A^\lambda_t)^p$. The second factor can be controlled using the Burkholder--Davis--Gundy inequality and again our assumption of uniform boundedness on $\|\partial_tf^\eps\|_{L^2([0,t])}$. Hence
$$
\sup_{\lambda \in (0,1]}\mbE\left[\,\left|\frac{ \Delta_{\lambda}(\Phi(u^{\,\cdot\,;f^\varepsilon}_{\varepsilon;t}(x))\Theta(\|\mcT^{\,\cdot\,}_{f^\varepsilon} v_{\varepsilon}\|_{C_t\mcC^{\alpha}})A^{\,\cdot\,}_t)}{\lambda}\right|^p\,\right] <\infty,
$$
and we have uniform integrability.
Therefore, by Vitali's convergence theorem, we may exchange differentiation and expectation to give that 
$$
\mbE\left[\frac{\dd}{\dd\lambda} \left(\Phi(u^{\lambda;f^\varepsilon}_{\varepsilon;t}(\zeta))\Theta(\|\mcT^\lambda_{f^\varepsilon} v_{\varepsilon}\|_{C_t\mcC^{\alpha}})A^{\lambda }_t\right)\bigg|_{\lambda=0+} \right] = 0.
$$
In order to conclude the proof it only remains to show that the identities
\begin{align*}
\mcD^+_{f^\varepsilon}\Phi\left(u_{\varepsilon;t}(\zeta)\right) &= \mcD_{f^\varepsilon}\Phi\left(u_{\varepsilon;t}(\zeta)\right),\\
\frac{\dd}{\dd\lambda}A^{\lambda }_t \bigg|_{\lambda=0+} &= -\int_0^t \partial_t f^\eps_s\cdot \dd\hat B_{\eps;s},\\
\frac{\dd}{\dd\lambda}\Theta\left(\|\mcT^\lambda_{f^\varepsilon} v_{\varepsilon}\|_{C_t\mcC^{\alpha}}\right)\bigg|_{\lambda=0+}A^{0 }_t &=\Theta'(\|v_{\varepsilon}\|_{C_t\mcC^{\alpha}})\mcD^+_{f^\varepsilon} \|v_{\varepsilon}\|_{C_t\mcC^{\alpha}}.
\end{align*}
The first follows directly from Lemma \ref{lem:NoiseDerivative}, since one sided derivatives and full derivatives agree for any Fr\'echet differentiable map. The second identity follows after setting $\lambda=0$ in \eqref{eq:ADiVIdentity} and observing that $A^0_t\equiv1$. The final identity follows in the same way from our assumptions on $\Theta$ and the chain rule.
\end{proof}
We can apply the identity \eqref{eq:BELFormula} to obtain an initial bound on the difference of the semi-group acting on elements of $\mcC^1_b(\mcC^{\alpha_0})$, from two initial points.
The following result and its proof is close to that
of~\cite[Prop.~5.8]{tsatsoulis_weber_18}.
\begin{proposition}\label{prop:SemiGroupPreLocalLip}
Let $\zeta,\,\tilde{\zeta} \in \mcC_\mfm^{\alpha_0}$ with $\tilde{\zeta} \in B_1(\zeta)$, set $\mfR:= 1+2\|\zeta\|_{\mcC^{\alpha_0}}$
and let $T_*(\mfR)>0$ be defined according to \eqref{eq:1dT(R)Def}. Then there exists a constant $C:=C(\mfR,\alpha,\alpha_0,\eta)>0$ and exponent $\theta:=\theta(\eta)>0$ such that for any $\Phi \in \mcC^1_{b}(\mcC^{\alpha_0}_\mfm)$
and $t \in (0,T_*]$,
\begin{equation}\label{eq:PointWiseSemiGroupBound}
|P_t\Phi(\zeta)-P_t\Phi(\tilde{\zeta})| \leq C\frac{1}{t^\theta} \|\Phi\|_{L^\infty} \|\zeta-\tilde{\zeta}\|_{\mcC^{\alpha_0}} + 2\|\Phi\|_{L^\infty} \mbP(\|v\|_{C_t\mcC^{\alpha}}>1).
\end{equation}
\end{proposition}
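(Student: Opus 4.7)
The plan is to establish \eqref{eq:PointWiseSemiGroupBound} at the level of the Galerkin approximation $u_\eps$ from Theorem~\ref{th:1dApproxEquation} and pass to the limit $\eps\to 0$. Set $g:=\zeta-\tilde\zeta$ and $\zeta_r:=\tilde\zeta+rg$ for $r\in[0,1]$, and recall from Lemma~\ref{lem:JacobianWellPosedBound} that the Fr\'echet derivative of $u_{\eps;t}$ at $\zeta_r$ in the direction $g$ is $J^\eps_{0,t}g_\eps$. The fundamental theorem of calculus gives
$$
\Phi(u_{\eps;t}(\zeta))-\Phi(u_{\eps;t}(\tilde\zeta))=\int_0^1 D\Phi(u_{\eps;t}(\zeta_r))\cdot J^\eps_{0,t}g_\eps\,\dd r.
$$
Inserting $\Theta(\|v_\eps\|_{C_t\mcC^\alpha})+(1-\Theta(\|v_\eps\|_{C_t\mcC^\alpha}))=1$, the $(1-\Theta)$-contribution to the expectation is bounded by $2\|\Phi\|_{L^\infty}\mbP(\|v_\eps\|_{C_t\mcC^\alpha}>1)$, provided we use a rescaled smooth cut-off with $\supp(1-\Theta)\subset(1,\infty)$; this only modifies implicit constants in the sequel.

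For the $\Theta$-piece I would apply Theorem~\ref{th:BELFormula} for each $r$ with a Cameron--Martin perturbation $f^{\eps,r}$ engineered so that $\mcD_{f^{\eps,r}}u_{\eps;t}(\zeta_r)=J^\eps_{0,t}g_\eps$ wherever $\Theta>0$. Introduce the $(\mcF_s)$-stopping time $\tau:=\inf\{s\geq 0:\|v_\eps\|_{C_s\mcC^\alpha}>c\}\wedge t$, with $c>1$ chosen so that $\{\tau<t\}\subset\{\Theta(\|v_\eps\|_{C_t\mcC^\alpha})=0\}$. Duhamel applied to~\eqref{eq:DerivativeInNoise} yields $\mcD_{f}u_{\eps;t}=\int_0^t J^\eps_{s,t}(\hat\Pi_\eps\partial_s f_s)\dd s$, while the flow property gives $J^\eps_{0,t}g_\eps=\tfrac1t\int_0^t J^\eps_{s,t}(J^\eps_{0,s}g_\eps)\dd s$. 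Matching these componentwise in the Fourier basis indexed by $N_\eps$ dictates
$$
\partial_s f^{\eps,r}_{m;s}:=\frac{\indic_{s\leq\tau}}{t\,\ell_\eps(m)}\,\widehat{J^\eps_{0,s}g_\eps}(m),\qquad m\in N_\eps,
$$
which is $(\mcF_s)$-adapted, satisfies the reality condition~\eqref{eq:reality}, and on $\{\tau=t\}$ fulfils $\hat\Pi_\eps\partial_s f^{\eps,r}_s=\tfrac1t J^\eps_{0,s}g_\eps$, hence the matching identity on the support of $\Theta$.

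The quantitative estimates then follow from Lemma~\ref{lem:JacobianWellPosedBound}: on $\{s\leq\tau\}$ one has $\|v_\eps\|_{C_s\mcC^\alpha}\leq c$, so the lemma applies uniformly in $r\in[0,1]$ with a deterministic $\mfR$, giving $\|J^\eps_{0,s}g_\eps\|_{\mcC^\alpha}\lesssim s^{-\eta}\|g\|_{\mcC^{\alpha_0}}$. Because $J^\eps_{0,s}g_\eps$ lies in the image of $\hat\Pi_\eps$, the Fourier quotient $\widehat{J^\eps_{0,s}g_\eps}(m)/\ell_\eps(m)$ is well-defined and $\ell^2$-summable, and one obtains $\int_0^t|\partial_s f^{\eps,r}_s|^2\dd s\lesssim t^{-1-2\eta}\|g\|_{\mcC^{\alpha_0}}^2$ deterministically. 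By It\^o's isometry and Cauchy--Schwarz the stochastic integral term in~\eqref{eq:BELFormula} is controlled by $C\|\Phi\|_{L^\infty}t^{-1/2-\eta}\|g\|_{\mcC^{\alpha_0}}$. Likewise $\mcD^+_{f^{\eps,r}}\|v_\eps\|_{C_t\mcC^\alpha}\leq\|\mcD_{f^{\eps,r}}v_\eps\|_{C_t\mcC^\alpha}\lesssim t^{-\eta}\|g\|_{\mcC^{\alpha_0}}$ via the explicit expression $\mcD_{f^{\eps,r}}v_{\eps;s}=\tfrac1t\int_0^{s\wedge\tau}e^{(s-r')\Delta}J^\eps_{0,r'}g_\eps\,\dd r'$. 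Taking $\theta:=\tfrac12+\eta$ and passing $\eps\to 0$ via Theorem~\ref{th:1dApproxEquation} together with bounded convergence yields~\eqref{eq:PointWiseSemiGroupBound}.

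The principal technical obstacle is the simultaneous requirement in Theorem~\ref{th:BELFormula} that $\partial_s f^{\eps,r}$ be $(\mcF_s)$-adapted and uniformly bounded in $L^2([0,t];\mbC^{N_\eps})$ by a deterministic constant; these are incompatible with the random, singular-at-$s=0$ behaviour of $\|J^\eps_{0,s}g_\eps\|_{\mcC^\alpha}$ unless one truncates in some way. The indicator $\indic_{s\leq\tau}$ is precisely what reconciles these constraints, while the choice $c>1$ and the support of $\Theta$ ensure that the matching identity $\mcD_{f^{\eps,r}}u_{\eps;t}(\zeta_r)=J^\eps_{0,t}g_\eps$ is preserved exactly on the set where it is needed.
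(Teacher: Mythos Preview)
Your approach follows the paper's closely: Galerkin approximation, cut-off $\Theta$, fundamental theorem of calculus along the segment $[\tilde\zeta,\zeta]$, then the Bismut--Elworthy--Li identity with a Cameron--Martin shift matched to the Jacobian flow, It\^o isometry for the stochastic integral, and finally $\eps\to 0$. The stopping time you introduce is a sensible refinement to guarantee the a.s.\ boundedness hypothesis in Theorem~\ref{th:BELFormula}; the paper does not localise in time and instead relies on the deterministic Jacobian bound of Lemma~\ref{lem:JacobianWellPosedBound} being available wherever $\Theta\neq 0$.

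The one substantive difference is your choice of shift. The paper takes $\partial_t f^\eps_{m;s}=\tfrac{1}{t}\langle J^\eps_{0,s}(\zeta-\tilde\zeta),e_m\rangle$ \emph{without} dividing by $\ell_\eps(m)$, so that $|\partial_t f^\eps_s|_{\mbC^{N_\eps}}^2=\tfrac{1}{t^2}\|J^\eps_{0,s}(\zeta-\tilde\zeta)\|_{L^2}^2$ and the bound $\int_0^t|\partial_t f^\eps_s|^2\dd s\lesssim t^{-1-2\eta}\|g\|_{\mcC^{\alpha_0}}^2$ is immediate from Lemma~\ref{lem:JacobianWellPosedBound}. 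Your division by $\ell_\eps(m)$ gives instead $|\partial_s f^{\eps,r}_s|^2=\tfrac{1}{t^2}\sum_m|\widehat{J^\eps_{0,s}g_\eps}(m)|^2/\ell_\eps(m)^2$, and the claimed $\ell^2$-bound does \emph{not} follow from the $\mcC^\alpha$-estimate on $J^\eps_{0,s}g_\eps$ alone: $\ell_\eps(m)$ can be arbitrarily small for $m$ near the boundary of $N_\eps$, so the bare assertion ``lies in the image of $\hat\Pi_\eps$, hence $\ell^2$-summable'' is not enough. What you actually need is an estimate on a pre-image, i.e.\ write $J^\eps_{0,s}g_\eps=\hat\Pi_\eps\tilde Y_s$ with $\tilde Y_s$ the mild solution of~\eqref{eq:1dApproxJacobian} with the two occurrences of $\hat\Pi_\eps$ removed, and bound $\|\tilde Y_s\|_{L^2}$ directly; this works (one gets $\|\tilde Y_s\|_{L^2}\lesssim s^{-\eta}\|g\|_{\mcC^{\alpha_0}}$ using $\eta>-\alpha_0/2$), but it is an additional step you have not supplied, and the constant must be uniform in $\eps$ for the limit passage. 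The paper's simpler choice sidesteps this entirely.
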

\begin{proof}
First, defining the semi-group for the approximate equation and applying the triangle inequality, for every $\Phi\in\mcC^1_b(\mcC^{\alpha_0})$
one obtains
\begin{align}
	|P^\varepsilon_t\Phi(\zeta)-P^\varepsilon_t\Phi(\tilde{\zeta})| &\leq \left|\mbE\left[ (\Phi(u_{\varepsilon;t}(\zeta)) - \Phi(u_{\varepsilon;t}(\tilde{\zeta})))\Theta(\|v_{\varepsilon}\|_{C_t\mcC^{\alpha} } ) \right]\right|\notag \\
	&\quad + \left|\mbE\left[ (\Phi(u_{\varepsilon;t}(\zeta))-\Phi(u_{\varepsilon;t}(\zeta)))\left(1-\Theta(\|v_{\varepsilon}\|_{C_t\mcC^{\alpha}})\right) \right]\right|\notag \\
	&=: I_1(t) + I_2(t).
\end{align}
For the second term,
\begin{equation}\label{eq:I2Bound}
	I_2(t)   \leq 2 \|\Phi\|_{L^\infty}\mbP\left( \|v_{\varepsilon}\|_{C_t\mcC^{\alpha}}>1 \right),
\end{equation}
and using the fact that $v_\eps \rightarrow v$ in $C_t\mcC^{\alpha}$ in probability
we obtain the second term on the right hand side of \eqref{eq:PointWiseSemiGroupBound}. Regarding the first term,
we use the fundamental theorem of calculus along with Fubini to write,
\begin{equation*}
	I_1(t)= \left|\int_0^1 \mbE \left[ D_{\zeta-\tilde{\zeta}} [\Phi(u_t(\zeta+\lambda(\zeta-\tilde{\zeta}))] \Theta(\|v_{\varepsilon}\|_{C_t\mcC^{\alpha}} ) \right]\dd\lambda\,\right|.
\end{equation*}
We now note that for any $f^\varepsilon \in \mcC\mcM^\varepsilon$, it follows from the mild forms of $\mcD_{f^\eps}u_{\eps;t}$ and $J^\eps_{s,t}\partial_t f^\eps_{m;s}$ given in  \eqref{eq:DerivativeInNoise} and \eqref{eq:1dApproxJacobian} respectively, that,
\begin{equation*}
\mcD_{f^\eps} u_{\varepsilon;t}(\zeta) = \sum_{m\in N_\eps}\int_0^t J^\varepsilon_{s,t}\partial_t f^\eps_{m;s} e_m \,\dd s.
\end{equation*}
Denote $z_\lambda:= \zeta-\lambda(\zeta-\tilde{\zeta})\in \mcC^{\alpha_0}$ and let $f^\varepsilon \in \mcC\mcM^\varepsilon$ such that $\partial_t f^{\eps}_{m;s} = \frac{1}{t}\langle J^\eps_{0,s}(\zeta-\tilde{\zeta}),e_m\rangle$ for every $m\in N_\eps$, where $J^{\varepsilon}_{0,s}(\zeta-\tilde{\zeta}) = D_{\zeta-\tilde{\zeta}} u_{\varepsilon;s}(z_\lambda)$ and for notational ease, we have suppressed the dependence on $z_\lambda$ in $J^{\varepsilon}_{0,s}(\zeta-\tilde{\zeta})$.
Then
\begin{equation}\label{eq:NoiseVsInitialDerivativeIdentity}
\mcD_{f^\varepsilon} u_{\varepsilon;t}(z_\lambda) = J_{0,t}^\varepsilon (\zeta-\tilde{\zeta}) = D_{\zeta-\tilde\zeta}u_{\varepsilon;t}(z_\lambda).
\end{equation}
Furthermore, for $f^\eps$ defined in this way, $\partial_t f^\eps\in L^\infty(\Omega;L^2([0,t];\mbC^{N_\eps}))$.
Note also that $\|z_\lambda\|_{\mcC^{\alpha_0}}\leq \mfR$ for all $\lambda \in (0,1)$, so the local bounds of Lemmas \ref{lem:NoiseDerivative} and \ref{lem:JacobianWellPosedBound} both hold uniformly in $z_\lambda$. Using \eqref{eq:NoiseVsInitialDerivativeIdentity} and \eqref{eq:BELFormula} we obtain the bound
\begin{multline*}
	\left|\mbE \left[ D_{\zeta-\tilde\zeta}[ \Phi(u_{\varepsilon;t}(z_\lambda))] \Theta(\|v_{\varepsilon}\|_{C_t\mcC^{\alpha}}) \right]\right|
	\\
	\leq \frac{1}{t}\|\Phi\|_{L^\infty} \Big(  \mbE\Big[ \,\Big| \int_0^t\langle J^{\varepsilon}_{0,s}(\zeta-\tilde{\zeta}),\dd W_{\varepsilon;s}\rangle \,\Theta(\|v_{\varepsilon}\|_{C_t\mcC^{\alpha} } ) \Big|\,\,\Big]
\\
	+	\mbE\left[\,\left|\, \Theta'(\|v_{\varepsilon}\|_{C_t\mcC^{\alpha} } )\mcD_{ f^\varepsilon}^+\|v_{\eps}\|_{C_t\mcC^{\alpha}} \,\right|\,\right] \Big).
\end{multline*}
For the first term, one can apply Cauchy--Schwarz, It{\^o}'s isometry and the results of Lemma \ref{lem:JacobianWellPosedBound}
to obtain
\begin{align}
	\mbE\left[ \,\left| \int_0^t\langle J^{\varepsilon}_{0,s}(\zeta-\tilde{\zeta}),\dd W_{\varepsilon;s}\rangle \,\Theta(\|v_{\varepsilon;0,\,\cdot\,}\|_{C_t\mcC^{\alpha} } ) \right|\right]
	&\leq
	\mbE\left[
	\int_0^t \| J^{\varepsilon}_{0,s}(\zeta-\tilde{\zeta})\|^2_{L^2}\dd s
	\right]^{1/2}\notag
	\\
	&\lesssim_{\mfR,\alpha,\alpha_0,\eta}  t^{\frac{1}{2}-\eta}\|\zeta-\tilde{\zeta}\|_{\mcC^{\alpha_0}}. \label{eq:I11Bound}
\end{align}
%
%(see the end of the proof of \cite[Prop. 5.8]{tsatsoulis_weber_18} for more details).
For the second term, using the explicit form of the directional derivative and Lemma \ref{lem:JacobianWellPosedBound} again applied to our choice of $f^\eps$, we have
\begin{align}
	\mbE\left[\,\left|\, \Theta'(\|v_{\varepsilon}\|_{C_t\mcC^{\alpha} } )\mcD_{ f^\varepsilon}^+\|v_{\eps}\|_{C_t\mcC^{\alpha}} \,\right|\,\right] &\leq t^{-\eta}\mbE\left[\sup_{s \in [0,t]}s^\eta\|J^\varepsilon_{0,s}(\zeta-\tilde{\zeta})\|_{\mcC^{\alpha}}|\Theta'(\|v_{\eps}\|_{C_t\mcC^{\alpha}})| \right] \notag\\
	&\leq C t^{-\eta}\|\zeta-\tilde{\zeta}\|_{\mcC^{\alpha_0}},\label{eq:I12Bound}
\end{align}
 where in the last line we also used that $\Theta'$ is bounded.

Combining \eqref{eq:I2Bound}, \eqref{eq:I11Bound} and \eqref{eq:I12Bound} yields the stated inequality,~\eqref{eq:PointWiseSemiGroupBound}, for the approximate semi-group $P^\eps$. Taking the limit $\eps \rightarrow 0$ and applying the dominated convergence theorem concludes the proof.
\end{proof}
We conclude this section by obtaining a local H{\"o}lder bound for the dual semi-group.
It follows from this bound that,  for all $t>0$, $P_t \colon \mcB_b(\mcC_\mfm^{\alpha_0})\rightarrow \mcC_b(\mcC_\mfm^{\alpha_0})$, which concludes our proof of the strong Feller property for $(P_t)_{t>0}$.
\begin{theorem}\label{th:SemiGroupLocalLipschitz}
Let $\zeta,\,\tilde{\zeta} \in \mcC_\mfm^{\alpha_0}$ with $\tilde{\zeta} \in B_1(\zeta)$. Then there exists $C>0$, $\theta \in (0,1)$ and $\sigma>0$ such that, for every $t \geq 1$,
\begin{equation*}
\|P^*_{t}\delta_\zeta - P^*_{t}\delta_{\tilde{\zeta}}\|_{\TV} \,\,\leq C(1+\|\zeta\|_{\mcC^{\alpha_0}})^\sigma\|\zeta-\tilde{\zeta}\|^\theta_{\mcC^{\alpha_0}}.
\end{equation*} 
\end{theorem}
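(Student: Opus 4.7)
The plan is to upgrade Proposition~\ref{prop:SemiGroupPreLocalLip} to a bound in total variation by (i) introducing a free cutoff parameter $L$ in place of the constant $1$ appearing there, (ii) extending the pointwise estimate from $\Phi\in\mcC^1_b$ to $\Phi\in\mcB_b$, and (iii) optimising over $L$ in terms of $\|\zeta-\tilde\zeta\|_{\mcC^{\alpha_0}}$; monotonicity of the total variation under the Markov semi-group then transports the resulting short-time bound to all $t\geq 1$.

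First I would rerun the proof of Proposition~\ref{prop:SemiGroupPreLocalLip} with $\Theta(\|v_\varepsilon\|_{C_t\mcC^\alpha})$ replaced by $\Theta_L(\cdot):=\Theta(\cdot/L)$ for a free parameter $L\geq 1$. On its support one has $\|v_\varepsilon\|_{C_t\mcC^\alpha}\leq L$, so the deterministic estimates underlying Lemmas~\ref{lem:NoiseDerivative}--\ref{lem:JacobianWellPosedBound} go through with $\mfR:=L+2\|\zeta\|_{\mcC^{\alpha_0}}$ and $T_*(\mfR)$ from~\eqref{eq:1dT(R)Def}. Tracking the polynomial dependence of the constants in the Bismut--Elworthy--Li identity~\eqref{eq:BELFormula} yields exponents $a,\theta_1>0$ depending only on $(\alpha_0,\alpha,\eta)$ and a constant $C>0$ such that, for every $\tau\in(0,T_*(\mfR)\wedge 1]$, $\tilde\zeta\in B_1(\zeta)$ and $\Phi\in\mcC^1_b$,
\begin{equation*}
|P_\tau\Phi(\zeta)-P_\tau\Phi(\tilde\zeta)|\,\leq\, C(L+\|\zeta\|_{\mcC^{\alpha_0}})^{a}\tau^{-\theta_1}\|\Phi\|_\infty\|\zeta-\tilde\zeta\|_{\mcC^{\alpha_0}} + 2\|\Phi\|_\infty\,\mbP\bigl(\|v\|_{C_\tau\mcC^\alpha}>L/2\bigr).
\end{equation*}

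Since the right hand side depends on $\Phi$ only through $\|\Phi\|_\infty$, a standard density argument on the Polish space $\mcC^{\alpha_0}_\mfm(\mbT)$ (Lusin's theorem followed by composition with the Fourier projections $\hat\Pi_\varepsilon$ from Section~\ref{subsec:StrongFeller} and finite-dimensional mollification, both of which preserve the sup norm) extends the estimate to every $\Phi\in\mcB_b$. Taking the supremum over $\|\Phi\|_\infty\leq 1$ then gives
\begin{equation*}
\|P^*_\tau\delta_\zeta-P^*_\tau\delta_{\tilde\zeta}\|_{\TV}\,\leq\, C(L+\|\zeta\|_{\mcC^{\alpha_0}})^{a}\tau^{-\theta_1}\|\zeta-\tilde\zeta\|_{\mcC^{\alpha_0}}+2\,\mbP\bigl(\|v\|_{C_\tau\mcC^\alpha}>L/2\bigr).
\end{equation*}
By Theorem~\ref{th:MarkovSHESpaceTimeRegular} and Markov's inequality, $\mbP(\|v\|_{C_1\mcC^\alpha}>L/2)\leq C_p L^{-p}$ for every $p\geq 1$; setting $\tau:=T_*(\mfR)\wedge 1$ and using~\eqref{eq:1dT(R)Def} gives $\tau^{-\theta_1}\leq C(1+\mfR)^{b}$ for some $b>0$. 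Choosing $L:=\|\zeta-\tilde\zeta\|_{\mcC^{\alpha_0}}^{-1/(p+a+b)}$ balances the two contributions and produces exponents $\theta\in(0,1)$ and $\sigma>0$ for which
\begin{equation*}
\|P^*_\tau\delta_\zeta-P^*_\tau\delta_{\tilde\zeta}\|_{\TV}\,\leq\, C(1+\|\zeta\|_{\mcC^{\alpha_0}})^{\sigma}\|\zeta-\tilde\zeta\|^{\theta}_{\mcC^{\alpha_0}}.
\end{equation*}
Since $\tau\leq 1\leq t$, the elementary contraction $\|P^*_s(\mu-\nu)\|_{\TV}\leq\|\mu-\nu\|_{\TV}$ (a direct consequence of $\|P_s\Phi\|_\infty\leq\|\Phi\|_\infty$) applied with $s=t-\tau$ completes the proof. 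The main obstacle is the careful rerun of the BEL argument with the free cutoff, together with the polynomial bookkeeping of the constants in $\mfR$; the $\mcC^1_b$-to-$\mcB_b$ extension is technical but routine given the Fourier projection machinery already in place.
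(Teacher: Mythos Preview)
Your approach is correct but differs from the paper's in the free parameter you optimise over. The paper keeps the cutoff $\Theta(\|v_\varepsilon\|_{C_t\mcC^\alpha})$ at level~$1$ and instead varies the \emph{time} $t\in(0,T_*(\mfR)]$: since $v_0=0$ and $v$ is H\"older in time (Theorem~\ref{th:MarkovSHESpaceTimeRegular}), one has $\mbP(\|v\|_{C_t\mcC^\alpha}>1)\lesssim t^{\theta_2}$ for some $\theta_2\in(0,1)$, so the two terms in~\eqref{eq:PointWiseSemiGroupBound} read $Ct^{-\theta_1}\|\zeta-\tilde\zeta\|_{\mcC^{\alpha_0}}$ and $Ct^{\theta_2}$, and balancing in $t$ gives the H\"older exponent; the constraint $t\leq T_*(\mfR)$ with $\mfR=1+2\|\zeta\|_{\mcC^{\alpha_0}}$ then produces the prefactor $(1+\|\zeta\|_{\mcC^{\alpha_0}})^\sigma$ via~\eqref{eq:1dT(R)Def}. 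You instead freeze $\tau=T_*(\mfR)$, introduce a variable cutoff level $L$, and use the spatial Gaussian tail $\mbP(\|v\|_{C_1\mcC^\alpha}>L/2)\leq C_pL^{-p}$; the price is that $\mfR$ now depends on $L$, so you must verify that the constants coming out of Lemmas~\ref{lem:NoiseDerivative}--\ref{lem:JacobianWellPosedBound} are polynomial in $\mfR$ on $[0,T_*(\mfR)]$ (they are, since $\|u_\varepsilon\|_{C_{\eta;T_*}\mcC^\alpha}\leq 2$ there and the iteration in Lemma~\ref{lem:JacobianWellPosedBound} covers $[0,T_*]$ in boundedly many steps). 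The paper's route is lighter because the $\mfR$-dependence enters only through $T_*(\mfR)$ and no rerun of the BEL machinery is needed; your route is arguably more robust in that it does not use $v_0=0$ but only the moment bounds of $v$. Both close with the same monotonicity step. For the passage from $\mcC^1_b$ to $\mcB_b$, note that the paper simply invokes \cite[Lem.~7.1.5]{daprato_zabczyk_96}, which directly converts the pointwise bound in Proposition~\ref{prop:SemiGroupPreLocalLip} to a total variation bound, making the Lusin/mollification detour unnecessary.
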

\begin{proof}[Sketch of Proof]
It follows from \cite[Lem.~7.1.5]{daprato_zabczyk_96} that \eqref{eq:PointWiseSemiGroupBound} is equivalent to the statement,
\begin{equation*}
    \|P^*_{t}\delta_\zeta - P^*_{t}\delta_{\tilde{\zeta}}\|_{\TV} \,\,\leq C\frac{1}{t^\theta}\|\zeta-\tilde{\zeta}\|_{\alpha_0} + 2\mbP(\|v\|_{C_t\mcC^{\alpha}}>1).
\end{equation*}
From this bound, one may follow exactly the steps of \cite[Thm.~5.10]{tsatsoulis_weber_18}. The key idea is to use the Gaussian nature of $v$ to obtain the control $\mbP(\|v\|_{C_t\mcC^{\alpha}}> 1) \lesssim t^{\theta_2}$ for some $\theta_2\in (0,1)$.
Then we use the explicit form of $T_\ast(\mfR)$ defined in \eqref{eq:1dT(R)Def} and monotonicity of the map $t\mapsto \|P^*_{t}\delta_\zeta - P^*_{t}\delta_{\tilde{\zeta}}\|_{\TV}$ to obtain the final result.
\end{proof}
\subsection{Full Support}\label{subsec:FullSupport}
We demonstrate that $u_T(\zeta)$, and thus any invariant measure $\nu_\zeta$ for $(P_t)_{t>0}$ as in Theorem~\ref{thm:existence_invar_measures},
has full support in $\mcC^{1/2-\delta}_{\bar{\zeta}}(\mbT)$ for any $\delta \in (0,1/2)$. In this subsection we are not concerned with the behaviour of the solution near zero, so until the start of Section \ref{subsec:ExponentialErgodicity} we just consider $\alpha \in (0,1/2)$ and $\mfm\in\mbR$. 

Let $L^2_0(\mbR_+\times \mbT)$ be the space of square integrable functions on $\mbR_+\times \mbT$ such that for any $t\geq 0$, $\bar{f}_t=0$. Then for any $T\geq 0$ we define the Cameron--Martin space of $v :=v_{0,\,\cdot\,}$,
\begin{equation*}
\msH_T := \left\{ h \colon [0,T] \times \mbT \to \mbR \,:\, h_t =\int_0^t e^{(t-s)\Delta}f_s\,\dd s,\, f \in L^2_0(\mbR_+ \times \mbT) \right\} ,
\end{equation*}
Note that by Theorems~\ref{th:BesovEmbedding} and~\ref{th:BesovHeatFlow}, $\msH_T$ is continuously and densely embedded in $\{h\in C_T\mcC^{\alpha}_0\,:\,h(0)=0\}$. The following is a direct consequence of the Cameron--Martin theorem.
\begin{lemma}[Theorem~3.6.1 of \cite{bogachev_1998}]\label{lem:SHESupport}
Let $T>0$ and $\mcL(v)= (v)\# \mbP \in \mcP(C_T\mcC^\alpha)$ be the law of $v$. Then $\supp (\mcL(v)) =\overline{\msH_T}^{\|\,\cdot\,\|_{C_T\mcC^{\alpha}}}$.
\end{lemma}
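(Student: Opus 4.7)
The plan is to recognize this as a direct application of the general fact (Bogachev, Theorem 3.6.1) that the topological support of a centered Radon Gaussian measure on a locally convex space equals the closure of its Cameron--Martin space. Thus the proof reduces to two verifications: (i) $\mcL(v)$ is a centered Radon Gaussian measure on $C_T\mcC^{\alpha}$, and (ii) the Cameron--Martin space of $\mcL(v)$ coincides with $\msH_T$ as a subset of $C_T\mcC^{\alpha}_0$.

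For (i), I would argue that for any finite collection $\ell_1,\ldots,\ell_n$ of continuous linear functionals on $C_T\mcC^{\alpha}$, the random vector $(\ell_1(v),\ldots,\ell_n(v))$ is centered Gaussian. This follows because $v$ is built as a Wiener integral of a deterministic kernel against white noise (Definition~\ref{def:SHE}); approximating each $\ell_i$ by functionals of the form $v\mapsto \langle v_{t_i},\phi_i\rangle$ with $\phi_i\in C^\infty(\mbT)$ and using the continuity of $t\mapsto v_{0,t}$ in $\mcC^{\alpha}_0$ shows that $\ell_i(v)$ is the $L^2(\mbP)$-limit of centered Gaussians. Radonness is automatic on the Polish subspace $\mcC^{\kappa}_T\mcC^{\alpha-2\kappa}_0\hookrightarrow C_T\mcC^{\alpha}_0$, in which $v$ lives $\mbP$-a.s. by Theorem~\ref{th:MarkovSHESpaceTimeRegular}.

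For (ii), I would compute the Cameron--Martin space explicitly. By definition, an element $h\in C_T\mcC^{\alpha}_0$ lies in the Cameron--Martin space iff there exists a random variable $\hat h\in \overline{\Span}^{L^2(\mbP)}\{\ell(v):\ell\in (C_T\mcC^{\alpha})^*\}$ such that $h_t(\phi) = \mbE[\hat h \cdot \langle v_t,\phi\rangle]$ for all $t\in[0,T]$, $\phi\in C^\infty(\mbT)$. Since every element of that closure is of the form $\xi(f)$ for some $f\in L^2_0([0,T]\times\mbT)$ (by the isometry $\xi:L^2_0\to L^2(\mbP)$ from Definition~\ref{def:WhiteNoiseDefinition}), and since stochastic Fubini combined with the definition~\eqref{eq:StochasticIntegral} of $v$ gives
\begin{equation*}
\mbE[\xi(f)\,\langle v_t,\phi\rangle]
=\int_0^t\int_{\mbT} f_s(x)\,(\mcH_{t-s}\ast\phi)(x)\,\dd x\,\dd s
=\Big\langle \int_0^t e^{(t-s)\Delta}f_s\,\dd s,\phi\Big\rangle,
\end{equation*}
one identifies $h_t=\int_0^t e^{(t-s)\Delta}f_s\,\dd s$. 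This is exactly the defining formula of $\msH_T$. Conversely any such $h$ is realised by $\hat h=\xi(f)$, so the Cameron--Martin space equals $\msH_T$ as a set; the Hilbert norm induced on $\msH_T$ is $\|h\|^2_{\msH_T}=\|f\|^2_{L^2_0}$.

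The only real obstacle is bookkeeping: one must choose the ambient space carefully so that $v$ is a genuine Radon Gaussian element (the version of Bogachev's theorem used requires this), and one must be careful that the map $f\mapsto (t\mapsto\int_0^t e^{(t-s)\Delta}f_s\,\dd s)$ is injective from $L^2_0$ into $C_T\mcC^{\alpha}_0$ so the Hilbert structure on $\msH_T$ is well defined (this follows from $(-\Delta)$ being invertible on mean-zero distributions together with the smoothing estimates in Appendix~\ref{app:SHERegularity}). Once these are checked, Bogachev's theorem yields $\supp(\mcL(v))=\overline{\msH_T}^{\|\cdot\|_{C_T\mcC^{\alpha}}}$ at once.
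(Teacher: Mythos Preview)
Your proposal is correct and takes essentially the same approach as the paper: the paper simply states the lemma as a direct consequence of the Cameron--Martin theorem (citing Bogachev, Theorem~3.6.1) without further proof, and you have supplied precisely the verifications---that $\mcL(v)$ is a centered Radon Gaussian measure on $C_T\mcC^\alpha$ and that its Cameron--Martin space is $\msH_T$---needed to justify that citation.
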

In the following theorem, we treat $\mcL(u_T(\zeta))$ as a probability measure on $\mcC^\alpha_\mfm(\mbT)$.
\begin{theorem}\label{th:SolFullSupport}
Let $T>0$, $\zeta \in \mcC^{\alpha}_{\mfm}(\mbT)$. Then
\begin{equation*}
\supp(\mcL(u_T(\zeta)))= \mcC^{\alpha}_{\mfm}(\mbT).
\end{equation*}
\end{theorem}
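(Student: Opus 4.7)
The plan is a Stroock--Varadhan style control argument. Fix $\phi \in \mcC^\alpha_\mfm$ and $\eps > 0$; I aim to produce a deterministic $h \in \msH_T$ with $\|\msS(\zeta,h)_T - \phi\|_{\mcC^\alpha} < \eps/2$. Once this is done, the joint local Lipschitz continuity of $\msS$ from Proposition~\ref{prop:RemainderGWP} yields some $\delta_0 > 0$ such that $\|\msS(\zeta,Z)_T - \msS(\zeta,h)_T\|_{\mcC^\alpha} < \eps/2$ whenever $\|Z - h\|_{C_T\mcC^\alpha} < \delta_0$. Combined with Lemma~\ref{lem:SHESupport}, which gives $\mbP(\|v - h\|_{C_T\mcC^\alpha} < \delta_0) > 0$ since $h \in \msH_T \subseteq \supp(\mcL(v))$, this yields $\mbP(\|u_T(\zeta) - \phi\|_{\mcC^\alpha} < \eps) > 0$, which is exactly the desired conclusion.

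To construct $h$, I would first use parabolic smoothing to regularise the initial datum. Let $w^0$ denote the unforced solution (i.e.\ $\msS(\zeta,0)$) on $[0,\tau]$ for small $\tau > 0$, and set $\zeta' := w^0_\tau$. Iterating Lemma~\ref{lem:spatial_reg} gives $\zeta' \in C^\infty(\mbT)$ with mean $\mfm$. Choose a smooth $\tilde\phi \in C^\infty_\mfm$ approximating $\phi$ (see discussion below) and a smooth interpolation $\psi \in C^\infty([\tau,T]\times\mbT)$ with $\psi_\tau = \zeta'$, $\psi_T = \tilde\phi$ and $\bar\psi_t \equiv \mfm$. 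Define
\begin{equation*}
f_t :=
\begin{cases}
0 & t\in[0,\tau],\\
\partial_t\psi_t - \partial_{xx}\psi_t - \chi\partial_x(\psi_t^2\partial_x\rho_{\psi_t}) & t\in[\tau,T].
\end{cases}
\end{equation*}
Each summand on $[\tau,T]$ has vanishing spatial mean, so $f \in L^2_0([0,T]\times\mbT)$, and $h_t := \int_0^t e^{(t-s)\Delta} f_s\,\dd s$ lies in $\msH_T$. Writing $u := \msS(\zeta,h) = w + h$, the decomposition yields $\partial_t u = \partial_{xx} u + \chi\partial_x(u^2\partial_x\rho_u) + f$ with $u_0 = \zeta$; on $[0,\tau]$ uniqueness forces $u = w^0$ so $u_\tau = \zeta'$, and on $[\tau,T]$ both $u$ and $\psi$ solve the same equation with the same initial datum, hence $u|_{[\tau,T]} = \psi$ and in particular $\msS(\zeta,h)_T = \tilde\phi$.

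The delicate step is the approximation of $\phi$ by $\tilde\phi \in C^\infty_\mfm$. Since $C^\infty$ is not dense in $\mcC^\alpha = \mcB^\alpha_{\infty,\infty}$ in its norm topology, a direct mollification only gives $\|\phi - \tilde\phi\|_{\mcC^{\alpha'}} \lesssim \kappa^{\alpha-\alpha'} \|\phi\|_{\mcC^\alpha}$ for any $\alpha' < \alpha$ strictly. For the applications to Theorem~\ref{th:1dexpErgodic}, which only requires full support in $\mcC^{1/2-\delta}_\mfm$ for $\delta>0$, this slack is harmless: one chooses $\alpha \in (1/2-\delta,1/2)$, carries out the control construction with $\tilde\phi$ satisfying $\|\phi-\tilde\phi\|_{\mcC^{1/2-\delta}}<\eps/4$, and invokes the continuous embedding $\mcC^\alpha \hookrightarrow \mcC^{1/2-\delta}$ in the Lipschitz step. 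Handling this density subtlety cleanly (and confirming that the mean-constraint is preserved throughout the mollification) is the main obstacle; the remainder reduces to verifying the $L^2_0$ membership of $f$ and combining Proposition~\ref{prop:RemainderGWP} with Lemma~\ref{lem:SHESupport}.
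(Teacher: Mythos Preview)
Your control argument is correct and very close in spirit to the paper's, but you have misidentified the main obstacle. In this paper $\mcC^\alpha(\mbT)=\mcB^\alpha_{\infty,\infty}(\mbT)$ is \emph{defined} as the completion of $C^\infty(\mbT)$ under the Besov norm (see the sentence following~\eqref{eq:NonHomBesovNorm} in Appendix~\ref{app:HolderBesovSpaces}). Hence $C^\infty_\mfm$ is dense in $\mcC^\alpha_\mfm$ by construction, and the step you flag as delicate is immediate; no detour through a smaller exponent $\alpha'<\alpha$ is needed.

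The paper's construction differs in detail. Rather than first smoothing the initial datum via the unforced flow and then interpolating between smooth endpoints, it writes down directly the trajectory $u^y_t = e^{t\Delta}\zeta + \tfrac{t}{T}(y-e^{T\Delta}\zeta)$ for a target $y\in C^\infty_\mfm$ and reads off the driver
\[
h^y_t = -\int_0^t e^{(t-s)\Delta}\partial_x\bigl((u^y_s)^2\partial_x\rho_{u^y_s}\bigr)\,\dd s + \tfrac{t}{T}(y-e^{T\Delta}\zeta).
\]
Since $\zeta$ is only $\mcC^\alpha$, this $h^y$ lies in $C_T\mcC^\alpha_0$ with $h^y_0=0$ but not a priori in $\msH_T$; the paper then approximates $h^y$ by elements of $C^\infty_0([0,T]\times\mbT)\cap\msH_T$ and invokes continuity of $\msS_T(\zeta,\cdot)$. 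Your route trades the approximation of the driver for an approximation of the target $\phi$, and pays for it with the parabolic bootstrap needed to reach $\zeta'\in C^\infty$. That bootstrap is standard for the deterministic equation with $Z=0$, but note that Lemma~\ref{lem:spatial_reg} as stated only gains regularity up to $\mcC^\beta$ with $\beta<\alpha+1<3/2$, and iterating it requires stepping outside the parameter range~\eqref{eq:exponentCriteria}; the paper's approach avoids this entirely. Both arguments are valid; the paper's is slightly more economical.
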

\begin{proof}
We first show 
\begin{equation}\label{eq:SolMapClosureInclusion}
\overline{\left\{ \msS_T(\zeta,h)\,:\,h\in \msH_T \right \}}^{\|\,\cdot\,\|_{\mcC^{\alpha}}}\subseteq \supp(\mcL(u_T(\zeta))) .
\end{equation}
Recall that the map $\msS_T(\zeta,\,\cdot\,)\colon C_T\mcC^{\alpha}_0(\mbT)\rightarrow \mcC^{\alpha}_\mfm$ is continuous and $\msH_T \subset C_T\mcC^{\alpha}_0(\mbT)$. Consider now $h\in \msH_T$. Then for any $\varepsilon >0$, there exists a $\delta>0$ such that
\begin{equation*}
\|v-h\|_{C_T\mcC^{\alpha}_0} <\delta\,\Rightarrow\,\|u_T-\msS_T(\zeta,h)\|_{\mcC^{\alpha}}<\varepsilon,
\end{equation*} 
and therefore
\begin{equation*}
\mbP(\|u_T-\msS_T(\zeta,h)\|_{\mcC^{\alpha}}<\varepsilon) \geq \mbP(	\|v-h\|_{C_T\mcC^{\alpha}_0} <\delta) >0,
\end{equation*}
where the last inequality follows from Lemma \ref{lem:SHESupport} and this shows \eqref{eq:SolMapClosureInclusion}. It now suffices to show that,
\begin{equation*}
    \overline{\left\{ \msS_T(\zeta,h)\,:\,h\in \msH_T \right \}}^{\|\,\cdot\,\|_{\mcC^{\alpha}}}= \mcC_\mfm^{\alpha}(\mbT).
\end{equation*}
Let $y\in C_{\mfm}^\infty(\mbT)$.
Then for $t\in[0,T]$ define
\begin{equation}
    u^y_t := e^{t\Delta}\zeta + \frac{t}{T}(y-e^{T\Delta}\zeta),
\end{equation}
and
\begin{equation*}
    h^y_t := -\int_0^t e^{(t-s)\Delta} \partial_x((u^y_s)^2\partial_x \rho_{u^y_s}) \dd s
    + \frac{t}{T}(y-e^{T\Delta}\zeta).
\end{equation*}
Since $\zeta\in \mcC^{\alpha}_\mfm$, we have $u^y \in C_T\mcC_\mfm^{\alpha}$ and it also follows that $h^y \in C_T\mcC_0^{\alpha}$ with $h^y(0)=0$.
Furthermore, by construction,
\begin{equation}
    \msS_T(\zeta,h^y)= u_T^y=y.
\end{equation}
Approximating $h^y$ by functions in $C^\infty_0([0,T]\times \mbT)\cap \msH_T$
and using the density of $C^\infty(\mbT)$ in $\mcC^{\alpha}(\mbT)$ concludes the proof.
\end{proof}
\subsection{Exponential Mixing}\label{subsec:ExponentialErgodicity}
In Theorem \ref{th:SemiGroupContract1} and Corollary \ref{cor:1dUniqueInvariant} below, we keep $(\alpha_0,\,\alpha)$ satisfying \eqref{eq:exponentCriteria2}, the more restrictive parameter set from the start of Subsection \ref{subsec:StrongFeller}.
\begin{theorem}\label{th:SemiGroupContract1}
There exists $\lambda \in (0,1)$ such that for every $\zeta,\,\tilde{\zeta} \in \mcC_\mfm^{\alpha_0}$ and $t\geq 1$,
\begin{equation}\label{eq:SemiGroupContract1}
\|P^*_t\delta_\zeta -P^*_t \delta_{\tilde{\zeta}}\|_{\TV} \,\,\leq \,\lambda.
\end{equation}
\end{theorem}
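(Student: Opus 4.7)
The plan is to combine the Lyapunov-type a priori estimate of Theorem~\ref{thm:SolAPriori}, the strong Feller bound of Theorem~\ref{th:SemiGroupLocalLipschitz}, and the full support property of Theorem~\ref{th:SolFullSupport} in a standard Doeblin--Harris reduction. The idea is that after a short burn-in time the law $P^*_{1/2}\delta_\zeta$ concentrates uniformly on a common compact subset of $\mcC^{\alpha_0}_\mfm$; on this compact set, strong Feller plus full support produces a common minorizing measure for $P^*_{1/2}\delta_y$; composing the two halves then yields the uniform contraction.

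First, by the Markov property and the contractivity of $\|\cdot\|_{\TV}$ under Markov kernels, the map $t \mapsto \|P^*_t\delta_\zeta - P^*_t\delta_{\tilde{\zeta}}\|_{\TV}$ is non-increasing, so it suffices to prove the bound at $t=1$. I would write $P^*_1 = P^*_{1/2}\circ P^*_{1/2}$. For the first half (Lyapunov step), I would combine the a priori $L^p$ bound of Theorem~\ref{thm:SolAPriori} at time $1/4$ with the Markov decomposition of Lemma~\ref{lem:MarkovDecompose} on $[1/4,1/2]$ and the local well-posedness of Theorem~\ref{th:1dRemainderLWP} to promote $L^p$-control at $t=1/4$ into $\mcC^\alpha$-control at $t=1/2$. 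This yields, for $p$ large, a uniform moment bound $\sup_\zeta \mbE[\|u_{1/2}(\zeta)\|^p_{\mcC^\alpha}]<\infty$. Since $\mcC^\alpha_\mfm$ embeds compactly into $\mcC^{\alpha_0}_\mfm$, Markov's inequality then produces a compact set $K\subset \mcC^{\alpha_0}_\mfm$ (a closed ball in $\mcC^\alpha_\mfm$) such that $(P^*_{1/2}\delta_\zeta)(K) \geq 7/8$ uniformly in $\zeta$.

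The second half is to construct $\epsilon>0$ and a probability measure $\mu^* \in \mcP(\mcC^{\alpha_0}_\mfm)$ such that $P^*_{1/2}\delta_y \geq \epsilon\, \mu^*$ for all $y \in K$. By Theorem~\ref{th:SemiGroupLocalLipschitz}, the map $y \mapsto P^*_{1/2}\delta_y$ is continuous from $\mcC^{\alpha_0}_\mfm$ into $\mcP(\mcC^{\alpha_0}_\mfm)$ equipped with the total variation topology, and by Theorem~\ref{th:SolFullSupport} each such measure has full support. Using TV-continuity on the compact set $K$, I would cover $K$ by finitely many balls $\{B_{r_i}(y_i)\}_{i=1}^N$ on which the local Lipschitz constant is small enough to reduce the minorization to the finite family $\{P^*_{1/2}\delta_{y_i}\}_{i=1}^N$, and then take $\mu^* := \tfrac{1}{N}\sum_i P^*_{1/2}\delta_{y_i}$. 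Once this minorization is in hand, the tower identity
\[
P^*_1\delta_\zeta \,=\, \int P^*_{1/2}\delta_y \, \dd(P^*_{1/2}\delta_\zeta)(y) \,\geq\, \int_K \epsilon\,\mu^* \, \dd(P^*_{1/2}\delta_\zeta)(y) \,\geq\, \tfrac{7\epsilon}{8}\,\mu^*
\]
gives, for arbitrary $\zeta,\tilde{\zeta}\in\mcC^{\alpha_0}_\mfm$, the bound $\|P^*_1\delta_\zeta - P^*_1\delta_{\tilde{\zeta}}\|_{\TV} \leq 1 - \tfrac{7\epsilon}{8} =: \lambda \in (0,1)$.

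The main obstacle will be the minorization on $K$: full support alone yields only pointwise positivity of $(P^*_{1/2}\delta_y)(U)$ on any open set $U$, and the TV-continuity provided by Theorem~\ref{th:SemiGroupLocalLipschitz} must be exploited carefully, together with the compactness of $K$, to convert this pointwise information into a uniform-in-$y$ minorization by a common measure. It is for this reason that the a priori bound in the Lyapunov step must be strong enough to place the laws $P^*_{1/2}\delta_\zeta$ on a compact, rather than merely bounded, subset of $\mcC^{\alpha_0}_\mfm$.
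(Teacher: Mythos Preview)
Your overall strategy---combining the a priori estimate of Theorem~\ref{thm:SolAPriori}, the strong Feller bound of Theorem~\ref{th:SemiGroupLocalLipschitz}, and full support from Theorem~\ref{th:SolFullSupport} into a Doeblin-type contraction---is correct and is exactly the route the paper takes (via \cite[Thm.~6.5]{tsatsoulis_weber_18}). The difference is structural: the paper splits $[0,1]$ into \emph{three} sub-intervals, while you split into two and try to build a uniform minorization on the compact set $K$ directly.

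The gap is in your minorization step. Covering $K$ by balls $B_{r_i}(y_i)$ on which $\|P^*_{1/2}\delta_y - P^*_{1/2}\delta_{y_i}\|_{\TV}$ is small does \emph{not} reduce the problem to the finite family $\{P^*_{1/2}\delta_{y_i}\}$: total-variation closeness gives no domination. Concretely, $\|\mu-\nu\|_{\TV}<\delta$ is perfectly compatible with $\mu\not\geq c\,\nu$ for every $c>0$ (take $\mu=\delta_0$ and $\nu=(1-\delta)\delta_0+\delta\,\delta_1$). Hence setting $\mu^*:=\tfrac{1}{N}\sum_i P^*_{1/2}\delta_{y_i}$ does not yield $P^*_{1/2}\delta_y\geq\epsilon\,\mu^*$ for $y\in K$, and even for the finitely many centres $y_j$ there is no reason that $P^*_{1/2}\delta_{y_j}\geq c\,\mu^*$.

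The repair is to split your second half-interval once more, which recovers the paper's three-step scheme. Fix $y_0\in\mcC^{\alpha_0}_\mfm$ and, using Theorem~\ref{th:SemiGroupLocalLipschitz}, choose $r>0$ so small that $\|P^*_{1/4}\delta_z-P^*_{1/4}\delta_{z'}\|_{\TV}\leq\tfrac12$ for all $z,z'\in B_r(y_0)$. By Theorem~\ref{th:SolFullSupport} the map $y\mapsto P_{1/4}(y,B_r(y_0))$ is strictly positive, and by strong Feller it is continuous; compactness of $K$ then gives $\inf_{y\in K}P_{1/4}(y,B_r(y_0))=:p>0$. Now couple: run the two processes independently on $[0,1/2]$ to land in $K\times K$ with probability $\geq (7/8)^2$; run independently on $[1/2,3/4]$ to land in $B_r(y_0)\times B_r(y_0)$ with conditional probability $\geq p^2$; finally on $[3/4,1]$ use the coupling characterization of total variation to make the two trajectories coincide with conditional probability $\geq 1/2$. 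This yields $\|P^*_1\delta_\zeta-P^*_1\delta_{\tilde\zeta}\|_{\TV}\leq 1-\tfrac12 p^2(7/8)^2=:\lambda<1$, uniformly in $\zeta,\tilde\zeta$. This is precisely the ``three sub-intervals / couple the semi-groups'' argument the paper sketches.

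A minor side remark: your upgrade from $L^p$ at $t=1/4$ to $\mcC^\alpha$ at $t=1/2$ is more than you need. Since $L^p\hookrightarrow\mcC^{-1/p}$ for $p$ large and $\mcC^{-1/p}\hookrightarrow\mcC^{\alpha_0}$ is compact once $-1/p>\alpha_0$, Theorem~\ref{thm:SolAPriori} alone already places the law in a compact subset of $\mcC^{\alpha_0}_\mfm$ with uniformly high probability.
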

\begin{proof}[Sketch of Proof]
It suffices to essentially repeat the proof of \cite[Thm.~6.5]{tsatsoulis_weber_18}. Given an arbitrary fixed time interval, the idea is to divide the interval into three sub-intervals and repeatedly evolve the solutions, $u(\zeta),\,u(\tilde{\zeta})$, for the duration of each sub-interval into an open bounded set, couple the semi-groups started from these points and successively apply the results of Theorems \ref{thm:SolAPriori}, \ref{th:SemiGroupLocalLipschitz} and \ref{th:SolFullSupport}.
\end{proof}
\begin{corollary}\label{cor:1dUniqueInvariant} For any $\mfm\in \mbR$, there exists a unique measure $\nu \in\mcP( \mcC^{\alpha_0}_{\mfm})$ which is invariant for the semi-group $(P_t)_{t\geq 0}$. Furthermore,
for any $\delta \in (0,1/2)$, $\nu(\mcC^{1/2-\delta}_\mfm)=1$ and $\nu$ has full support in $\mcC^{1/2-\delta}_\mfm$.
Finally, there exists $\lambda\in (0,1)$ such that for every $\mu \in \mcP(\mcC^{\alpha_0}_\mfm)$ and 
$t\geq 1$
\begin{equation}\label{eq:SemiGroupExponential}
\|P^*_t\mu -\nu\|_{\TV}\, \leq \lambda^{\floor{t}}\|\mu-\nu\|_{\TV(\mcC^{\alpha_0}_\mfm)}.
\end{equation}
\end{corollary}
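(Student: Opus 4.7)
The plan is to deduce all four conclusions from the contraction bound of Theorem~\ref{th:SemiGroupContract1} together with the a priori and support results of Sections~\ref{sec:1dAPriori}--\ref{sec:InvariantMeasures}.

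\textbf{Uniqueness and the bound~\eqref{eq:SemiGroupExponential}.} First I would upgrade Theorem~\ref{th:SemiGroupContract1} from Dirac masses to arbitrary probability measures. Given $\mu_1,\mu_2\in\mcP(\mcC^{\alpha_0}_\mfm)$ and any coupling $\pi\in\Pi_{\mu_1,\mu_2}$, the identity $P_t^*\mu_i=\int P_t^*\delta_\zeta\,\dd\mu_i(\zeta)$ together with $\|P_t^*\delta_\zeta-P_t^*\delta_{\tilde\zeta}\|_{\TV}\leq\lambda\ind_{\zeta\neq\tilde\zeta}$ (valid for $t\geq1$ by Theorem~\ref{th:SemiGroupContract1}) yields
\[
\|P_t^*\mu_1-P_t^*\mu_2\|_{\TV}\leq \lambda\,\pi(\{\zeta\neq\tilde\zeta\}).
\]
Infimising over $\pi$ and invoking the coupling characterisation of $\TV$ from Section~\ref{subsec:Notation} gives the contraction $\|P_t^*\mu_1-P_t^*\mu_2\|_{\TV}\leq\lambda\|\mu_1-\mu_2\|_{\TV}$. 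Uniqueness of $\nu$ is then immediate: if $\nu_1,\nu_2$ are both invariant, $\|\nu_1-\nu_2\|_{\TV}\leq\lambda\|\nu_1-\nu_2\|_{\TV}$ forces $\nu_1=\nu_2$. For~\eqref{eq:SemiGroupExponential} I would iterate the contraction at $t=1$ and write $t=\lfloor t\rfloor+r$ with $r\in[0,1)$; since $P_r^*$ is a non-strict $\TV$-contraction as a Markov operator and $P_r^*\nu=\nu$,
\[
\|P_t^*\mu-\nu\|_{\TV}=\|P_r^*(P_{\lfloor t\rfloor}^*\mu-\nu)\|_{\TV}\leq\lambda^{\lfloor t\rfloor}\|\mu-\nu\|_{\TV}.
\]

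\textbf{Regularity of $\nu$.} By invariance, $\nu=P_1^*\nu=\int\mcL(u_1(\zeta))\,\dd\nu(\zeta)$. Decompose $u_1(\zeta)=w_1(\zeta)+v_{0,1}$: Theorem~\ref{th:MarkovSHESpaceTimeRegular} applied with $\alpha$ close to $1/2$ and $\kappa$ small places $v_{0,1}\in\mcC^{1/2-\delta}$ almost surely, while Lemma~\ref{lem:spatial_reg} applied with $t_0\in(0,1)$, $\beta\in(\alpha,\alpha+1)$ close to $\alpha+1$, and $\kappa>0$ small places $w_1(\zeta)\in\mcC^{\beta-2\kappa}\subset\mcC^{1/2-\delta}$ for every $\zeta\in\mcC^{\alpha_0}_\mfm$. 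Hence $P_1^*\delta_\zeta(\mcC^{1/2-\delta}_\mfm)=1$ for all $\zeta$, and integration gives $\nu(\mcC^{1/2-\delta}_\mfm)=1$.

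\textbf{Full support.} Since $\nu(\mcC^{1/2-\delta}_\mfm)=1$, a $\nu$-typical $\zeta$ lies in $\mcC^{1/2-\delta}_\mfm$. Applying Theorem~\ref{th:SolFullSupport} with parameter $\alpha=1/2-\delta\in(0,1/2)$ gives $\supp(\mcL(u_T(\zeta)))=\mcC^{1/2-\delta}_\mfm$ in the $\mcC^{1/2-\delta}$ topology for every such $\zeta$. Thus for any $y\in\mcC^{1/2-\delta}_\mfm$ and $\eps>0$, the $\mcC^{1/2-\delta}$-open ball $B_\eps(y)$ satisfies $\mbP(u_T(\zeta)\in B_\eps(y))>0$ for $\nu$-a.e.\ $\zeta$, and by invariance
\[
\nu(B_\eps(y))=\int\mbP(u_T(\zeta)\in B_\eps(y))\,\dd\nu(\zeta)>0,
\]
whence $\supp\nu=\mcC^{1/2-\delta}_\mfm$.

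The main technical point is the parameter-matching needed when invoking Theorem~\ref{th:SolFullSupport} and Lemma~\ref{lem:spatial_reg} to reach the precise regularity $1/2-\delta$, and the lift from Dirac-mass contraction to general-measure contraction at the start; the remaining steps are direct applications of earlier results.
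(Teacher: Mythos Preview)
Your proposal is correct and follows the same route the paper intends: the paper's own proof is a single sentence invoking existence from Theorem~\ref{thm:existence_invar_measures} and then deferring to \cite[Cor.~6.6]{tsatsoulis_weber_18}, whose content is precisely the coupling-lift of the uniform Dirac-mass contraction (Theorem~\ref{th:SemiGroupContract1}) to arbitrary measures, followed by iteration --- exactly what you spell out. Your treatments of the regularity $\nu(\mcC^{1/2-\delta}_\mfm)=1$ via Lemma~\ref{lem:spatial_reg} and Theorem~\ref{th:MarkovSHESpaceTimeRegular}, and of full support via Theorem~\ref{th:SolFullSupport} combined with invariance, are the natural arguments and are sound.

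One small omission: you argue uniqueness but never invoke existence. You should explicitly cite Theorem~\ref{thm:existence_invar_measures} (Krylov--Bogoliubov) at the start so that the phrase ``if $\nu_1,\nu_2$ are both invariant'' is not vacuous; this is the one ingredient the paper's proof does name.
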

\begin{proof}
Using the existence of invariant measures, shown in Theorem~\ref{thm:existence_invar_measures}, the proof of uniqueness follows exactly as that of \cite[Cor. 6.6]{tsatsoulis_weber_18}.
\end{proof}
We now complete the proof of Theorem \ref{th:1dexpErgodic}.
\begin{proof}[Proof of Theorem \ref{th:1dexpErgodic}] Let $(\alpha_0,\,\alpha,\,\eta)$ satisfy the criteria of \eqref{eq:exponentCriteria}, the less restrictive set of parameters,
and $(\alpha',\,\alpha_0',\,\eta')$ satisfy the more restrictive set, \eqref{eq:exponentCriteria2}, and be such that $\alpha_0' >\alpha_0$.
Then applying Corollary \ref{cor:1dUniqueInvariant} with $(\alpha_0',\,\alpha',\,\eta')$ we find that there exists a unique invariant measure
$\nu\in\mcP(\mcC^{\alpha}_\mfm)$ of $(P_t)_{t\geq 0}$
and that $\nu$ has full support in $\mcC^{\alpha}_\mfm$.
From Theorem \ref{th:1dGWP}, given $\zeta \in \mcC^{\alpha_0}(\mbT)$, for any $t>0$, $u_t(\zeta)\in \mcC^{\alpha}_\mfm (\mbT)\subset \mcC^{\alpha_0'}_{\mfm}(\mbT)$. Therefore, setting $\mu= \mcL(u_t(\zeta))$, in Corollary \ref{cor:1dUniqueInvariant}, we have, for some $\lambda \in (0,1)$,
\begin{align*}
\|P_{2t}^*\delta_\zeta - \nu\|_{\TV(\mcC^{\alpha})}& = \|P^*_{t} \mcL\left(u_{t}(\zeta)\right) -\nu\|_{\TV(\mcC^{\alpha_0})}\\
&\leq \lambda^{\floor{t}} \| \mcL\left(u_{t}(\zeta)\right) -\nu\|_{\TV(\mcC^{\alpha'_0})} \\
&\leq \lambda^{\floor{t}},
\end{align*}
where in the penultimate line we used that $\mcC_b(\mcC^{\alpha_0}(\mbT)) \subset\mcC_b(\mcC^{\alpha_0'}(\mbT))$ and in the last line that the total variation distance is bounded by $1$. Therefore~\eqref{eq:LawConverge} holds for $c>0$ sufficiently small.

Consider now $p\geq 1$.
By Fernique's theorem  (\cite[Thm.~2.6]{daprato_zabczyk_14}),
there exists $\Lambda>0$ such that
$\mbE[\exp(\Lambda \|v\|_{C_1\mcC^\alpha}^2)]<\infty$.
Since $\mcC^\alpha\hookrightarrow L^p$
and $\|u_1(\zeta)\|_{L^p} \lesssim 1+\|v\|_{C_1\mcC^\alpha}^{1/\alpha}$
by Theorem~\ref{th:1dRemainderAPriori},
there exists $\Lambda>0$ such that
\begin{equation}
\sup_{\zeta\in\mcC^{\alpha_0}}
\mbE\left[ \exp\left(\Lambda \|u_1(\zeta)\|^{2\alpha}_{L^p}\right) \right] <\infty\;.
\end{equation}
Hence, taking $\alpha$ arbitrarily close to $\frac12$ (after possibly restarting the equation and choosing a new $\alpha_0$ close to $0$), we obtain~\eqref{eq:LawTail}.
\end{proof}
%%%%%%%%%%%%%%%
\appendix
%%%%%%%%%%%%%%
\section{H{\"o}lder-Besov Spaces on \texorpdfstring{$\mbT$}{T}}\label{app:HolderBesovSpaces}
For $\alpha \in \mbR$ and $p,\,q \in [1,\infty]$ we define the non-homogeneous Besov norm for $f\in C^\infty(\mathbb{T})$ by the expression
\begin{equation}\label{eq:NonHomBesovNorm}
\|f\|_{\mcB^\alpha_{p,\,q}(\mbT)} := \left\|\left(2^{\alpha k}\|\Delta_k f\|_{L^p(\mbT)}\right)_{k}\right\|_{l^q(\mathbb{Z})},
\end{equation}
where $\Delta_k : L^1(\mbT)\rightarrow L^1(\mbT)$ are the Littlewood--Paley projection operators on the periodic, integrable functions.
See for example \cite[Ch. 2]{bahouri_chemin_danchin_11} for a construction on $\mbR^d$ and \cite[App. A]{mourrat_weber_17_DFI} for a discussion of the same construction in the periodic setting.
We recall here that there exist kernels $h_k \in C^\infty(\mbT)$ such that, for $f \in L^1(\mbT)$ and $k\geq -1$, $\Delta_k f = h_k \ast f$ - see \cite[Sec. 2.2]{bahouri_chemin_danchin_11}. Furthermore $\mcF h_k$, the Fourier transform of $h_k$, takes values in $[0,1]$ and is supported in the ball $B(0,2)$ for $k=-1$ and in the annulus $B(0,9\cdot2^{k})\setminus B(0,2^{k})$ for $k\geq 0$.

We denote by $\mcB^\alpha_{p,\,q}(\mbT)$ the completion of $C^\infty(\mathbb{T})$ with respect to \eqref{eq:NonHomBesovNorm}, which ensures these spaces are separable.
\begin{theorem}[{Duality Pairing - \cite[Prop.~A.1]{mourrat_weber_17_DFI}}]
\label{th:BesovDuality}
For $\alpha \in\mbR$, $p,\,q\in [1,\infty]$ and $p',q' \in [1,\infty]$ such that $\frac{1}{p}+ \frac{1}{p'} =\frac{1}{q}+\frac{1}{q'}=1$, there exists a constant $C:=C(\alpha,p,q)>0$ such that
\begin{equation}\label{eq:BesovDuality}
|\langle f,g\rangle| \,\leq C \|f\|_{\mcB^\alpha_{p,q}}\|g\|_{\mcB^{-\alpha}_{p',q'}}.
\end{equation}
\end{theorem}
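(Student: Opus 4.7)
The plan is to reduce the bilinear estimate to a discrete Hölder inequality via the almost-orthogonality of Littlewood--Paley blocks. First I would observe that, by the support properties of $\mcF h_k$ recalled in the appendix (supported in the annulus $B(0,9\cdot 2^k)\setminus B(0,2^k)$ for $k\geq 0$, and in $B(0,2)$ for $k=-1$), there is an absolute constant $N_0\in\mbN$ such that $\mcF h_j$ and $\mcF h_k$ have disjoint supports whenever $|j-k|\geq N_0$. By Parseval this yields $\langle \Delta_j f,\Delta_k g\rangle=0$ for such pairs, so that for $f,g\in C^\infty(\mbT)$
\begin{equation*}
\langle f,g\rangle \;=\; \sum_{j,k\geq -1}\langle \Delta_j f,\Delta_k g\rangle \;=\; \sum_{|j-k|<N_0}\langle \Delta_j f,\Delta_k g\rangle,
\end{equation*}
collapsing the doubly-indexed sum onto a band of width $2N_0-1$ around the diagonal.

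The next step is to estimate each surviving term by Hölder's inequality in $L^p(\mbT)$, $|\langle \Delta_j f,\Delta_k g\rangle|\leq \|\Delta_j f\|_{L^p}\|\Delta_k g\|_{L^{p'}}$, and then insert the dyadic weights $1=2^{\alpha j}\cdot 2^{-\alpha j}$, absorbing the factor $2^{\alpha(j-k)}$ into an absolute constant since $|j-k|<N_0$. This produces
\begin{equation*}
|\langle f,g\rangle| \;\lesssim_{\alpha,N_0}\; \sum_{|j-k|<N_0}\bigl(2^{\alpha j}\|\Delta_j f\|_{L^p}\bigr)\bigl(2^{-\alpha k}\|\Delta_k g\|_{L^{p'}}\bigr).
\end{equation*}
Splitting this band into $2N_0-1$ shifted diagonals $k=j+m$ with $|m|<N_0$, and then applying the discrete Hölder inequality with dual exponents $(q,q')$ on each single-indexed diagonal, yields exactly $\|f\|_{\mcB^\alpha_{p,q}}\|g\|_{\mcB^{-\alpha}_{p',q'}}$ up to a constant depending only on $\alpha,p,q$ and $N_0$.

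Finally, to move from smooth test functions to arbitrary elements of the two Besov spaces I would use that $\mcB^\alpha_{p,q}(\mbT)$ and $\mcB^{-\alpha}_{p',q'}(\mbT)$ are both defined here as the completion of $C^\infty(\mbT)$ in their respective norms, together with the bilinearity and continuity of the estimate just established on the smooth dense subset. The only genuinely delicate point is the almost-orthogonality bookkeeping: verifying carefully that the annular Fourier supports of $\mcF h_j$ and $\mcF h_k$ really are disjoint once $|j-k|$ exceeds a fixed $N_0$, and that the geometric factors $2^{\alpha(j-k)}$ remain uniformly bounded on the resulting band. Once this is in hand, the rest of the argument is a direct transcription of the classical Euclidean proof, as can be found for instance in \cite[Prop.~2.76]{bahouri_chemin_danchin_11}; accordingly the authors simply refer to \cite[Prop.~A.1]{mourrat_weber_17_DFI} rather than reproving the statement in the periodic setting.
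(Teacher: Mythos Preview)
Your proof is correct and follows the standard Littlewood--Paley almost-orthogonality argument. The paper does not supply its own proof of this statement; it simply cites \cite[Prop.~A.1]{mourrat_weber_17_DFI}, exactly as you anticipated in your final paragraph.
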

\begin{theorem}[{Besov Embeddings - \cite[Prop.~A.2, Rem.~A.3]{mourrat_weber_17_DFI}}]\label{th:BesovEmbedding}
Let $\alpha \leq \beta \in \mbR$, $q_1,\,q_2 \in [1,\infty]$, $p\geq r \in [1,\infty]$ be such that $\beta = \alpha+\left( \frac{1}{r}-\frac{1}{p}  \right)$ and $q_1\geq q_2$. Then there exists a constant $C:=C(\alpha,p,r,q_1,q_2)>0$ such that
\begin{equation}\label{eq:BesovEmbedding}
\|f\|_{\mcB^\alpha_{p,q_1}}\leq C \|f\|_{\mcB^\beta_{r,q_1}}, \quad\text{and}\quad \|f\|_{\mcB^{\alpha}_{p,q_1}}\leq C \|f\|_{\mcB^{\alpha}_{p,q_2}}.
\end{equation}
Furthermore, for any $\beta>\alpha$ and $p,q\in [1,\infty]$ the embedding $\mcB^\beta_{p,q}\hookrightarrow \mcB^{\alpha}_{p,q}$ is compact.
Finally, there exists $C:=C(p)>0$ such that
\begin{equation}\label{eq:Besov_Lp_embedding}
C^{-1}\|f\|_{\mcB^0_{p,\infty}}\leq
\|f\|_{L^p}\leq \|f\|_{\mcB^0_{p,1}}\;.
\end{equation}
\end{theorem}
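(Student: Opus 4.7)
The plan is to establish all three claims — the two norm inequalities, the compact embedding, and the $L^p$ characterisation — from one central tool, Bernstein's inequality for Littlewood--Paley blocks. Concretely, for $k\geq -1$, $p\geq r\in[1,\infty]$ and $f\in L^r(\mbT)$, one has
\begin{equation*}
\|\Delta_k f\|_{L^p} \leq C\, 2^{k(\frac{1}{r}-\frac{1}{p})}\|\Delta_k f\|_{L^r}\,,
\end{equation*}
which follows by writing $\Delta_k f = \tilde h_k \ast \Delta_k f$ for a smoothed version $\tilde h_k$ of $h_k$ whose Fourier transform equals $1$ on the support of $\mcF h_k$, and applying Young's convolution inequality after the scaling estimate $\|\tilde h_k\|_{L^s}\lesssim 2^{k(1-1/s)}$ with $\frac{1}{s}=1+\frac{1}{p}-\frac{1}{r}$. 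The same argument at scale $k=-1$ gives a uniform bound that is harmless because $2^{-k(\cdot)}$ remains $\leq 1$ on the single block.

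For the first inequality in \eqref{eq:BesovEmbedding}, multiply Bernstein by $2^{\alpha k}$ and use $\beta = \alpha + \frac1r-\frac1p$ to get $2^{\alpha k}\|\Delta_k f\|_{L^p}\lesssim 2^{\beta k}\|\Delta_k f\|_{L^r}$, then take the $\ell^{q_1}(\mbZ)$ norm. For the second, the inclusion $\ell^{q_2}(\mbZ)\hookrightarrow \ell^{q_1}(\mbZ)$ for $q_2\leq q_1$ gives the result immediately. For the $L^p$ characterisation, the upper bound $\|f\|_{L^p}\leq\|f\|_{\mcB^0_{p,1}}$ is the triangle inequality applied to the (norm-convergent, for $f\in \mcB^0_{p,1}$) decomposition $f=\sum_k\Delta_k f$; the lower bound uses $\|\Delta_k f\|_{L^p}=\|h_k\ast f\|_{L^p}\leq \|h_k\|_{L^1}\|f\|_{L^p}$ together with the classical uniform bound $\sup_{k\geq -1}\|h_k\|_{L^1}<\infty$ for Littlewood--Paley kernels (which follows from $h_k(x)=2^k h_0(2^k x)$ for $k\geq 0$ after rescaling, since $h_0$ is Schwartz).

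The main obstacle is the compact embedding $\mcB^\beta_{p,q}\hookrightarrow \mcB^\alpha_{p,q}$ for $\beta>\alpha$. Here I would proceed by a frequency truncation argument: given a bounded sequence $(f_n)\subset \mcB^\beta_{p,q}$ and $N\geq 0$, write $f_n = S_N f_n + R_N f_n$ with $S_N := \sum_{k\leq N}\Delta_k$ and $R_N:= I-S_N$. For the tail,
\begin{equation*}
\|R_N f_n\|^q_{\mcB^\alpha_{p,q}} = \sum_{k>N} 2^{\alpha k q}\|\Delta_k f_n\|^q_{L^p}\,\leq\, 2^{-(\beta-\alpha)Nq}\|f_n\|^q_{\mcB^\beta_{p,q}}\,,
\end{equation*}
which tends to $0$ uniformly in $n$ as $N\to\infty$ (with the obvious modification for $q=\infty$). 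For fixed $N$, the sequence $(S_N f_n)$ has Fourier support in a common finite set $\{m\in\mbZ:|m|\leq 9\cdot 2^N\}$ and is bounded in $L^p$ (applying the already-proved embedding $\mcB^\beta_{p,q}\hookrightarrow \mcB^0_{p,\infty}\hookrightarrow L^p$ or directly Bernstein applied block by block); since all norms are equivalent on this finite-dimensional space of trigonometric polynomials, one extracts an $L^p$-convergent subsequence, which then converges in $\mcB^\alpha_{p,q}$. A standard diagonal argument over $N\to\infty$, combined with the uniform tail bound, produces a Cauchy subsequence in $\mcB^\alpha_{p,q}$. The delicate point to watch is the case $q=\infty$, where the tail estimate becomes $\sup_{k>N} 2^{\alpha k}\|\Delta_k f_n\|_{L^p}\leq 2^{-(\beta-\alpha)N}\|f_n\|_{\mcB^\beta_{p,\infty}}$, which is handled identically.
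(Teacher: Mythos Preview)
The paper does not give a proof of this theorem; it is stated in the appendix with a citation to \cite[Prop.~A.2, Rem.~A.3]{mourrat_weber_17_DFI} and used as a black box. Your proof is the standard one and is essentially correct: Bernstein's inequality gives the first embedding, the $\ell^{q_2}\hookrightarrow\ell^{q_1}$ inclusion gives the second, the triangle inequality and Young's inequality give~\eqref{eq:Besov_Lp_embedding}, and the frequency-truncation plus diagonal argument handles compactness.

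One slip to flag: in the compactness step you write ``applying the already-proved embedding $\mcB^\beta_{p,q}\hookrightarrow \mcB^0_{p,\infty}\hookrightarrow L^p$''. The second arrow is false in general --- indeed~\eqref{eq:Besov_Lp_embedding} goes the other way, $L^p\hookrightarrow \mcB^0_{p,\infty}$. Your alternative (``directly Bernstein applied block by block'') is the correct route: for fixed $N$, $\|S_N f_n\|_{L^p}\leq \sum_{k\leq N}\|\Delta_k f_n\|_{L^p}$ is a finite sum with each term controlled by $\|f_n\|_{\mcB^\beta_{p,q}}$, regardless of the sign of $\beta$. A second minor point: on $\mbT$ the exact dilation identity $h_k(x)=2^k h_0(2^k x)$ does not hold literally (the kernels are periodisations), but the conclusion $\sup_{k}\|h_k\|_{L^1(\mbT)}<\infty$ is still correct and is what you actually need.
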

\begin{theorem}[{Effect of Derivatives - \cite[Prop.~A.5]{mourrat_weber_17_DFI}}]\label{th:BesovDerivative}
Let $\alpha \in \mbR$, $p,\,q\in [1,\infty]$ and $k \in \mbN_{>0}$. Then there exists  a constant $C:=C(k)>0$ such that for any $\sigma \in \mbN$
\begin{equation}\label{eq:BesovDerivative}
\|D^\sigma f\|_{\mcB^{\alpha-\sigma}_
{p,q}}\leq C \|f\|_{\mcB^{\alpha}_{p,q}}.
\end{equation}
\end{theorem}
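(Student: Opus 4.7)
The plan is to reduce \eqref{eq:BesovDerivative} to a Bernstein-type inequality applied blockwise in the Littlewood--Paley decomposition, exploiting the fact that $D^\sigma$ is a Fourier multiplier commuting with each projector $\Delta_k$.

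First I would record the spectral localisation. Since $\Delta_k f = h_k \ast f$ with $\mcF h_k$ supported in $B(0,2)$ for $k=-1$ and in the annulus $B(0,9\cdot 2^k)\setminus B(0,2^k)$ for $k\geq 0$, and since $D^\sigma$ acts on the Fourier side by the multiplier $m \mapsto (i2\pi m)^\sigma$, one has $\Delta_k D^\sigma f = D^\sigma \Delta_k f$, and the Fourier transform of this function is supported in the same ball/annulus as $\mcF h_k$.

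Next I would establish the Bernstein inequality on the torus: there exists $C=C(\sigma)>0$ such that for every $k\geq -1$ and every $g\in L^p(\mbT)$ with $\supp\mcF g \subseteq B(0,9\cdot 2^k)$,
\begin{equation*}
\|D^\sigma g\|_{L^p(\mbT)} \leq C \, 2^{\sigma k} \|g\|_{L^p(\mbT)}.
\end{equation*}
One proves this by picking a smooth cutoff $\chi\in C_c^\infty(\mbR)$ with $\chi\equiv 1$ on $B(0,9)$ and $\chi\equiv 0$ outside $B(0,10)$, and defining $\varphi_k\in C^\infty(\mbT)$ by $\mcF\varphi_k(m) = (i2\pi m)^\sigma \chi(2^{-k}m)$. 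Then $D^\sigma g = \varphi_k \ast g$, and a rescaling argument together with Young's convolution inequality gives $\|\varphi_k\|_{L^1(\mbT)} \leq C(\sigma) 2^{\sigma k}$, yielding the claim.

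With these two ingredients the conclusion is immediate: by the definition \eqref{eq:NonHomBesovNorm},
\begin{align*}
\|D^\sigma f\|_{\mcB^{\alpha-\sigma}_{p,q}}
&= \Bigl\| \bigl(2^{(\alpha-\sigma)k}\|\Delta_k D^\sigma f\|_{L^p}\bigr)_k \Bigr\|_{\ell^q(\mbZ)} \\
&= \Bigl\| \bigl(2^{(\alpha-\sigma)k}\|D^\sigma \Delta_k f\|_{L^p}\bigr)_k \Bigr\|_{\ell^q(\mbZ)} \\
&\leq C(\sigma) \Bigl\| \bigl(2^{(\alpha-\sigma)k} 2^{\sigma k}\|\Delta_k f\|_{L^p}\bigr)_k \Bigr\|_{\ell^q(\mbZ)}
= C(\sigma)\|f\|_{\mcB^\alpha_{p,q}}.
\end{align*}

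The only substantive step is the Bernstein inequality; everything else is bookkeeping in the Littlewood--Paley calculus. The lone subtlety is the low-frequency block $k=-1$, where the spectrum is a ball rather than an annulus, but the same convolution argument applies verbatim with a universal constant, so no separate treatment is really needed. Thus there is no genuine obstacle — this is a standard mapping property of the derivative on periodic Besov spaces, and the exponent loss $\alpha\mapsto \alpha-\sigma$ comes precisely from the $2^{\sigma k}$ factor in Bernstein.
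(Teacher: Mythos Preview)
Your argument is correct and is the standard route via Bernstein's inequality applied blockwise. Note that the paper itself does not supply a proof of this statement: it is quoted from \cite[Prop.~A.5]{mourrat_weber_17_DFI} as a known fact, so there is no in-paper proof to compare against. Your constant $C=C(\sigma)$ is the right dependency; the appearance of $k$ in the theorem's hypothesis and in $C(k)$ is evidently a typographical slip in the paper (the Littlewood--Paley index $k$ clashing with what should be the differentiation order).
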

\begin{theorem}[{Bounds in Terms of the Derivative - \cite[Prop.~A.6]{mourrat_weber_17_DFI}}]\label{th:BesovPoincare}
Let $\alpha \in (0,1]$ and $p,\,q \in [1,\infty]$, where if $\alpha=1$ we impose $q=\infty$. Then there exists a constant $C:=C(\alpha,p,q)>0$ such that
\begin{equation}\label{eq:BesovPoincare}
\|f\|_{\mcB^{\alpha}_{p,q}} \leq C\left( \|f\|_{L^p}^{1-\alpha}\|\partial_x f\|^\alpha_{L^p} + \|f\|_{L^p}\right)
\end{equation}
\end{theorem}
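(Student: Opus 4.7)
The strategy is the classical Littlewood--Paley approach: control each frequency block $\Delta_k f$ by two competing bounds (one trivial, one via $\partial_x f$), take the geometric/logarithmic split of the resulting sum, and conclude by a short geometric series computation.

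\textbf{Step 1: Two bounds on $\|\Delta_k f\|_{L^p}$.} Since $\Delta_k f = h_k \ast f$ with $h_k$ integrable, Young's convolution inequality immediately gives $\|\Delta_k f\|_{L^p} \leq C \|f\|_{L^p}$ uniformly in $k\geq -1$. For $k\geq 0$, the kernel $h_k$ has Fourier support in the annulus $\{|m| \in [2^k, 9\cdot 2^k]\}$, which on this annulus makes multiplication by $i2\pi m$ invertible, with inverse bounded by $C 2^{-k}$. This yields the Bernstein-type inequality $\|\Delta_k f\|_{L^p} \leq C 2^{-k}\|\partial_x f\|_{L^p}$ for $k\geq 0$. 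Combining,
\begin{equation*}
2^{\alpha k}\|\Delta_k f\|_{L^p} \leq C \min\!\bigl( 2^{\alpha k}\|f\|_{L^p},\ 2^{(\alpha-1)k}\|\partial_x f\|_{L^p}\bigr),\quad k\geq 0.
\end{equation*}

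\textbf{Step 2: Splitting the sum.} Set $A:=\|f\|_{L^p}$, $B:=\|\partial_x f\|_{L^p}$, assume $A>0$ (else $f\equiv 0$), and let $k_0 := \max\!\bigl(0,\lceil\log_2(B/A)\rceil\bigr)$. For $0\leq k \leq k_0$ use the first bound $2^{\alpha k}A$; for $k>k_0$ use the second bound $2^{(\alpha-1)k}B$. For $q<\infty$ and $\alpha\in(0,1)$ each piece is a geometric series, so
\begin{equation*}
\sum_{k=0}^{k_0}(2^{\alpha k}A)^q \lesssim 2^{\alpha q k_0}A^q,\qquad \sum_{k>k_0}(2^{(\alpha-1)k}B)^q \lesssim 2^{(\alpha-1)q k_0}B^q,
\end{equation*}
and substituting $2^{k_0}\asymp \max(1,B/A)$ both terms are controlled by $A^{(1-\alpha)q}B^{\alpha q}+A^q$, yielding $(A^{1-\alpha}B^\alpha + A)^q$ up to a constant. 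The $k=-1$ block contributes an extra $CA$ via the trivial bound. For $q=\infty$ the same split replaces the geometric sums by suprema and gives the same conclusion. For the endpoint $\alpha=1$ with $q=\infty$ the Bernstein bound alone yields $\sup_{k\geq 0}2^k\|\Delta_k f\|_{L^p}\leq C B$, plus the $k=-1$ contribution of $CA$.

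\textbf{Step 3: Assembling.} Taking the $\ell^q$-norm in $k\geq -1$ and combining the $k=-1$ and $k\geq 0$ estimates gives
\begin{equation*}
\|f\|_{\mcB^\alpha_{p,q}} \leq C\bigl(\|f\|_{L^p}^{1-\alpha}\|\partial_x f\|_{L^p}^\alpha + \|f\|_{L^p}\bigr),
\end{equation*}
as claimed. The only subtle point is the bookkeeping of cases $\alpha\in(0,1)$ vs.\ $\alpha=1$, $q<\infty$ vs.\ $q=\infty$, and the degenerate case $B=0$; the latter is handled automatically by the additive $\|f\|_{L^p}$ term and the choice $k_0=0$. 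The main (mild) obstacle is verifying the Bernstein inequality cleanly in the periodic setting, which follows from the fact that $\mcF h_k$ is supported away from zero for $k\geq 0$ so that an auxiliary Fourier multiplier inverting $i2\pi m$ on the annulus $\{|m|\in[2^k,9\cdot 2^k]\}$ is a Calder\'on--Zygmund operator of norm $O(2^{-k})$ on $L^p$; this is standard (cf.\ Bernstein's lemma in \cite[Ch.~2]{bahouri_chemin_danchin_11}).
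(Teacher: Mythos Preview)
The paper does not supply its own proof of this theorem; it is quoted directly from \cite[Prop.~A.6]{mourrat_weber_17_DFI}. Your argument is correct and is essentially the standard Littlewood--Paley proof (Bernstein's inequality for annulus-supported blocks combined with a dyadic split at the crossover scale $2^{k_0}\asymp \|\partial_x f\|_{L^p}/\|f\|_{L^p}$), which is also the approach in the cited reference.
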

\begin{theorem}[{Product Bounds - \cite[Prop.~A.7]{mourrat_weber_17_DFI}}]\label{th:BesovProduct}
Let $p,q \in [1,\infty]$, $\alpha,\,\beta \in \mbR$ be such that $\alpha+\beta>0$ with $\beta >0$.
Then for $\alpha>0$ and $\nu \in [0,1]$, there exists a constant $C:=C(\alpha,\beta,p,q)>0$ such that
\begin{equation}\label{eq:BesovProduct}
\|fg\|_{\mcB^{\alpha}_{p,q}} \leq C \|f\|_{\mcB^{\alpha}_{\frac{p}{\nu},q}}\|g\|_{\mcB^{\beta}_{\frac{p}{1-\nu},q}}\leq C \|f\|_{\mcB^{\alpha+(1-\nu)\frac{d}{p}}_{p,q}}\|g\|_{\mcB^{\beta+\nu\frac{d}{p}}_{p,q}}.
\end{equation}
Furthermore, if $\alpha>0$ and $p_1,p_2,p_3,p_4\in [1,\infty]$ are such that $\frac{1}{p_1}+\frac{1}{p_2}=\frac{1}{p_3}+\frac{1}{p_4}=\frac{1}{p}$, then there exists a $C:= C(\alpha,p,q,p_1,p_2,p_3,p_4)>0$ such that
\begin{equation}\label{eq:BesovPositiveProductSplit}
\|fg\|_{\mcB^{\alpha}_{p,q}} \leq C\left(\|f\|_{L^{p_1}}\|g\|_{\mcB^{\alpha}_{p_2,q}} + \|f\|_{\mcB^{\alpha}_{p_3,q}}\|g\|_{L^{p_4}} \right).
\end{equation}
\end{theorem}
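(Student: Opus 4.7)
The approach is Bony's paraproduct decomposition. For $f,g\in C^\infty(\mbT)$ I write $fg=T_f g + T_g f + R(f,g)$, where, with $S_j:=\sum_{i\leq j}\Delta_i$,
\begin{equation*}
T_f g := \sum_{k\geq 0} S_{k-2}f\cdot \Delta_k g,\qquad R(f,g) := \sum_{|j-k|\leq 1} \Delta_j f\cdot \Delta_k g.
\end{equation*}
The proof reduces to two classical bounds, whose periodic versions follow from the spectral-support properties of the kernels $h_k$ recalled in Appendix~\ref{app:HolderBesovSpaces} in exactly the same way as on $\mbR^d$ (see \cite[Ch.~2]{bahouri_chemin_danchin_11}): for any $\gamma\in\mbR$ and $p_1,p_2\in[1,\infty]$ with $1/p=1/p_1+1/p_2$,
\begin{equation*}
\textnormal{(P)}\qquad \|T_f g\|_{\mcB^\gamma_{p,q}}\lesssim \|f\|_{L^{p_1}}\|g\|_{\mcB^\gamma_{p_2,q}},
\end{equation*}
\begin{equation*}
\textnormal{(R)}\qquad \|R(f,g)\|_{\mcB^{\alpha+\beta}_{p,q}}\lesssim \|f\|_{\mcB^\alpha_{p_1,q}}\|g\|_{\mcB^\beta_{p_2,q}} \quad \text{whenever }\alpha+\beta>0.
\end{equation*}
For (P) one uses that each summand $S_{k-2}f\cdot \Delta_k g$ is frequency-localised in an annulus of radius $\sim 2^k$, so $\Delta_\ell(T_f g)$ picks up contributions only from $k$ with $|\ell-k|\leq C$, and H{\"o}lder's inequality concludes. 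For (R), the summand $\Delta_j f\cdot\Delta_k g$ with $|j-k|\leq 1$ is localised in a frequency ball of radius $\sim 2^k$, so $\Delta_\ell R$ receives contributions from all $k\geq \ell-C$; one bounds $2^{(\alpha+\beta)\ell}\|\Delta_\ell R\|_{L^p}$ by the discrete convolution of $\{2^{(\alpha+\beta)k}\|\Delta_k f\cdot \Delta_k g\|_{L^p}\}_k$ with the kernel $\{2^{(\alpha+\beta)(\ell-k)}\indic_{k\geq \ell-C}\}_k$, which lies in $\ell^1$ precisely because $\alpha+\beta>0$, and Young's inequality together with H{\"o}lder gives (R).

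Given (P) and (R), I would derive \eqref{eq:BesovProduct} by setting $p_1=p/\nu$ and $p_2=p/(1-\nu)$. The hypotheses $\alpha,\beta>0$ give, via Theorem~\ref{th:BesovEmbedding}, the continuous embeddings $\mcB^\alpha_{p_1,q}\hookrightarrow L^{p_1}$ and $\mcB^\beta_{p_2,q}\hookrightarrow L^{p_2}$. Applying (P) to $T_g f$ with $\gamma=\alpha$ bounds it by $\|g\|_{L^{p_2}}\|f\|_{\mcB^\alpha_{p_1,q}}\lesssim \|f\|_{\mcB^\alpha_{p_1,q}}\|g\|_{\mcB^\beta_{p_2,q}}$; applying (P) to $T_f g$ with the corresponding exponent and using the embedding on the $g$-factor gives the analogous bound; applying (R) gives $\|R(f,g)\|_{\mcB^{\alpha+\beta}_{p,q}}\lesssim \|f\|_{\mcB^\alpha_{p_1,q}}\|g\|_{\mcB^\beta_{p_2,q}}$, which embeds into $\mcB^\alpha_{p,q}$ thanks to $\beta>0$. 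The second inequality in \eqref{eq:BesovProduct} is then immediate from Theorem~\ref{th:BesovEmbedding} applied separately to each factor, via $\mcB^{\alpha+(1-\nu)d/p}_{p,q}\hookrightarrow \mcB^\alpha_{p/\nu,q}$ and its analogue for the $g$-factor.

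Finally, \eqref{eq:BesovPositiveProductSplit} is obtained from the same Bony decomposition: applying (P) with $\gamma=\alpha$ and the pair $(p_1,p_2)$ to $T_f g$, and with the pair $(p_4,p_3)$ to $T_g f$, produces the two summands of the right-hand side directly. For the resonance, inserting the uniform bound $\|\Delta_i g\|_{L^{p_4}}\lesssim \|g\|_{L^{p_4}}$ into the proof of (R) replaces the weight $2^{\beta i}$ on $g$ by $1$, so the convolution kernel becomes $\{2^{\alpha(\ell-k)}\indic_{k\geq\ell-C}\}_k$, which is summable because $\alpha>0$; this yields $\|R(f,g)\|_{\mcB^\alpha_{p,q}}\lesssim \|f\|_{\mcB^\alpha_{p_3,q}}\|g\|_{L^{p_4}}$ and fits into the second summand of \eqref{eq:BesovPositiveProductSplit}. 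The main technical point throughout is the $\ell^1$-summability of the discrete convolution kernel underlying (R) (and its variant for the resonance in \eqref{eq:BesovPositiveProductSplit}), which is precisely where the positivity hypotheses $\alpha+\beta>0$ and $\alpha>0$ enter.
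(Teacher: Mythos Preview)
The paper does not give its own proof of this theorem: it is stated in Appendix~\ref{app:HolderBesovSpaces} purely as a citation to \cite[Prop.~A.7]{mourrat_weber_17_DFI}, so there is no in-paper argument to compare against. Your Bony paraproduct approach is exactly the standard route used in the cited reference (and in \cite[Ch.~2]{bahouri_chemin_danchin_11}), and your treatment of the paraproduct bound~(P), the resonance bound~(R), and their combination with the Besov embeddings of Theorem~\ref{th:BesovEmbedding} is correct.

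One small remark on your handling of $T_f g$ in the derivation of \eqref{eq:BesovProduct}: applying (P) with $\gamma=\alpha$ gives $\|T_f g\|_{\mcB^\alpha_{p,q}}\lesssim \|f\|_{L^{p_1}}\|g\|_{\mcB^\alpha_{p_2,q}}$, and to absorb the $g$-factor into $\|g\|_{\mcB^\beta_{p_2,q}}$ you implicitly use $\mcB^\beta_{p_2,q}\hookrightarrow \mcB^\alpha_{p_2,q}$, which requires $\beta\geq\alpha$. If instead $\beta<\alpha$, one applies (P) with $\gamma=\beta$ and then embeds $\mcB^\beta_{p,q}\hookrightarrow\mcB^\alpha_{p,q}$ is false; rather one should note that the product can only retain regularity $\min(\alpha,\beta)$, so the statement as written is really the case $\alpha\leq\beta$ (which is how it is used throughout the paper, e.g.\ with $\alpha=\beta$ in Lemma~\ref{lem:PsiWellDefined}). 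Your argument is correct under that reading.
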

\subsection{Parabolic and Elliptic Regularity Estimates}
We define the action of the heat semi-group on $f \in L^1(\mbT)$ by,
\begin{equation*}
e^{t\Delta}f := \mcF^{-1}\left(e^{-4\pi^2|\,\cdot\,|^2t} \hat{f}_{\,\cdot\,}\right) = \mcH_t\ast f, \quad t>0,
\end{equation*}
where $\mcF^{-1}$ denotes the inverse Fourier transform on $L^1(\mbT)$ and
\begin{equation}\label{eq:PeriodicHeatKernel}
\mathcal{H}_t(x) := \frac{1}{\sqrt{4\pi t}}\sum_{n\in \mbZ} e^{-\frac{|x-n|^2}{4t}} \indic_{(0,\infty)}(t)= \sum_{m\in \mbZ} e^{-4\pi^2|m|^2t}e_{m}(x)\indic_{(0,\infty)}(t),
\end{equation}
 defines the heat kernel on $(0,\infty)\times \mbT$. We refer to \cite[Prop.~5 \& 6]{mourrat_weber_17_GWP} for a proof of the following theorem.
\begin{theorem}[{Regularising Effect of the Heat Flow - \cite[Prop.~5 \& 6]{mourrat_weber_17_GWP}}]\label{th:BesovHeatFlow}
Let $\alpha,\,\beta \in \mbR$, $p\geq r \in [1,\infty]$ and $q\in [1,\infty]$. Then, if $\beta\leq \alpha\leq \beta+2$, there exists a constant $C:=C(\alpha,\beta,p,r,q)>0$, such that, uniformly over $t>0$,
\begin{equation}\label{eq:HeatFlow}
\|e^{t\Delta}f\|_{\mcB^\alpha_{p,q}} \leq C t^{-\frac{1}{2}\left(\frac{1}{r} -\frac{1}{p} \right)-\frac{1}{2}\left(\alpha - \beta\right)}\|f\|_{\mcB^{\beta}_{r,q}}.
\end{equation}
Secondly, if $0\leq \beta-\alpha\leq 2$ then there exists a $C:=C(\alpha,\beta,p,q)>0$ such that for a any $t>0$,
\begin{equation}\label{eq:HeatFlowMinusId}
\|(1-e^{t\Delta}) f\|_{\mcB^{\alpha}_{p,q}} \leq C t^{\frac{\beta-\alpha}{2}} \|f\|_{\mcB^{\beta}_{p,q}}.
\end{equation}
\end{theorem}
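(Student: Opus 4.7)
The plan is to pass to the Littlewood--Paley characterization of the Besov norm on $\mbT$ and establish both bounds blockwise before reassembling in $\ell^{q}$. This reduces matters to two classical ingredients applied to each dyadic frequency annulus: the periodic Bernstein inequality (to shift the integrability exponent from $r$ to $p$), and a sharp pointwise bound on the Fourier multipliers $m\mapsto e^{-4\pi^{2}|m|^{2}t}$ and $m\mapsto 1-e^{-4\pi^{2}|m|^{2}t}$ restricted to the annulus. Since $e^{t\Delta}$ commutes with each $\Delta_{k}$, we have $\Delta_{k}e^{t\Delta}f = e^{t\Delta}\Delta_{k}f$, and $\Delta_{k}f$ is frequency-localized in $\{|m|\sim 2^{k}\}$ for $k\ge 0$, and in $\{|m|\le 2\}$ for $k=-1$.

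For \eqref{eq:HeatFlow}, I would first prove the annulus-level estimate
\[
\|e^{t\Delta}\Delta_{k}f\|_{L^{p}} \lesssim e^{-ct 2^{2k}}\|\Delta_{k}f\|_{L^{p}} \qquad (k\ge 0),
\]
by observing that the convolution kernel of $e^{t\Delta}$ against $h_{k}$ is, up to the exponential factor $e^{-ct 2^{2k}}$, a smooth Schwartz-like kernel whose rescaling has uniformly bounded $L^{1}$-norm. Combined with the Bernstein inequality $\|\Delta_{k}f\|_{L^{p}}\lesssim 2^{k(1/r-1/p)}\|\Delta_{k}f\|_{L^{r}}$ (valid for $r\le p$, $d=1$), multiplying by $2^{k\alpha}$ yields
\[
2^{k\alpha}\|\Delta_{k}e^{t\Delta}f\|_{L^{p}} \lesssim 2^{ks}\,e^{-ct 2^{2k}}\cdot 2^{k\beta}\|\Delta_{k}f\|_{L^{r}},
\qquad s:=(\alpha-\beta)+\tfrac{1}{r}-\tfrac{1}{p}\ge 0.
\]
Elementary calculus gives $\sup_{x>0} x^{s}e^{-ctx^{2}} \lesssim t^{-s/2}$, so $\sup_{k\ge 0} 2^{ks}e^{-ct 2^{2k}} \lesssim t^{-s/2}$. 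The low-frequency block $k=-1$ is handled separately by contractivity of $e^{t\Delta}$ on $L^{p}$ together with a finite-dimensional Bernstein step, and is absorbed into the right-hand side. Taking $\ell^{q}(\mbZ_{\ge -1})$-norms delivers \eqref{eq:HeatFlow}.

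For \eqref{eq:HeatFlowMinusId}, the same reduction applies to the multiplier $1-e^{-4\pi^{2}|m|^{2}t}$. The elementary inequality $1-e^{-x}\le x^{\gamma}$ for all $x\ge 0$ and $\gamma\in[0,1]$, applied with $\gamma=(\beta-\alpha)/2\in[0,1]$ by hypothesis, yields
\[
\|\Delta_{k}(1-e^{t\Delta})f\|_{L^{p}} \lesssim (t\,2^{2k})^{(\beta-\alpha)/2}\|\Delta_{k}f\|_{L^{p}} = t^{(\beta-\alpha)/2}\,2^{k(\beta-\alpha)}\|\Delta_{k}f\|_{L^{p}}.
\]
Multiplying by $2^{k\alpha}$ and taking $\ell^{q}$-norms produces the stated bound. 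The only genuinely non-algebraic step in the whole argument, and thus the main obstacle, is the annular multiplier estimate for $e^{t\Delta}$: on $\mbR^{d}$ this is textbook, but on $\mbT$ one must verify that the rescaled convolution kernel associated to $e^{t\Delta}h_{k}$ has uniformly bounded $L^{1}$-mass independent of $k\ge 0$ and $t>0$, which relies on the fact that $h_{k}$ for $k\ge 0$ is a dyadic rescaling of a fixed Schwartz profile (modulo the exceptional low-frequency kernel $h_{-1}$, which is treated separately). This is precisely the content executed in \cite[Prop.~5--6]{mourrat_weber_17_GWP}, which we cite.
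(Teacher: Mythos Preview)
The paper does not give its own proof of this theorem; it simply cites \cite[Prop.~5 \& 6]{mourrat_weber_17_GWP}. Your sketch is precisely the standard Littlewood--Paley argument carried out in that reference --- commute $e^{t\Delta}$ with $\Delta_k$, apply the annular multiplier bound plus Bernstein, and optimize $2^{ks}e^{-ct2^{2k}}$ over $k$ (and likewise $1-e^{-x}\le x^\gamma$ for the second estimate) --- and you correctly isolate the one genuinely nontrivial step (the uniform $L^1$ control of the rescaled annular heat kernel on $\mbT$) and defer it to the same citation.
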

\begin{remark}\label{rem:HeatSemiGroupContAtZero}
Since we have defined the H{\"o}lder-Besov spaces as the closure of $C^\infty(\mbT)$ under the appropriate norm, for any $\alpha \in \mbR$, $p,\,q \in [1,\infty]$, if $f\in \mcB^{\alpha}_{p,q}(\mbT)$, we have that
$$
\lim_{t\rightarrow 0}\|(1-e^{t\Delta})f\|_{\mcB^{\alpha}_{p,q}} = 0.
$$
\end{remark}
The equivalent elliptic regularity estimate is an easy consequence of \cite[Lem. 2.2]{bahouri_chemin_danchin_11} concerning Fourier multipliers.
\begin{theorem}\label{th:BesovElliptic}
Let $f\in C^\infty(\mbT)$ be such that $\langle f,1\rangle =0$.
Suppose $-\partial_{xx} \rho = f$ and $\langle \rho,1\rangle =0$. Then for any $\alpha\in \mbR$, $p,q\in [1,\infty]$, there exists a constant $C\geq 0$ such that
\begin{equation}\label{eq:BesovElliptic}
\|\rho\|_{\mcB^{\alpha}_{p,q}} \leq C \|f\|_{\mcB^{\alpha-2}_{p,q}}.
\end{equation}
\end{theorem}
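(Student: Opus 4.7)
The plan is to write $(-\partial_{xx})^{-1}$ as a Fourier multiplier on the mean-free subspace and estimate each Littlewood--Paley block separately via a dyadic scaling argument. Since $\langle f,1\rangle = \langle \rho,1\rangle = 0$, expanding in the Fourier basis $e_m(x) = e^{i 2\pi m x}$ gives $\hat\rho(m) = \hat f(m)/(4\pi^2 m^2)$ for $m\neq 0$ and $\hat\rho(0)=0$. In particular, as each $\Delta_k$ is itself a Fourier multiplier, $(-\partial_{xx})^{-1}$ commutes with $\Delta_k$ on mean-free functions, so $\Delta_k\rho = (-\partial_{xx})^{-1}\Delta_k f$.

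For $k\geq 0$, the Fourier support of $\mcF h_k$ is contained in the annulus $\{2^k \leq |\xi| \leq 9\cdot 2^k\}$, which avoids $\xi=0$. Pick a Schwartz function $\tilde\phi\in C^\infty_c(\mbR)$ that equals $1$ on $\{1\leq |\xi|\leq 9\}$ and vanishes near $0$, and set
\[
M(\eta) := \tilde\phi(\eta)/(4\pi^2|\eta|^2),\qquad M_k(\xi) := \tilde\phi(2^{-k}\xi)/(4\pi^2|\xi|^2) = 2^{-2k}M(2^{-k}\xi).
\]
Then $M$ is Schwartz, and by construction $\Delta_k\rho = M_k(D)\Delta_k f$ on the torus.

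Next I would invoke \cite[Lem.~2.2]{bahouri_chemin_danchin_11} (Fourier multiplier estimate): since $M$ is Schwartz on $\mbR$, $\mcF_{\mbR}^{-1}M \in L^1(\mbR)$, and by Poisson summation the periodic convolution kernel associated with the multiplier $M(2^{-k}\,\cdot\,)$ on $\mbT$ has $L^1(\mbT)$-norm bounded uniformly in $k\geq 0$. Young's inequality then yields $\|M_k(D)g\|_{L^p(\mbT)}\lesssim 2^{-2k}\|g\|_{L^p(\mbT)}$ uniformly in $k\geq 0$ and $p\in[1,\infty]$, so applying this with $g=\Delta_k f$ gives $\|\Delta_k\rho\|_{L^p}\lesssim 2^{-2k}\|\Delta_k f\|_{L^p}$.

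For $k=-1$, $\mcF h_{-1}$ is supported in $B(0,2)$, and the mean-free condition ensures $\Delta_{-1}f$ and $\Delta_{-1}\rho$ only carry frequencies in $\{\pm 1,\pm 2\}$. The map $\Delta_{-1}f\mapsto\Delta_{-1}\rho$ is then a bounded linear operator on a fixed finite-dimensional subspace, so $\|\Delta_{-1}\rho\|_{L^p}\lesssim\|\Delta_{-1}f\|_{L^p}$. Combining both ranges of $k$ and taking the $2^{\alpha k}\ell^q$-norm,
\[
\|\rho\|_{\mcB^\alpha_{p,q}} = \bigl\|(2^{\alpha k}\|\Delta_k\rho\|_{L^p})_{k\geq-1}\bigr\|_{\ell^q(\mbZ)} \lesssim \bigl\|(2^{(\alpha-2)k}\|\Delta_k f\|_{L^p})_{k\geq-1}\bigr\|_{\ell^q(\mbZ)} = \|f\|_{\mcB^{\alpha-2}_{p,q}},
\]
which is the claimed bound. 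The only non-routine step is the low-frequency block $k=-1$, where the singularity of $1/(4\pi^2|\xi|^2)$ at $\xi=0$ must be handled using the mean-free hypothesis on $f$; the remaining work is a standard dyadic rescaling argument.
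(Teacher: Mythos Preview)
Your proposal is correct and follows essentially the same route as the paper: both invoke \cite[Lem.~2.2]{bahouri_chemin_danchin_11} to obtain $\|\Delta_k\rho\|_{L^p}\lesssim 2^{-2k}\|\Delta_k f\|_{L^p}$ for $k\geq 0$, and both treat the block $k=-1$ separately using the mean-free hypothesis. You spell out the multiplier construction and the $L^1$-kernel bound more explicitly than the paper, and your handling of the low-frequency block via finite-dimensionality is in fact cleaner than the paper's terse one-line justification.
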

\begin{proof}
From \cite[Lem. 2.2]{bahouri_chemin_danchin_11}, there exists a $C>0$ such that for any $k\geq 0$ we have $\|\Delta_k \rho\|_{L^p}  \leq C2^{-2k}\|\Delta_kf\|_{L^p}$.
Since $(\mcF f)(0)=(\mcF\rho)(0)=0$, $\Delta_{-1} \rho(\zeta) = \Delta_{-1}f(\zeta)$. Therefore, there exists a dimension dependent constant $C>0$, such that for any $k\geq -1$,
\begin{equation*}
2^{\alpha k} \|\Delta_k\rho\|_{L^p} \leq C 2^{(\alpha-2)k}\|\Delta_k f \|_{L^p},
\end{equation*}
from which the claim follows.
\end{proof}
\begin{remark}
Applying Theorem \ref{th:BesovElliptic} followed by Theorem \ref{th:BesovDerivative} gives us the estimate, for $f,\rho, \alpha,p,q$ as in Theorem~\ref{th:BesovElliptic} and $r\in [1,p]$,
\begin{equation}\label{eq:BesovGradElliptic}
\|\partial_x \rho \|_{\mcB^{\alpha}_{p,q}} \lesssim \|f\|_{\mcB^{\alpha-1+\left(\frac{1}{r}-\frac{1}{p}\right)}_{r,q}}.
\end{equation}
\end{remark}
\section{Regularity of the Stochastic Heat Equation}\label{app:SHERegularity}
We outline the main steps necessary to prove Theorem \ref{th:MarkovSHESpaceTimeRegular} using the characterisation of H{\"o}lder--Besov spaces. We note that a similar result can be proved using a version of the Garsia--Rodemich--Rumsey lemma, \cite{hu_le_13}. A more detailed presentation of similar results in our context can be found in \cite{mourrat_weber_17_GWP,tsatsoulis_weber_18}. We include a proof directly in our setting for the reader's convenience. A central tool is the following Kolmogorov regularity result which can be found in a similar form as \cite[Lem. 9 \& 10]{mourrat_weber_17_GWP}.
We recall the kernels $h_k \in C^\infty(\mbT)$ from the beginning of Appendix~\ref{app:HolderBesovSpaces}.
\begin{lemma}[{Kolmogorov Regularity - \cite[Lem. 9 \& 10]{mourrat_weber_17_GWP}}]\label{lem:FourierKolmogorov}
Let $(t,\phi) \mapsto Z_t(\phi)$ be a map from $\mbR_+ \times L^2(\mbT)\rightarrow L^2(\Omega,\mcF,\mbP)$ that is linear and continuous in $\phi$ and such that $Z_0=0$.
Assume that for some $p>1$, $\alpha'\in \mbR$, $\kappa'>\frac{1}{p}$ and $T>0$,
there exists a constant $C>0$, such that for all $k\geq -1$, $x\in \mbT$ and $s,\,t \in [0,T]$,
\begin{equation}\label{eq:KolmCondition2}
\mbE\left[ |Z_t(h_k(\,\cdot\,-x)) - Z_s(h_{k}(\,\cdot\,-x))|^p \right]\leq C^p |t-s|^{\kappa' p} 2^{-k\alpha'p}.
\end{equation}
Then, for all $\alpha<\tilde{\alpha}'-\frac{1}{p}$ and $ \kappa \in[0,\kappa'-\frac{1}{p})$,
there exists a modification $\tilde{Z}$ of $Z$ such that
\begin{equation*}
\mbE\left[  \|\tilde{Z}\|^p_{\mcC^\kappa_T\mcB^{\alpha-\kappa}_{p,p}} \right] < \infty .
\end{equation*}
\end{lemma}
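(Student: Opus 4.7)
The plan is to recast the pointwise moment condition~\eqref{eq:KolmCondition2} as a Kolmogorov--Chentsov estimate for $Z$ viewed as a stochastic process with values in a suitable separable Besov space, and then invoke the classical continuity theorem for Banach-valued processes.

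The first step is to identify $(\Delta_k Z_t)(x)$ with $Z_t(h_k(\cdot - x))$ in the distributional pairing sense, which is immediate from $\Delta_k f = h_k * f$ together with the symmetry of the Littlewood--Paley kernels (their Fourier multipliers are real and radial, forcing each $h_k$ to be even). Combined with Fubini and the hypothesis, this yields
\begin{equation*}
    \mbE\|\Delta_k(Z_t - Z_s)\|^p_{L^p(\mbT)} = \int_{\mbT} \mbE|Z_t(h_k(\cdot-x)) - Z_s(h_k(\cdot-x))|^p \, \dd x \leq C^p |t-s|^{\kappa' p} 2^{-k\alpha' p}.
\end{equation*}
Weighting by $2^{\beta k p}$ and summing over $k \geq -1$ produces a convergent geometric series whenever $\beta < \alpha'$, which gives the Banach-valued Kolmogorov bound
\begin{equation*}
    \mbE\|Z_t - Z_s\|^p_{\mcB^{\beta}_{p,p}(\mbT)} \leq \tilde C^p |t-s|^{\kappa' p}.
\end{equation*}

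Next, I would apply the classical Kolmogorov--Chentsov continuity theorem in the separable Banach space $\mcB^{\beta}_{p,p}(\mbT)$: since $\kappa' p > 1$, for any $\kappa \in [0, \kappa' - 1/p)$ there exists a modification $\tilde Z$ of $Z$, with continuous trajectories in $\mcB^{\beta}_{p,p}$, satisfying $\mbE\|\tilde Z\|^p_{\mcC^{\kappa}_T \mcB^{\beta}_{p,p}} < \infty$. Taking $\beta = \alpha - \kappa$, the admissibility condition $\beta < \alpha'$ reduces to $\alpha - \kappa < \alpha'$, which follows from $\alpha < \alpha' - 1/p$ and $\kappa \geq 0$; this yields the stated conclusion.

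I do not expect any serious obstacle here: the only conceptual input is the identification of the Littlewood--Paley block with the linear pairing against the shifted kernel, and the rest is a routine geometric sum plus an off-the-shelf Kolmogorov criterion in a Banach space. If one wants a single modification $\tilde Z$ serving the whole admissible range of $(\alpha,\kappa)$ simultaneously, the standard workaround is to build modifications along a countable dense parameter set and merge the null sets, using that any two modifications of $Z$ must coincide almost surely at each fixed time.
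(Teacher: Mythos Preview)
Your proposal is correct and is exactly the standard argument behind \cite[Lem.~9 \& 10]{mourrat_weber_17_GWP}: identify $(\Delta_k Z_t)(x)=Z_t(h_k(\cdot-x))$, integrate in $x$ and sum over $k$ to obtain $\mbE\|Z_t-Z_s\|_{\mcB^\beta_{p,p}}^p\lesssim |t-s|^{\kappa' p}$ for $\beta<\alpha'$, then apply the Banach-valued Kolmogorov criterion. The paper itself does not spell this out; its proof consists of the single sentence that one should adapt \cite[Lem.~9 \& 10]{mourrat_weber_17_GWP} to the periodic one-dimensional setting, which is precisely what you have done.

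One minor remark: your argument actually yields the slightly sharper conclusion $\tilde Z\in\mcC^\kappa_T\mcB^\beta_{p,p}$ for any $\beta<\alpha'$ and $\kappa<\kappa'-\tfrac1p$ independently, whereas the stated lemma couples the exponents via $\beta=\alpha-\kappa$ with $\alpha<\alpha'-\tfrac1p$; the latter is of course contained in the former.
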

\begin{proof}
It suffices to modify the proof of \cite[Lem. 9 \& 10]{mourrat_weber_17_GWP} to our finite volume and one dimensional setting.
\end{proof}
\begin{proof}[Proof of Theorem \ref{th:MarkovSHESpaceTimeRegular}]
Below we show that $\mcL(\mcI_{t_0,t})$ depends only on $|t-t_0|$ so without loss of generality we set $t_0=0$. It is also clear, by translation invariance of $\mcH$ in space and the law of the white noise, that $\mcI_{t_0,t}$ is translation invariant, i.e $\mcI_{t_0,t}(\phi(\,\cdot\,+x)) \sim \mcI_{t_0,t}(\phi(\,\cdot\,))$ for any $\phi \in C^\infty(\mbT)$ and $x\in \mbT$.

For $k\geq -1$, $t \in (0,T]$ and setting $x=0$ in \eqref{eq:KolmCondition2}, we have $v_{0,t}(h_k)=\mcI_{0,t}(h_k)$. So splitting the increment using the semi-group property, Parseval's theorem, the covariance formula for the space-time white noise and the action of the heat kernel in Fourier space we have
for all $0\leq s < t \leq T$
\begin{align*}
	\mbE\left[|\mcI_{0,t}(h_k)-\mcI_{0,s}(h_k)|^2 \right]
	&\lesssim \sum_{\substack{m \in \supp(\mcF h_k)\\m\neq 0}} \frac{1}{8\pi^2|m|^2}(1-e^{-(t-s)8\pi^2 |m|^2})
	\\
	&\lesssim\sum_{\substack{m \in \supp(\mcF h_k)\\m\neq 0}} \frac{|t-s|^\gamma }{(8\pi^2|m|^2)^{1-\gamma}}\\
	&\lesssim
	|t-s|^\gamma 2^{-k(1-2\gamma)},
\end{align*}
where $\mcF h_k$ denotes the Fourier transform of $h_k$ and we may choose any $\gamma \in [0,1]$. So then using Nelson's estimate \cite[Sec. 1.4.3]{nualart_10}, for any $p\geq 2$, there exists a constant $C:=C(p,T)>0$ such that for all $k\geq -1$ and $\gamma \in [0,1)$,
\begin{equation*}
	\mbE\left[|\mcI_{0,t}(h_k)-\mcI_{0,s}(h_k)|^p \right] \leq C |t-s|^{\frac{\gamma}{2}p} 2^{-k\left(\frac{1}{2}-\gamma\right)p}.
\end{equation*}
Then applying Theorem \ref{lem:FourierKolmogorov} and the Besov embedding \eqref{eq:BesovEmbedding}, we see that there exists a modification of $t\mapsto v_{0,t}$ (which we do not relabel) such that for any $p\geq 1$, $\alpha\in (0,1/2),\,\kappa \in \left[0,1/2\right)$ and $T>0$,
\begin{equation*}
    \mbE\left[\|v_{0,\,\cdot\,}\|^p_{\mcC^{\kappa}_T \mcC^{\alpha-2\kappa} }\right] < \infty.
\end{equation*}
To see stationarity of the processes $t_0\mapsto v_{t_0,t_0+h}$ for fixed $h>0$, we use the fact that $v_{t_0,t_0+h}(\phi)$ is Gaussian, and has zero mean, so that its law is entirely determined by its second moment. Therefore, letting $\phi \in C^\infty(\mbT)$, and by similar computations as above, we see that
\begin{align*}
\mbE\left[v_{t_0,t_0+h}(\phi)^2 \right]
&= \sum_m \frac{1}{8\pi^2|m|^2}\left( 1- e^{-8\pi^2|m|^2h}\right)|\hat{\phi}_m|^2,
\end{align*}
and hence $\mcL(v_{t_0,t_0+h})$ depends only on $h\in [t_0,T-t_0]$. Finally, by the assumption that $\xi$ is spatially mean free, for any $t_0\in [0,T)$ and $h\in (T-t_0,T]$, $\mcL(v_{t_0,t_0+h}) \in C^\alpha_0(\mbT)$.
\end{proof}

\bibliographystyle{amsplain}

\end{document}